\renewcommand{\a}{ {\bf a}}
\newcommand{\e}{ {\bf e}}
\newtheorem{theorem}{Theorem}[section]
\newtheorem{corollary}[theorem]{Corollary}
\newtheorem{lemma}[theorem]{Lemma}
\newtheorem{proposition}[theorem]{Proposition}
\newtheorem{definition-proposition}[theorem]{Definition-Proposition}
\theoremstyle{definition}
\newtheorem{definition}[theorem]{Definition}
\newtheorem{assumption}[theorem]{Assumption}
\newtheorem{remark}[theorem]{Remark}
\newtheorem{example}[theorem]{Example}
\newtheorem{observation}[theorem]{Observation}
\newcommand{\Liota}{\iota_\lambda}
\newcommand{\Riota}{\iota_\rho}
\newcommand{\Lpi}{\pi_\lambda}
\newcommand{\Rpi}{\pi_\rho}
\newcommand{\Ldelta}{\delta_\lambda}
\newcommand{\LDelta}{\Delta_\lambda}
\newcommand{\RDelta}{\Delta_\rho}
\newcommand{\Ares}[1]{\mathcal{A}_{#1}}
\newcommand{\id}{\mathsf {id}}
\newcommand{\AAA}{\A[\eta_1^{\frac{1}{p_1}},\dots,\eta_n^{\frac{1}{p_n}}]}
\newcommand{\grid}{S}
\renewcommand{\O}{{\mathcal O}}
\newcommand{\F}{\mathcal{F}}
\newcommand{\E}{\mathcal{E}}
\newcommand{\A}{\mathcal{A}}
\renewcommand{\L}{\mathbf{L}}
\newcommand{\Z}{\mathbb{Z}}
\renewcommand{\H}{\mathbb{H}}
\newcommand{\matt}{\begin{bmatrix}R&I&I\\R&R&I\\R&R&R\end{bmatrix}}
\newcommand{\mat}{\begin{bmatrix}k&0\\k&k\end{bmatrix}}
\newcommand{\Ext}{\operatorname{Ext}\nolimits}
\newcommand{\Hom}{\operatorname{Hom}\nolimits}
\newcommand{\End}{\operatorname{End}\nolimits}
\newcommand{\RGamma}{\mathbf{R}\Gamma}
\newcommand{\RHOM}{\mathbf{R}\mathcal Hom}
\newcommand{\RHom}{\mathbf{R}\strut\kern-.2em\operatorname{Hom}\nolimits}
\newcommand{\cok}{\operatorname{cok}\nolimits}
\renewcommand{\P}{\mathbb{P}}
\newcommand{\Ae}{\frac{A}{\langle e\rangle}}
\newcommand{\DDD}{\mathsf{D}}
\newcommand{\bo}{\operatorname{b}\nolimits}
\DeclareMathOperator{\thick}{\mathsf{thick}}
\DeclareMathOperator{\Pic}{\mathsf{Pic}}
\DeclareMathOperator{\coh}{\mathsf{coh}}
\DeclareMathOperator{\moduleCategory}{\mathsf{mod}} \renewcommand{\mod}{\moduleCategory}
\DeclareMathOperator{\gldim}{gldim}
\DeclareMathOperator{\injdim}{inj.dim}
\begin{document}
\title{A recollement approach to Geigle-Lenzing weighted projective varieties}

\author[Lerner]{Boris Lerner}
\address{B. Lerner: School of Mathematics and Statistics, UNSW, Sydney, 2052, Australia.}
\email{boris@unsw.edu.au}
\urladdr{http://www.unsw.edu.au/~borislerner}

\author[Oppermann]{Steffen Oppermann}
\address{S. Oppermann: Institutt for matematiske fag, NTNU, 7491 Trondheim, Norway}
\email{steffen.oppermann@math.ntnu.no}
\urladdr{http://www.math.ntnu.no/~opperman/}
\thanks{Most of the work on this paper was done while the second author visited Nagoya. He would
like to thank Osamu Iyama and his group for their hospitality. The first author thanks Daniel Chan
for his valuable comments and suggestions. Both authors are deeply in dept to
Osamu Iyama for providing many ideas, suggestions,  and comments to this project.}

\thanks{B.~L.\ was partially supported by JSPS postdoctoral fellowship program. S.~O.\ was supported by NRF grant 221893.}


\begin{abstract}
We introduce a new method for expanding an abelian category and study it using recollements. In
particular, we give a criterion for the 
existence of cotilting objects. We show, using techniques from noncommutative algebraic geometry, that our construction encompasses the category of
coherent sheaves on Geigle-Lenzing weighted projective lines. We apply our construction to some concrete examples and obtain new
weighted projective varieties and analyse the endomorphism algebras of their tilting bundles.

\end{abstract} 

\maketitle
\section{Introduction}\label{sec:introduction}

In their famous paper \cite{GL} Geigle and Lenzing introduced an important class of abelian categories with a tilting object (see Definition
\ref{def:tilting}) which have subsequently been called
\emph{coherent sheaves on Geigle-Lenzing (GL) weighted projective lines}. This category has played an important role in many fields, in
particular representation theory of finite dimensional algebras. It was recently generalised in \cite{HIMO} to include higher
dimensional projective spaces. A different interpretation of these categories was discovered in \cite{CI, RVdB} for the dimension $1$ case
and more generally in \cite{IL}, where these categories are shown to be equivalent to module categories $\mod A$ of a certain order $A$ on $\P^d$, which we call a
\emph{GL order} (see below).
Viewing GL weighted projective spaces as module categories allows for further,
very fruitful,
generalisations which is what we explore in this paper. The
idea is rather simple: in \cite{IL} all GL orders that were considered were always sheaves on $\P^d$, 
now we allow the centre to be other varieties:

\begin{definition}\label{def:order}
	Fix a scheme $X$ over a field $k$
	and for $i=1,\dots,n$ fix prime divisors $L_i$ on $X$ and integer
	weights $p_i\ge 2$. A \emph{GL order $A$}
	with \emph{centre} $X$ associated to this data
	is a sheaf of noncommutative algebras of the form \[A=\bigotimes_{i=1}^nA_i,
		\ \mbox{ where }\ \quad A_i:=
		\begin{bmatrix}
			\O&\O(-L_i)&\dots&\O(-L_i)&\O(-L_i)\\
			\O&\O&\dots&\O(-L_i)&\O(-L_i)\\
			\vdots&\vdots&\ddots&\vdots&\vdots\\
			\O&\O&\dots&\O&\O(-L_i)\\
			\O&\O&\dots&\O&\O
		\end{bmatrix}\subset M_{p_i}(\O)
	\] and $\O=\O_X$.
\end{definition}

The aim of this paper is to study the category $\mod A$ of GL orders $A$,
in particular, we give a criterion on the existence of tilting sheaves.
First we give a description of $\mod A$ in terms of \emph{grid categories}
$\AAA$, which are constructed from an abelian category $\A$ with endofunctors
$F_i\colon\A\to\A$ and natural transformations $\eta_i\colon F_i\to\id_{\A}$ 
for $i=1,\ldots,n$.
Moreover we give a sufficient condition for $\AAA$ to have a tilting object.
Then we apply these results to GL orders and obtain the following: for each subset
$I\subseteq \{1,\dots,n\}$ we denote by \[\O_I=\bigotimes_{i\in I}\O_{L_i}.\] 
\begin{theorem}[Theorem \ref{Thm:order}]Let $A$ be a GL order on a smooth projective variety over an
	algebraically closed field $k$ and suppose $\sum L_i$ is a simple normal crossings divisor.
	Assume there is a collection of
	tilting objects $T_I\in \mod \O_I$  for all $I\subseteq \{1,\dots,n\}$, such that
\begin{itemize}
\item $T_I\otimes \O_J\otimes \O(-L_j)\to T_I\otimes \O_J$ is injective whenever $I$, $J$, and $\{j\}$ are pairwise disjoint;
\item $\Ext^i_{\O_{J}}(T_I \otimes \O_J, T_{I \cup J})=0$ for all $i>0$, whenever $I  \cap J = \emptyset$.
\end{itemize}
then 
	\[
		\bigoplus_{I\subseteq \{1,\dots,n\}}\left(\bigotimes_{i\not\in I}
		A_if_i	
		\otimes
		\bigotimes_{i\in I}
		\frac{A_i}{\langle e_i\rangle}
		\otimes T_I\right)
	\] is tilting in $\mod A$, where $e_i$ and $f_i$ are matrices of size $p_i\times p_i$
	with $1$ in the bottom right (respectively top left) position, and
	$0$'s elsewhere.
\end{theorem}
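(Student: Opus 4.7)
The plan is to reduce this to the general criterion for tilting objects in grid categories $\AAA$ developed earlier in the paper. First I would realise $\mod A$ as such a grid category with base $\A = \coh X$, endofunctors $F_i = - \otimes_{\O_X} \O(-L_i)$, and natural transformations $\eta_i \colon F_i \to \id$ given by the canonical inclusions $\O(-L_i) \hookrightarrow \O$. Because $A = \bigotimes_i A_i$ as a sheaf of algebras, iterating the grid construction once per divisor $L_i$ should recover $\mod A$; this identification presumably appears in the earlier sections for the case $X = \P^d$ and extends formally to arbitrary centres, since the construction of $A_i$ never used anything about $\P^d$ beyond the existence of $\O$ and $\O(-L_i)$.

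Next I would interpret each summand of the proposed tilting object. The idempotents $e_i$ and $f_i$ pick out the two extremes of the grid in the $i$-th direction: $A_i f_i$ is a projective "boundary" factor for directions $i \notin I$, while $A_i/\gen{e_i}$ is supported set-theoretically on $L_i$, reflecting $0 \to \O(-L_i) \to \O \to \O_{L_i} \to 0$. Consequently, under the simple normal crossings assumption, the $I$-indexed summand is naturally a module over a GL order on the transverse intersection with structure sheaf $\O_I = \bigotimes_{i \in I} \O_{L_i}$, and tensoring by $T_I$ inserts the correct local data in the $I$-stratum of the grid. This matches the shape of the general tilting criterion, whose input is precisely a choice of tilting object on each stratum together with compatibility data.

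It then remains to check that the two hypotheses of the geometric theorem imply the abstract hypotheses of the grid-category criterion. The injectivity hypothesis on the natural transformations $\eta_j$, restricted to the objects actually appearing, unpacks to exactly the first bullet: for pairwise disjoint $I, J, \{j\}$ one needs $T_I \otimes \O_J \otimes \O(-L_j) \hookrightarrow T_I \otimes \O_J$. The Ext-vanishing hypothesis between strata, once one identifies the stratum category as $\mod \O_I$ and uses that $\O_I \otimes_{\O_X}^{\L} \O_J \simeq \O_{I \cup J}$ for disjoint $I, J$ by the SNC assumption, reduces by adjunction to $\Ext^i_{\O_J}(T_I \otimes \O_J, T_{I \cup J}) = 0$ for $i > 0$, which is the second bullet.

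The main obstacle will be the bookkeeping in this translation step: one has to check that the derived restriction and tensor operations between different strata really do collapse to the underived ones used in the statement, and that no spurious higher Tor terms appear between distinct $\O_{L_i}$'s. Smoothness of $X$ and the SNC hypothesis are exactly what is needed here, since together they ensure the $L_i$ are locally complete intersections meeting transversally, so $\O_{L_i}$ admits a Koszul resolution of length one and the Tor computation degenerates. Once this is verified, generation of the whole of $\DDD^{\bo}(\mod A)$ follows from generation of each $\DDD^{\bo}(\mod \O_I)$ by $T_I$, and the projective dimension is bounded using the standard filtration of $A$-modules by the idempotents $e_i$, $f_i$ combined with smoothness of $X$.
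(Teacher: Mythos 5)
Your overall strategy — realise $\mod A$ as the grid category $\A[\eta_1^{1/p_1},\ldots,\eta_n^{1/p_n}]$ with $\A=\coh X$, $F_i = -\otimes\O(-L_i)$, $\eta_i$ the canonical inclusions, and then translate the hypotheses to the abstract criterion from Section~4 — is exactly the paper's route. Your unpacking of the two bullet points is also correct: the first matches the ``$\eta_a(T_H|_J)$ is a monomorphism'' condition and the second matches the Ext-vanishing across strata condition of Theorem~\ref{thm:general}.

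However, there is a genuine gap: the abstract grid-category criterion that the paper proves (Theorem~\ref{thm:general}) produces \emph{cotilting} objects, not tilting objects, and the two hypotheses you correctly matched are precisely the cotilting-criterion hypotheses. A hypothetical ``tilting version'' built from $\Lpi^J$ instead of $\Rpi^J$ would have \emph{different} hypotheses (with the roles of kernels/cokernels and the direction of the Ext-vanishing switched), so it would not translate to the bullet points in the statement. The missing bridge is Serre duality on $\mod A$ (Proposition~\ref{prop:serre}): since $A$ has finite global dimension and $X$ is projective and smooth, one has a Serre functor $\omega_A\otimes_A-$ on $\DDD^{\bo}(\mod A)$, and this forces tilting and cotilting to coincide. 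Without invoking this your argument only delivers a cotilting object, not a tilting one. A secondary, smaller omission is that Assumption~\ref{generalassumption} (each $\A_I$ having enough objects on which the remaining $\eta_j$ are monomorphisms) must be verified in the geometric setting; in the coherent-sheaf case this follows from twisting by an ample line bundle and Serre vanishing, but it does need to be said.
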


We apply this result to several concrete projective varieties.
For instance, let $X=\P^1\times \P^1$ and  $L_i\sim(1,1)$ for
$i=1,2$. Suppose $L_1\cap L_2=p+q$. Consider \[A=
	\begin{bmatrix}
		\O&\O(-L_1)\\\O&\O
	\end{bmatrix}\otimes
	\begin{bmatrix}
		\O&\O(-L_2)\\\O&\O
	\end{bmatrix}
\]Then $T_\emptyset=\O\oplus \O(1,0)\oplus \O(0,1)\oplus
\O(1,1)$, $T_{i}=\O_{L_i}(1)\oplus\O_{L_i}(2)$ for
$i=1,2$ and $T_{1,2}=\O_{p}\oplus\O_q$ satisfies the
assumptions of the theorem. Hence a tilting object in
$\mod A$ is\[
	\left(A_1f_1\otimes A_2f_2\otimes
	T_{\emptyset}\right)\oplus
	\left(\frac{A_1}{\langle e_1\rangle}\otimes
	A_2f_2\otimes T_{1}\right)\oplus \left(A_1f_1\otimes
	\frac{A_2}{\langle e_2\rangle}\otimes T_{2}\right)\oplus
	\left(\frac{A_1}{\langle e_1\rangle}\otimes
	\frac{A_2}{\langle e_2\rangle }\otimes
	T_{1,2}\right)
\]

Our approach is rather general and categorical. In Section \ref{sec:setup} we begin with an abelian category $\A$ and an integer $n\ge 1$, we fix
endofunctors $F_i$, natural transformations $\eta_i$ and integer weights $p_i\geq 2$ and construct, a new category $\AAA$. 
In Section \ref{1weight} we analyse the case $n=1$ and, using recollements, give a criterion for this category to have a cotilting
object. Our emphasis on cotilting, as opposed to tiling, is because the cotilting criterion is easier to check in $\mod A$, than the
corresponding tilting criterion, which can also be easily derived using a similar approach to ours. However, due to the existence of Serre duality 
in $\mod A$, cotilting and tilting are actually equivalent and so this subtlety causes no issues in practice.
In Section \ref{sec:general} we analyse the situation for an arbitrary $n$. In Section
	\ref{sec:global} the global dimension of these categories is computed, showing that it often coincides with the global dimension of the original category. In Section~\ref{sec:order} we translate the categorical results to orders, to obtain the main result as stated above. Finally, in
Section \ref{sec:applications} we show how our results may be applied to concrete situations: to Hirzebruch surfaces and to projective
spaces. In the $\P^d$ case we show that the tilting bundle we obtain is in fact a generalisation of the \emph{squid} algebra.

\section{Setup and Notation}\label{sec:setup} 
Throughout $k$ denotes an algebraically closed field. Let $\A$
be a $k$-linear, $\Hom$-finite, abelian category. Throughout, we compose morphism left to right. 
Fix for  $i=1,\dots,n$,
commuting exact functors $F_i\colon\A\to\A$, natural transformations 
$\eta_i\colon F_i\to\id_\A$ and integer weights $p_i\ge 2$.
For any $M\in\A$, we denote by
 \[\eta_i(M)\colon F_iM\to M\] instead
of the more conventional notation ${\eta_{i,M}}$.
Using this data, we will now
define a new category \[\AAA\]
of $n$-dimensional grids  of size $(p_1+1)\times\dots\times (p_n+1)$ of commuting morphisms. To make this precise we need to introduce 
some notation:
Let \[ \grid = \{1, \ldots, p_1\} \times \cdots \times \{1, \ldots, p_n\} \subseteq \Z^n, \]
and denote by $\e_i$ the $i$-th basis vector in $\Z^n$. 

Throughout, to allow for compact notation, whenever objects or morphisms are indexed by $\grid$ we also allow non-positive indices and interpret them via $M_{\a} := F_i M_{\a + p_i \e_i}$ and similar for morphisms. Note that the assumption that the $F_i$ commute makes this well-defined even if several indices are non-positive.

With this notation, we define objects of $\AAA$ to be tuples
\[( (M_\a)_{\a \in \grid},  (f^i_\a \colon M_{\a-\e_i} \to M_{\a})_{\substack{ 1\le i\le n \\ \a \in \grid}} ), \]
subject to the conditions:
\begin{itemize}

	\item (commutativity condition) for any $i, j \in \{1, \ldots, n\}$ and $\a \in \grid$ we have $ f^j_{\a - \e_i} f^i_{\a}=
	f^i_{\a - \e_j}f^j_{\a} $ i.e. the following diagram commutes:
\[\xymatrix{
	M_{\a-\e_i-\e_j} \ar[r]^{f^j_{\a-\e_i}}\ar[d]_{f^i_{\a-\e_j}}&M_{\a-\e_i}\ar[d]^{f^i_{\a}}\\
	M_{\a-\e_j} \ar[r]^{f^j_\a}&M_{\a}
}\]

\item (cycle condition)  for any $i \in \{1, \ldots, n\}$ and $\a \in \grid$ we have 
	 $f_{\a - (p_i - 1) \e_i}^i \cdots f_{\a-\e_i}^i f_{\a}^i=\eta_i(M_{\a})$.
\end{itemize}

A morphism $\varphi \colon (M_\a,f^i_\a) \to (N_\a,g^i_\a) \in\AAA$ is a set of morphisms $\varphi_\a\colon M_\a\to N_\a$ in $\A$ with $\a \in \grid$, such that  the following diagram commutes:
\[\xymatrix{
	M_{\a-\e_i} \ar[r]^{f^i_{\a}}\ar[d]_{\varphi_{\a-\e_i}}&M_{\a}\ar[d]^{\varphi_{\a}}\\N_{\a-\e_i} \ar[r]^{g^i_\a}&N_{\a}
}\]

\begin{example}\label{ex:1}
	If $n=1$ then objects in $A[\eta^{\frac{1}{p}}]$ are
	sequences\[M_0=FM_p\xrightarrow{f_1}M_1\xrightarrow{f_2}\dots\xrightarrow{f_{p}}M_p\]
	such that the composition 
	\[FM_{p-d-1}\xrightarrow{Ff_{p-d}}\dots\xrightarrow{Ff_{p}}FM_p\xrightarrow{f_1}M_1\xrightarrow{f_2}\dots\xrightarrow{f_{p-d}}M_{p-d}\]
	is equal to $\eta(M_{p-d})\colon FM_{p-d}\to M_{p-d}$ for all $0\le d\le p-1$.
\end{example}
\begin{example}
	Suppose $n=2,\; p_1=2$ and $p_2=3$. Then objects in $\A[\eta_1^{\frac{1}{p_1}}, \eta_2^{\frac{1}{p_2}}]$ are
	\[\xymatrix{
		F_1F_2M_{2,3}\ar[r]^{F_1f^2_{(2,1)}}\ar[d]_{F_2f^1_{(1,3)}}&F_1
		M_{2,1}\ar[r]^{F_1f^2_{(2,2)}}\ar[d]_{f^1_{(1,1)}}&F_1M_{2,2}\ar[r]^{F_1f^2_{(2,3)}}\ar[d]_{f^1_{(1,2)}}&
		F_1M_{2,3}\ar[d]_{f^1_{(1,3)}}\\
		F_2M_{1,3}\ar[r]^{f^2_{(1,1)}}\ar[d]_{F_2f^1_{(2,3)}}&M_{1,1}\ar[r]^{f^2_{(1,2)}}\ar[d]_{f^1_{(2,1)}}&M_{1,2}\ar[r]^{f^2_{(1,3)}}\ar[d]_{f^1_{(2,2)}}&M_{1,3}\ar[d]_{f^1_{(2,3)}}\\
		F_2M_{2,3}\ar[r]^{f^2_{(2,1)}}&M_{2,1}\ar[r]^{f^2_{(2,2)}}&M_{2,2}\ar[r]^{f^2_{(2,3)}}&M_{2,3}
	}\] where all the squares commute and the rows and columns satisfy the cycle
	conditions.
\end{example}
In this paper, we are primarily concerned with the existence a cotilting objects in $\AAA$.
\begin{definition}\label{def:tilting}
	Let $\A$ be an abelian category. We say an object a in $T$ is \emph {tilting} (resp.
	\emph {cotilting}), if satisfies the following $2$
	conditions:
	\begin{itemize}
		\item Rigidity: $\Ext^i_\A(T,T)=0$ for all
			$i>0$,
		\item Generation (resp. cogeneration): 
			$\Ext^i_\A(T,M)=0$ (resp.
			$\Ext^i_\A(M,T)=0$) for all
			$i\ge 0$ implies $M=0$.
	\end{itemize}
\end{definition}
In the next $2$ sections will focus on proving results regarding cotilting objects, rather than tilting. Analogous results can be derived
for the latter, however the corresponding results, are of little practical use in the applications to orders which we
have in mind in Sections \ref{sec:order} and \ref{sec:applications}. However, due to the existence of Serre duality in the order setting, tilting
and cotilting objects coincide.
\section{Cotilting for the case with only one weight}\label{1weight}
In this section we analyse the situation where $n=1$, i.e. the category $\A[\eta^{\frac{1}{p}}]$. Recall that this category was already
introduced in Example \ref{ex:1}. The results we obtain, will be useful when we study the general case.

We denote by $\Ares\eta$ the full subcategory of $\A$ with objects given by:
\[\Ares\eta:=\{M\in\A\mid\eta(M)=0\}\]

We begin, with a further simplification, namely assume that $F=0$. In this case, \[\A[0^{\frac{1}{p}}]={\rm rep}_{\A}A_p:={\rm Fun}(A_p,
\A)\] where $A_p$ is the linearly orientated quiver of Dynkin type $A$ and $p$ vertices and viewed as a (finite) category in the obvious way.
We have an exact functor $\delta\colon {A}\to{\A}[0^{\frac{1}{p}}]$ with
\[\delta(M):= (0\to M\to\dots\to M)\oplus\dots\oplus (0\to M\to0\to\dots\to 0)\]
which has an exact left adjoint
\begin{align*}
\Ldelta(0\to M_1\to\dots\to M_{p})&=M_1\oplus\dots\oplus M_p\\
\end{align*}

\begin{lemma}\label{lemma:ext}
	Let $M,N\in\A$. Then $\Ext^i_{\A}(M,N)=0$ if and only if $\Ext^i_{\A[\eta^{\frac{1}{p}}]}(\delta M, 
	\delta N)=0$ for $i\ge 1$.
\end{lemma}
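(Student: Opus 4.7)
The plan is to use the exact adjoint pair $(\Ldelta, \delta)$ just introduced to reduce $\Ext$-computations in $\A[\eta^{\frac{1}{p}}]$ (which, in the $F = 0$ setting of this section, equals ${\rm Fun}(A_p, \A)$) to $\Ext$-computations in $\A$. First I would check that $\delta$ is exact --- it acts pointwise as either the identity or zero on each coordinate of the representation --- and that it preserves injectives. The latter is standard: since $\Ldelta$ is exact, the adjunction gives
\[
\Hom_{\A[\eta^{\frac{1}{p}}]}(-, \delta I) \cong \Hom_\A(\Ldelta(-), I),
\]
which is exact in the first variable whenever $I$ is injective in $\A$.

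From this, for any injective resolution $N \to I^\bullet$ in $\A$, the complex $\delta N \to \delta I^\bullet$ is an injective resolution of $\delta N$ in $\A[\eta^{\frac{1}{p}}]$. Using the adjunction termwise I would then compute
\[
\Ext^i_{\A[\eta^{\frac{1}{p}}]}(\delta M, \delta N) = H^i \Hom_{\A[\eta^{\frac{1}{p}}]}(\delta M, \delta I^\bullet) \cong H^i \Hom_\A(\Ldelta \delta M, I^\bullet) = \Ext^i_\A(\Ldelta \delta M, N).
\]

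To finish I would identify $\Ldelta \delta M$ as a direct sum of copies of $M$: the $j$-th summand of $\delta M$ contains $M$ in $p-j+1$ coordinates, so $\Ldelta \delta M \cong M^{\oplus p(p+1)/2}$. Consequently
\[
\Ext^i_{\A[\eta^{\frac{1}{p}}]}(\delta M, \delta N) \cong \Ext^i_\A(M, N)^{\oplus p(p+1)/2},
\]
and the biconditional for vanishing in degrees $i \geq 1$ follows at once. The argument is largely formal; the only point requiring a small amount of care is the elementary bookkeeping that yields the multiplicity $p(p+1)/2$, together with confirming that $\Ldelta$ really is exact, which is immediate since it is just the "sum of coordinates" functor on representations of $A_p$.
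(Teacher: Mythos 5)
Your proposal follows the paper's overall strategy — reduce $\Ext$ in $\A[0^{\frac{1}{p}}]$ to $\Ext$ in $\A$ via the exact adjoint pair $(\Ldelta, \delta)$, then identify $\Ldelta\delta M \cong M^{p(p+1)/2}$ — and your bookkeeping for the multiplicity is correct. However, there is a genuine gap in the mechanism you use for the $\Ext$-level adjunction: you build an injective resolution $N \to I^\bullet$ in $\A$ and observe that $\delta$ preserves injectives, which presupposes that $\A$ has enough injectives. The paper deliberately avoids that hypothesis: $\A$ is only assumed to be a $k$-linear, Hom-finite abelian category, and in the intended applications of Section~\ref{sec:order} one takes $\A = \coh X$ for a projective variety $X$, which does \emph{not} have enough injectives. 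The paper flags this explicitly in the paragraph preceding Lemma~\ref{lem.exact_adjoints}, noting that the usual argument via resolutions is unavailable and then proving the $\Ext$-adjunction for exact adjoint pairs directly with Yoneda-extension groups (push out along the counit, pull back along the unit). Replacing your injective-resolution step with an appeal to that Yoneda-$\Ext$ lemma closes the gap; the rest of your argument, including the count $p(p+1)/2$ and the exactness of $\delta$ and $\Ldelta$, is fine.
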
 

	\begin{proof}
		Since $\delta$ and $\Ldelta$ are exact
	\[\Ext^i_{\A[\eta^{\frac{1}{p}}]}(\delta M,\delta N)=\Ext^i_{\A}(\Ldelta\delta
M,N)=\Ext^i_{\A}(M^n,N)\]
with $n=\frac p2(p+1)$. Note that the first equality follows from Lemma \ref{lem.exact_adjoints}
ahead.
	\end{proof}
\begin{proposition}\label{prop:tiltingobject}
	If $T$ is a cotilting object in $\A$  then $\delta(T)$ is a cotilting object in $\A[0^{\frac{1}{p}}]$.
\end{proposition}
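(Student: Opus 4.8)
The plan is to verify the two conditions of Definition~\ref{def:tilting} for $\delta(T)$, using Lemma~\ref{lemma:ext} to transfer everything back to $\A$. For rigidity, Lemma~\ref{lemma:ext} gives $\Ext^i_{\A[0^{\frac1p}]}(\delta T,\delta T)=0$ for $i\ge 1$ directly from $\Ext^i_\A(T,T)=0$, so this half is immediate. The substance of the proposition is therefore the cogeneration condition: I must show that if $P\in\A[0^{\frac1p}]$ satisfies $\Ext^i_{\A[0^{\frac1p}]}(P,\delta T)=0$ for all $i\ge 0$, then $P=0$.

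First I would unwind what such a $P$ looks like. An object of $\A[0^{\frac1p}]={\rm rep}_\A A_p$ is a chain $P_1\to P_2\to\dots\to P_p$. The key computational input is to identify $\Hom_{\A[0^{\frac1p}]}(-,\delta N)$ and its derived functors in terms of $\A$-level data. Since $\delta$ has the exact left adjoint $\Ldelta$ and $\Ldelta\delta N = N^{\oplus n}$ as in the proof of Lemma~\ref{lemma:ext}, one has $\Ext^i_{\A[0^{\frac1p}]}(P,\delta N)\cong\Ext^i_\A(\Ldelta P, N)$ where $\Ldelta(P_1\to\dots\to P_p)=P_1\oplus\dots\oplus P_p$ — wait, that adjunction runs the wrong way; $\Ldelta$ is left adjoint to $\delta$, so $\Hom(\delta M,\delta N)$ simplifies but $\Hom(P,\delta N)$ does not. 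So instead I would compute $\Ext^i_{\A[0^{\frac1p}]}(P,\delta N)$ directly: $\delta N$ is a direct sum of the "interval" objects $I_{[1,j]} = (0\to\dots\to 0\to N=\dots=N)$ (with $N$ in positions $j,\dots,p$), and $\Hom_{{\rm rep}_\A A_p}(P, I_{[1,j]})$ together with its extensions can be read off from a short exact sequence in ${\rm rep}_\A A_p$ relating $I_{[1,j]}$ to constant objects, or more concretely one finds $\Ext^i_{A_p}(P, I_{[1,j]})$ is computed by the two-term complex $\Ext^i_\A(P_{j+1},N)\to\Ext^i_\A(P_j,N)$ (with the convention $P_{p+1}=0$), whose homology in degree $\ge 1$ picks up $\Ext^i_\A$ of the cokernel/kernel of $P_{j}\to P_{j+1}$. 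Granting this, the vanishing of all $\Ext^i_{\A[0^{\frac1p}]}(P,\delta T)$ forces, for each $j$, the vanishing of $\Ext^i_\A(\cok(P_j\to P_{j+1}), T)$ and $\Ext^i_\A(\ker(P_j\to P_{j+1}),T)$ for all $i\ge 0$; by cogeneration of $T$ in $\A$ each such kernel and cokernel is $0$, hence all the maps $P_j\to P_{j+1}$ are isomorphisms, so $P\cong\delta'(P_1)$ for the constant functor $\delta'$, and then a final application of cogeneration to $\Ext^i_\A(P_1,T)=0$ yields $P_1=0$, so $P=0$.

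The main obstacle I anticipate is getting the homological algebra in ${\rm rep}_\A A_p$ exactly right — in particular identifying $\Ext^i_{\A[0^{\frac1p}]}(P,\delta N)$ with the right mapping-cone-type expression in $\A$, since $A_p$ has global dimension $1$ as a quiver and this interacts with the $\Ext$ groups of $\A$ via a spectral sequence (or a short exact sequence of functors). A clean way to organise this is to use the standard projective resolution of an object of ${\rm rep}_\A A_p$: every $P$ has a two-term resolution $0\to\bigoplus_j \delta_j(P_{j}')\to\bigoplus_j\delta_j(P_j'')\to P\to 0$ built from the projective ${\rm rep}_\A A_p$-objects $\delta_j(M)=(0\to\dots\to0\to M=\dots=M)$ tensored with projectives over the quiver, reducing $\Ext^i_{\A[0^{\frac1p}]}(P,\delta T)$ to $\Ext^i_\A$ between the $\A$-components. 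Alternatively — and this may be the slickest route — one observes $\Ldelta$ is also \emph{right} adjoint to a functor (the evaluation/shift structure of $A_p$ being Gorenstein), or one simply cites that $\delta$ being a ``constant diagram'' functor with the stated exact adjoint makes ${\rm rep}_\A A_p$ behave like $\A$ up to the combinatorics of $A_p$; in any case the cogeneration argument then runs as sketched. I would keep the write-up at the level of ``resolve $P$ by projectives, apply $\Hom(-,\delta T)$, use Lemma~\ref{lemma:ext} and cogeneration of $T$'' rather than grinding the indices.
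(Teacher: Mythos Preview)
You had the right idea and then talked yourself out of it. You wrote the correct isomorphism $\Ext^i_{\A[0^{\frac1p}]}(P,\delta N)\cong\Ext^i_\A(\Ldelta P, N)$, and then retracted it with ``wait, that adjunction runs the wrong way''. It does not: $\Ldelta$ is \emph{left} adjoint to $\delta$, which means precisely that $\Hom(\Ldelta P,N)\cong\Hom(P,\delta N)$ for arbitrary $P$ (not only $P=\delta M$). Since both functors are exact, Lemma~\ref{lem.exact_adjoints} extends this to all $\Ext^i$. With $\Ldelta P = P_1\oplus\cdots\oplus P_p$, the vanishing of $\Ext^i_\A(P_1\oplus\cdots\oplus P_p,T)$ for all $i\ge 0$ forces each $P_j=0$ by cogeneration of $T$. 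That is the entire paper's proof of cogeneration, in one line.

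Your fallback route is not only unnecessary but also contains an error: you decompose $\delta N$ into the intervals with $N$ in positions $j,\ldots,p$ (the projective-type intervals), whereas in fact the summands of $\delta N$ are the intervals with $N$ in positions $1,\ldots,j$ (the injective-type intervals); compare the definition of $\delta$ just before Lemma~\ref{lemma:ext}. With the correct decomposition, each summand $I_j(N)$ is right adjoint to evaluation at $j$, so $\Ext^i(P,I_j(T))=\Ext^i_\A(P_j,T)$ directly and no two-term complex or kernel/cokernel analysis is needed --- which is again just the adjunction argument in disguise. The spectral-sequence/projective-resolution machinery you sketch would eventually work, but it is far heavier than what the situation requires.
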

\begin{proof}
	By Lemma \ref{lemma:ext} and the fact that $T$ is cotitling in $\A$ we see that $\delta(T)$ is rigid.
	We now prove that $\delta(T)$ cogenerates $\A[\eta^{\frac{1}{p}}]$. Let $M\in \A[0^{\frac{1}{p}}]$ and
	suppose $\Ext^i_{\A[0^{\frac{1}{p}}]}(M, \delta(T))=0$.
	Then\[0=\Ext^i_{\A[0^{\frac{1}{p}}]}(M,\delta T)=\Ext^i_\A(\Ldelta M,T)=\Ext^i_\A(M_1\oplus\dots\oplus M_p,T) \] and so, since
	$T$ cogenerates $\A$,
	$M_1=\dots=M_p=0$ i.e. $M=0$. 
\end{proof}
More generally, if $n=1$  we can analyse $\A[\eta^{\frac{1}{p}}]$, and in the next section (for an arbitrary $n$) $\AAA$,
using recollements, which we now define.
\begin{definition}[\cite{BBD}]
	Let $\A',\A,\A''$ be abelian categories. A \emph{recollement} is the following diagram of additive functors
\[\xymatrix@C=70pt{
{\A'} \ar[r]^{\iota}
& \A \ar@/^1.5pc/[l]_{\Riota} \ar@/_1.5pc/[l]_{\Liota}  \ar[r]^{\pi} 
& \A''
\ar@/_1.5pc/[l]_{\Lpi} \ar@/^1.5pc/[l]_{\Rpi}
}\]such that
\begin{enumerate}
	\item $(\Liota,\iota,\Riota)$ and $(\Lpi,\pi,\Rpi)$ are adjoint triples.
	\item $\iota, \Lpi$ and $\Rpi$ are fully faithful.
	\item ${\rm im}\;\iota=\ker\pi$.
\end{enumerate}
\end{definition} 
\begin{example}\label{eg:standard} Let $A$ be a ring, and $e$ an idempotent. Denote by $\mod A$ the category of left $A$-modules.
 Then we have the following recollement:
\[\xymatrix@C=70pt{
	\mod \Ae \ar[r]^{\iota}
& \mod A \ar@/^1.5pc/[l]_{\Riota} \ar@/_1.5pc/[l]_{\Liota}  \ar[r]^{\pi} 
& \mod eAe
\ar@/_1.5pc/[l]_{\Lpi} \ar@/^1.5pc/[l]_{\Rpi}
}\]
where 
	\begin{align*}
	\iota = {\rm inclusion},\quad \pi&=e(-),\quad \Lpi=Ae\otimes_{eAe} -, \quad \Rpi=\Hom_{eAe}(eA, -)\\
	\Liota&= \frac{A}{\langle e\rangle}\otimes_A -,\quad \Riota= \Hom_A(\frac{A}{\langle e\rangle},-)
\end{align*} If \[A=
	\begin{bmatrix}
		R&I&I\\R&R&I\\R&R&R
	\end{bmatrix}\ni
e=
\begin{bmatrix}
	0&0&0\\0&0&0\\0&0&1
\end{bmatrix}
\] where $R$ is a commutative ring and $I$ is a maximal ideal.
Then setting $k=R/I$ we see that\[\frac{A}{\langle e\rangle}=
	\begin{bmatrix}
		k&0&0\\k&k&0\\0&0&0
	\end{bmatrix}
\] and so the recollement becomes
\[\xymatrix@C=70pt{
	\mod \mat
	\ar[r]^{\iota}
& \mod \matt \ar@/^1.5pc/[l]_{\Riota} \ar@/_1.5pc/[l]_{\Liota}  \ar[r]^{\pi} 
& \mod R
\ar@/_1.5pc/[l]_{\Lpi} \ar@/^1.5pc/[l]_{\Rpi}
}\]
with 
\[
	\Lpi=
	\begin{bmatrix}
		I\\I\\R
	\end{bmatrix}\otimes_R -\quad
	\Rpi=
	\begin{bmatrix}
		R\\R\\R
	\end{bmatrix}\otimes_R -
\]
\end{example} We now return to the category $\A[\eta^{\frac{1}{p}}]$. Recall that we denote by $\A_{\eta}$ the subcategory of $\A$ consisting of all objects such that $\eta$ vanishes.

\begin{proposition} \label{prop.recollement}
	The following is a recollement:
\[\xymatrix@C=70pt{
	{\Ares\eta[0^{\frac{1}{p-1}}]} \ar[r]^{\iota}
	& \A[\eta^{\frac{1}{p}}] \ar@/^1.5pc/[l]_{\Riota} \ar@/_1.5pc/[l]_{\Liota}  \ar[r]^{\pi} 
& \A
\ar@/_1.5pc/[l]_{\Lpi} \ar@/^1.5pc/[l]_{\Rpi}
}\]
where the functors are defined by the following: 
\begin{align*}
	&\iota(0\to M_1\to\dots\to M_{p-1})=(0\to M_1\to\dots\to M_{p-1}\to 0)\\
	&\pi(FM_p\to M_1\to\dots\to M_p)=M_p\\
	&{\Lpi}M=(FM\xrightarrow{\id}FM\to\dots\to FM\xrightarrow{\eta(M)}M)\\
	&\Rpi M=(FM\xrightarrow{\eta(M)}M\xrightarrow{\id}M\to\dots\to M)\\
	&{\Liota}(FM_p\xrightarrow{f_1}M_1\xrightarrow{f_2}\dots\xrightarrow{f_{p}}M_p)=
	(0\to\cok f_1\to\cok{f_1f_2}\to\dots\to\cok{f_{1}\dots f_{p-1}})\\
	&{\Riota}(FM_p\xrightarrow{f_1}M_1\xrightarrow{f_2}\dots\xrightarrow{f_{p}}M_p)=
	(0\to\ker f_{2}\dots f_p\to\ker f_{3}\dots f_{p}\to\dots\to \ker f_{p})
\end{align*}
\end{proposition}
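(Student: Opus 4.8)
The plan is to verify the three defining conditions of a recollement directly, leaning on Example~\ref{ex:1} for the explicit description of objects of $\A[\eta^{\frac{1}{p}}]$. First I would check that all seven functors are well-defined on objects and morphisms: for $\iota$ this means observing that a representation of the $A_{p-1}$-quiver with values in $\Ares\eta$, padded by a $0$ on the right, automatically satisfies every cycle condition in $\A[\eta^{\frac1p}]$ (the composite around any cycle lands in the rightmost spot, which is $0$, and $\eta$ vanishes on $\Ares\eta$ by definition); for $\Lpi$ and $\Rpi$ one checks the single cycle condition collapses to $\eta(M) = \eta(M)$; for $\Liota$ and $\Riota$ the point is that the cokernels $\cok(f_1\cdots f_j)$ and kernels $\ker(f_{j+1}\cdots f_p)$ carry induced maps, that these lie in $\Ares\eta$ (the cycle condition forces $\eta$ to factor through the relevant (co)kernel and then vanish), and that the length-$(p-1)$ cycle condition for the $0$-functor — i.e. that the total composite is $0$ — holds, again because the relevant composite vanishes by construction of a (co)kernel. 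On morphisms everything is functorial by the universal properties of cokernel and kernel.

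Next I would establish the two adjunction triples. For $(\Lpi, \pi, \Rpi)$: a morphism $\Lpi M \to (N_\bullet)$ is, by commutativity of the ladder, determined by its last component $M \to N_p$, and conversely any map $M\to N_p$ extends uniquely up the constant part of $\Lpi M$ — giving $\Hom(\Lpi M, N_\bullet)\cong\Hom_\A(M, N_p)=\Hom_\A(M,\pi N_\bullet)$ naturally; dually for $\Rpi$, a map $(N_\bullet)\to \Rpi M$ is determined by $N_p\to M$ since $\Rpi M$ is constant from position $1$ to $p$ with the map into $M$ being $\eta$, so one tracks $N_p\to M$ around the cycle. For $(\Liota, \iota, \Riota)$: given $(M_\bullet)\in\A[\eta^{\frac1p}]$ and $(K_\bullet)\in\Ares\eta[0^{\frac1{p-1}}]$, a map $\Liota M_\bullet\to K_\bullet$ is a compatible family $\cok(f_1\cdots f_j)\to K_j$, which by the universal property of cokernel is the same as a family $M_j\to K_j$ killing the incoming composites — and that is exactly a map $M_\bullet\to\iota K_\bullet$ (note $\iota K_\bullet$ has $0$ in position $p$ and in the shifted position $0$, which forces the composites to vanish); dually for $\Riota$ using the universal property of kernel. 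Throughout, naturality is automatic.

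Then conditions (2) and (3) are quick. Full faithfulness of $\iota$ is clear since it is essentially an inclusion of a full subcategory (a representation of $A_{p-1}$ in $\Ares\eta$ padded by $0$). Full faithfulness of $\Lpi$ and $\Rpi$ follows from the adjunction computations: $\pi\Lpi\cong\id_\A$ and $\pi\Rpi\cong\id_\A$ (apply $\pi$ to the explicit formulas — the last component of $\Lpi M$ and of $\Rpi M$ is $M$), and an adjoint functor is fully faithful iff the corresponding unit/counit is an isomorphism. For (3): $(M_\bullet)\in\ker\pi$ means $M_p=0$, hence also $M_0=FM_p=0$, so $(M_\bullet)=(0\to M_1\to\cdots\to M_{p-1}\to 0)$; the cycle condition of length $p$ read starting at various points forces each composite $M_j\to M_j$ (going around) to be $\eta(M_j)$, but these composites factor through $M_p=0$, so $\eta(M_j)=0$ and $M_j\in\Ares\eta$ — thus $\ker\pi$ is precisely the essential image of $\iota$, and one checks the length-$(p-1)$ cycle conditions are vacuous here since they again ask a composite through $0$ to vanish.

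The main obstacle I anticipate is bookkeeping around the index-shifting convention ($M_{\a}:=F M_{\a+p\e}$) together with the various cycle conditions: one must be careful that the (co)kernels defining $\Liota,\Riota$ genuinely land in $\Ares\eta$ and that $\Liota$ and $\Riota$ are \emph{exact}, which is needed for the applications (Lemmas~\ref{lemma:ext}, \ref{lem.exact_adjoints}) though not strictly part of the recollement axioms — exactness of $\Liota$ is not automatic from it being a left adjoint, so I would verify it by hand, checking that a short exact sequence in $\A[\eta^{\frac1p}]$ (which is computed componentwise) is sent to a componentwise short exact sequence, using that $F$ is exact and the snake lemma to control the cokernels. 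Everything else is a routine, if slightly lengthy, diagram chase.
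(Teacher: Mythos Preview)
Your verification of the recollement axioms is correct and is exactly what the paper means by its one-word proof ``Straightforward''. The direct checks of well-definedness, the two adjoint triples via universal properties of kernel/cokernel, full faithfulness via $\pi\Lpi\cong\id\cong\pi\Rpi$, and $\ker\pi=\operatorname{im}\iota$ are all sound.

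One genuine error in your final paragraph: you propose to verify that $\Liota$ (and $\Riota$) are exact. They are \emph{not} exact in general. The paper makes this explicit: immediately after Proposition~\ref{prop.recollement} it notes that ``the situation is slightly more involved for $\iota$, since none of the functors $\Liota$ or $\Riota$ is exact'', and then introduces the resolving subcategory $\E$ and proves only that $\Liota$ is exact \emph{on} $\E$ (under the standing assumption that $\A$ has enough objects on which $\eta$ is mono). Your snake-lemma sketch would fail because, for a short exact sequence $0\to X_\bullet\to Y_\bullet\to Z_\bullet\to 0$ in $\A[\eta^{\frac1p}]$, the map $\cok(f_1^X\cdots f_j^X)\to\cok(f_1^Y\cdots f_j^Y)$ need not be injective unless the maps $f_1^Z\cdots f_j^Z$ are monomorphisms --- which is precisely the condition defining $\E$. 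Since you correctly flagged this as outside the scope of the recollement axioms, it does not affect the validity of your proof of the proposition itself, but you should drop the claim and instead follow the paper's route through $\E$ for the subsequent $\Ext$-computations.
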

\begin{proof}
	Straightforward.
\end{proof}

We need to control how these functors affect $\Ext$-spaces. In the following, we prove
that exact adjoint functors are also adjoint with respect to $\Ext$.
Since the most usual way to see this is to use projective or injective resolutions, which we do not
assume to exist here, we give a small argument using Yoneda-extension groups.

\begin{lemma} \label{lem.exact_adjoints}
Let $\A$ and $\mathcal{B}$ be exact categories, $L \colon \A \to \mathcal{B}$ and $R \colon \mathcal{B} \to \A$ a pair of exact adjoint functors. Then
\[ \Ext_{\A}^n(A, R B) \simeq \Ext_{\mathcal{B}}^n(L A, B) \]
functorial in $A \in \A$ and $B \in \mathcal{B}$.
\end{lemma}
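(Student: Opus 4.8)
The plan is to establish the isomorphism by induction on $n$, using the standard interpretation of $\Ext^n$ via Yoneda equivalence classes of $n$-fold extensions (admissible exact sequences) in an exact category. The $n=0$ case is precisely the adjunction isomorphism $\Hom_{\A}(A,RB)\simeq\Hom_{\mathcal B}(LA,B)$ that we are given, and functoriality of this isomorphism in both variables is part of the data of an adjunction. The $n=1$ case is the key new input: I would define a map from Yoneda $1$-extensions of $B$ by $LA$ in $\mathcal B$ to Yoneda $1$-extensions of $RB$ by $A$ in $\A$ by applying the exact functor $R$ to a short exact sequence $0\to B\to E\to LA\to 0$, obtaining $0\to RB\to RE\to RLA\to 0$, and then pulling back along the unit $\eta_A\colon A\to RLA$. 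Exactness of $R$ guarantees $RE$ really is an extension (admissible exact sequences are preserved), and the pullback exists in any exact category. In the reverse direction, given $0\to RB\to E'\to A\to 0$ in $\A$, apply $L$ and push out along the counit $\varepsilon_B\colon LRB\to B$ to land in extensions of $B$ by $LA$. I would then check these two constructions are mutually inverse on Yoneda equivalence classes, which is where the adjunction triangle identities get used.

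For the inductive step, I would use the long-exact-sequence / dimension-shifting machinery: realize $\Ext^{n}(A,RB)$ as a quotient/subobject built from $\Ext^{1}$ and $\Ext^{n-1}$ via the splicing of $n$-fold extensions, or more cleanly, embed $A$ (resp.\ $B$) into an exact sequence to shift degree. Concretely, any $n$-fold extension of $RB$ by $A$ factors as a Yoneda composite of a $1$-fold extension $0\to A\to X\to Z\to 0$ with an $(n-1)$-fold extension of $RB$ by $Z$; since $R$ is exact it carries such a factorization on the $\mathcal B$-side to the corresponding factorization on the $\A$-side, and conversely. Combined with the $n=0$ and $n=1$ cases and the five lemma for the connecting maps, this gives the isomorphism in all degrees and its functoriality in $A$ and $B$ (functoriality is inherited from the functoriality of the building-block isomorphisms together with naturality of the unit and counit).

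The main obstacle is the bookkeeping in the $n=1$ case: verifying that ``apply $R$, then pull back along the unit'' and ``apply $L$, then push out along the counit'' are inverse bijections on Yoneda classes. One must show that starting from $0\to B\to E\to LA\to 0$, applying $R$, pulling back along $\eta_A$ to get $0\to RB\to P\to A\to 0$, then applying $L$ and pushing out along $\varepsilon_B$, returns the original class. This is a diagram chase that comes down to the triangle identity $\varepsilon_{LA}\circ L\eta_A=\id_{LA}$ (and its companion for $R$), plus compatibility of pullbacks/pushouts with the exact functors $L,R$ — both of which are available precisely because $L$ and $R$ are assumed exact. A secondary, purely technical point to be careful about is that in a general exact category one should phrase everything in terms of admissible monics and epics and the corresponding (co)base-change axioms, rather than kernels and cokernels; but no essential difficulty arises there.
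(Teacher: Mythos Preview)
Your core construction---apply the exact functor and then pull back along the unit (resp.\ push out along the counit), verifying the two are inverse via the triangle identities---is exactly what the paper does. The difference is that the paper applies this construction \emph{directly} to Yoneda $n$-extensions for all $n$ at once, with no induction: given $\mathbb{E} \in \Ext^n_{\A}(A, RB)$ one forms ${\varepsilon_B}_* L(\mathbb{E}) \in \Ext^n_{\mathcal{B}}(LA, B)$, and conversely $\omega_A^* R(\mathbb{E})$ in the other direction. This works because pushout along $\varepsilon_B$ and pullback along $\omega_A$ only touch the two ends of an $n$-fold exact sequence, so nothing new happens for $n>1$; the verification that the composites are the identity then reduces (via exactness of $R$, naturality of the unit, and the commutation of pushout and pullback) to a single triangle identity, just as you outline for $n=1$. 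Your inductive step, by contrast, is where things get shakier: dimension-shifting by embedding $A$ or $B$ in a short exact sequence is not available without enough projectives or injectives, which the paper explicitly avoids assuming; and your alternative via splicing $n$-extensions into $1$- and $(n-1)$-pieces, while workable in principle, leaves well-definedness on equivalence classes unaddressed (and contains a small slip: in an element of $\Ext^n(A,RB)$ the object $A$ sits at the quotient end, so the $1$-fold piece should read $0\to Z\to X\to A\to 0$). None of this is fatal, but the paper's one-shot argument is both shorter and cleaner, and sidesteps the issue entirely.
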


\begin{proof}
	We view the $\Ext$ groups as Yoneda-$\Ext$ groups and use the following notation: given
	$\mathbb{E}\in \Ext^i(A,B)$ and maps $\alpha\colon B\to B'$ and $\beta\colon A'\to A$ we denote by 
	$\alpha_*\mathbb{E}\in \Ext^i(A, B')$ the extension obtain by taking the pushout along $\alpha$ and by
	$\beta^* \mathbb{E}\in
	\Ext^i(A',B)$ the extension obtained by taking the pullback along $\beta$. Note that by
	\cite[Lemma 3.1.6]{ML} we have \[\alpha_*\beta^*\mathbb{E}\simeq \beta^*\alpha_*\mathbb{E}\]

To prove the lemma, we give two maps, and show that they are mutually inverse to each other.
From left to right, let $\mathbb{E} \in \Ext_{\A}^n(A, R B)$. Since $L$ is exact we may apply it to
$\mathbb{E}$, obtaining $L(\mathbb{E}) \in \Ext_{\mathcal{B}}^n(LA, LRB)$. Now consider the counit
of the adjunction $\varepsilon_B \colon LR B \to B$. Taking the pushout along this map we obtain
${\varepsilon_B}_*L(\mathbb{E}) \in \Ext_{\mathcal{B}}^n(LA, B)$.

Conversely, from right to left, we send the extension $\mathbb{E} \in \Ext_{\mathcal{B}}^n(LA, B)$
to $\omega_A^* R(\mathbb{E}) \in \Ext_{\A}^n(A, R B)$, where $\omega_A \colon A \to RLA$ denotes the unit of the adjunction.

Both constructions are well-defined on the Yoneda-extension groups, and are functorial. 
It remains to see that they are mutually inverse. Here we check that going from left to 
right and then back again one obtains the extension one started with. Checking that this
also works the other way around is very similar.

So let $\mathbb{E}  \in \Ext_{\A}^n(A, R B)$. Applying $L$ to $\mathbb{E}$ and
sending it to $\Ext_{\mathcal{B}}^n(L A, B)$ via a pushout along $\varepsilon_B$ 
and then applying $R$ and sending it back 
back to $\Ext_{\A}^n(A, R B)$ via the pullback along $\omega_A$
we obtain $\omega_A^* R({\varepsilon_B}_*L(\mathbb{E})) \in \Ext_{\A}^n(A, R B)$. Since $R$ is exact it commutes with pushouts,
so this is the same as applying $RL$ to $\mathbb{E}$ and then taking the pushout along
$R(\varepsilon_B)$ followed by a pullback along $\omega_A$. Thus we have: 
\[ \omega_A^* R({\varepsilon_B}_*L(\mathbb{E})) \simeq \omega_A^*R(\varepsilon_B)_* RL(\mathbb{E})\simeq
R(\varepsilon_B)_* \omega_A^*RL(\mathbb{E}). \]
Moreover, since $\omega$ is a natural transformation $\id \to RL$, we have that $\omega_A^*
RL(\mathbb{E}) \simeq {\omega_{RB}}_*\mathbb{E}$. Thus
\[  \omega_A^* R({\varepsilon_B}_*L(\mathbb{E}))\simeq
R(\varepsilon_B)_*{\omega_{RB}}_*\mathbb{E}. \]
Now the proof is completed using the general fact for adjoint pairs, that $ R(\varepsilon_B)
\circ \omega_{RB} = \id_{RB}$.
\end{proof}

Now we observe that in the recollement of Proposition~\ref{prop.recollement} the functors
$\iota,\pi,{\Lpi}$ and $\Rpi$ are all exact. In particular Lemma~\ref{lem.exact_adjoints} implies that
\[\Ext^i(M, \pi N)\simeq \Ext^i(\Lpi M, N),\quad {\rm and}\quad {\Ext^i(\pi M, N)}
\simeq \Ext^i( M, \Rpi N)\] for all $i\ge 0$.

The situation is slightly more involved for $\iota$, since none of the functors $\Liota$ or $\Riota$ is exact. To be able to still control its effect on $\Ext$-spaces we will need the following assumption.
\begin{assumption}
	For the remainder of this section, we assume that $\A$ has enough objects $M$ such that $\eta(M)$ is a
	monomorphism. I.e.\ for all objects $X\in \A$ there exists an object $M\in\A$ and a
	surjection $M\twoheadrightarrow X$ such that $\eta(M)$ is a monomorphism.
\end{assumption}

\begin{lemma}\label{lemma:resolving} With the above assumption we the subcategory of $\A$ given by:
	
\[\E:=\{FM_p\xrightarrow{f_1}M_1\xrightarrow{f_2}\dots\xrightarrow{f_{p}}M_p\in\A[\eta^{\frac{1}{p}}]\;|\;f_{1}\dots
f_{p-1}
\;\rm is\;a\;monomorphism\}\] is a resolving subcategory.
\end{lemma}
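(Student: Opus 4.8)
The plan is to verify the three defining properties of a resolving subcategory for $\E$: that it contains the projectives (or more precisely, is closed under the relevant operations so as to be "resolving" in the ambient exact category $\A[\eta^{\frac1p}]$), that it is closed under extensions, and that it is closed under kernels of epimorphisms; and additionally that every object of $\A[\eta^{\frac1p}]$ admits an epimorphism from an object of $\E$. It is this last point where the Assumption is used, so I expect it to be the main obstacle, while the closure properties should reduce to routine diagram chases.

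First I would observe that $\E$ is closed under extensions and under kernels of epimorphisms. Given a short exact sequence $0\to M'\to M\to M''\to 0$ in $\A[\eta^{\frac1p}]$, it is computed componentwise, so in each degree we get short exact sequences $0\to M'_j\to M_j\to M''_j\to 0$ compatible with the structure maps. The composite $f_1\cdots f_{p-1}\colon FM_p\to M_{p-1}$ then fits into a map of short exact sequences, and a snake-lemma / five-lemma argument shows that if the outer two composites are monomorphisms then so is the middle one, and if the middle and right ones are monomorphisms then so is the left one (using that $F$ is exact, so that $FM'_p\to FM_p$ is still a monomorphism). This gives closure under extensions and under kernels of epimorphisms simultaneously.

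Next I would show the epimorphism-resolution property. Given an arbitrary object $N=(FN_p\xrightarrow{g_1}N_1\to\dots\to N_p)$ of $\A[\eta^{\frac1p}]$, I want an epimorphism $E\twoheadrightarrow N$ with $E\in\E$. The idea is to build $E$ out of the projective-like objects $\Lpi M$ from Proposition~\ref{prop.recollement}: for an object $M\in\A$ with $\eta(M)$ a monomorphism, the object $\Lpi M=(FM\xrightarrow{\id}FM\to\dots\to FM\xrightarrow{\eta(M)}M)$ has its composite $f_1\cdots f_{p-1}$ equal to $\id_{FM}$, hence a monomorphism, so $\Lpi M\in\E$. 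By the Assumption, for each relevant degree we may choose $M$ surjecting onto the appropriate component with $\eta(M)$ monic; using adjunction $\Hom_{\A[\eta^{\frac1p}]}(\Lpi M,N)\cong\Hom_\A(M,\pi N)=\Hom_\A(M,N_p)$ together with the standard trick of also hitting the lower-degree components (e.g.\ by also using shifted/truncated versions of such objects, or the projective generators coming from the recollement on both $\pi$ and $\iota$ sides), one assembles a finite direct sum $E$ of objects in $\E$ with a componentwise-surjective map to $N$. Since $\E$ is closed under finite direct sums (again by the five-lemma argument applied to $f_1\cdots f_{p-1}$ of a direct sum), $E\in\E$ and $E\twoheadrightarrow N$ is the desired epimorphism.

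Finally I would note that $\E$ contains enough objects to serve as the "projective" layer — in particular it contains the objects $\Lpi M$ and $\iota(\cdots)$ of the recollement as needed — so that together with the two closure properties above it satisfies the definition of a resolving subcategory. The one genuinely delicate point, and the step I expect to require the most care, is checking that the chosen cover $E\twoheadrightarrow N$ is surjective in \emph{every} degree, not just in degree $p$; this is exactly where one must exploit the freedom in the Assumption (choosing, for various degrees, possibly different objects $M$ with $\eta(M)$ monic) rather than a single choice, and where one must be careful that the monomorphism condition $f_1\cdots f_{p-1}$ monic is preserved by the building blocks one uses for the lower degrees.
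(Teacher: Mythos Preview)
Your proposal is correct and follows essentially the same approach as the paper. The paper's proof is briefer: it takes the closure properties of $\E$ under extensions and kernels of epimorphisms for granted (your snake-lemma argument is the right justification) and focuses solely on the epimorphism-from-$\E$ property, constructing for an arbitrary $M^\bullet$ an explicit cover $\bigoplus_i X_i^\bullet \twoheadrightarrow M^\bullet$ where each $X_i^\bullet = (FX_i \to \cdots \to FX_i \xrightarrow{\eta(X_i)} X_i \to \cdots \to X_i)$ is precisely one of the ``shifted versions'' of $\Lpi$ you anticipate, with $X_i \twoheadrightarrow M_i$ chosen via the Assumption so that $\eta(X_i)$ is monic.
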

The reason for choosing this particular subcategory is because ${\rm im}\; \iota\subseteq \E$, a fact that
we will later need.
\begin{proof}
	Let $M^\bullet=FM_p\xrightarrow{f_1}M_1\xrightarrow{f_2}\dots\xrightarrow{f_{p}}M_p \in
	\A[\eta^{\frac{1}{p}}]$. For each $i=1,\dots,p$ let \[
		M_i^\bullet=FM_i\to FM_i\to\dots FM_i\xrightarrow{\eta(M_i)} M_i\to\dots \to M_i
	\] where the $\eta(M_i)$ is the $i$'th arrow from the right. 
	Note that we have a surjective map $\oplus M_i^\bullet\twoheadrightarrow M^\bullet$.
	Furthermore, by the assumption on $\A$, for all $i$, there exists an $X_i\in\A$ such
	that $X_i\twoheadrightarrow M_i$ and $\eta(X_i)$ is a monomorphism. Since $F$ is exact,
	$F(X_i)\twoheadrightarrow F(M_i)$ and so
	\[X_i^\bullet:=FX_i\to FX_i\to\dots\to FX_i\hookrightarrow X_i\to\dots\to X_i\twoheadrightarrow
	M_i^\bullet\] with $X_i^\bullet\in\E$. Thus we have $\oplus X_i^\bullet\twoheadrightarrow
	M^\bullet$ and we are done. 
	\end{proof}

\begin{lemma}
	$\Liota$ is exact on $\E$.
\end{lemma}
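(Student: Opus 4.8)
The plan is to show that $\Liota$ sends short exact sequences in $\E$ to short exact sequences in $\Ares\eta[0^{\frac{1}{p-1}}]$. Recall that $\Liota$ takes $M^\bullet = (FM_p \xrightarrow{f_1} M_1 \xrightarrow{f_2} \cdots \xrightarrow{f_p} M_p)$ to $(0 \to \cok f_1 \to \cok(f_1 f_2) \to \cdots \to \cok(f_1 \cdots f_{p-1}))$, so its exactness amounts to exactness of each of the cokernel functors $M^\bullet \mapsto \cok(f_1 \cdots f_d)$ on $\E$, for $d = 1, \ldots, p-1$. Since $\Liota$ is a left adjoint it is automatically right exact, so the only thing to prove is that it preserves monomorphisms. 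First I would take a monomorphism $\varphi \colon L^\bullet \hookrightarrow M^\bullet$ in $\A[\eta^{\frac{1}{p}}]$ with both $L^\bullet, M^\bullet \in \E$; this means each $\varphi_j \colon L_j \to M_j$ is a monomorphism in $\A$ (and $F\varphi_p$ is too, since $F$ is exact).

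Next I would fix $d \in \{1, \ldots, p-1\}$ and analyse the commutative square relating the composites $g_1 \cdots g_d \colon FL_p \to L_d$ and $f_1 \cdots f_d \colon FM_p \to M_d$ via $F\varphi_p$ on the left and $\varphi_d$ on the right. The key structural input is the hypothesis defining $\E$: $f_1 \cdots f_{p-1}$ is a monomorphism, hence so is every $f_1 \cdots f_d$ for $d \le p-1$ (it is a factor of the monomorphism $f_1 \cdots f_{p-1}$ on the left of $f_{d+1} \cdots f_{p-1}$, so it too must be mono), and likewise $g_1 \cdots g_d$ is a monomorphism. So I am in the situation of a commutative square of monomorphisms
\[
\xymatrix{
FL_p \ar@{^{(}->}[r]^{g_1 \cdots g_d} \ar@{^{(}->}[d]_{F\varphi_p} & L_d \ar@{^{(}->}[d]^{\varphi_d} \\
FM_p \ar@{^{(}->}[r]_{f_1 \cdots f_d} & M_d
}
\]
and I want to conclude that the induced map on cokernels $\cok(g_1 \cdots g_d) \to \cok(f_1 \cdots f_d)$ is a monomorphism. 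This is exactly the statement that the square is a pullback, equivalently (by the snake lemma applied to the two horizontal short exact sequences) that the map $\ker(\varphi_d \text{ on } \cok) \to \cdots$ vanishes; concretely, in an abelian category, a commutative square of monos $\begin{smallmatrix} A & \to & B \\ \downarrow & & \downarrow \\ C & \to & D\end{smallmatrix}$ induces a mono on cokernels if and only if $C \cap B = A$ inside $D$ — i.e. if the square is cartesian.

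The main obstacle is therefore establishing precisely this pullback/intersection property, and here the point is that it does \emph{not} hold for an arbitrary commutative square of monomorphisms — one genuinely needs the structure of $\A[\eta^{\frac{1}{p}}]$. The idea I would pursue is to use the full diagram $\varphi \colon L^\bullet \to M^\bullet$, not just the composite square: the map $f_1 \cdots f_d$ factors as $M_0 = FM_p \to M_1 \to \cdots \to M_d$ through the intermediate objects, and $\varphi$ is compatible with every stage. One shows by descending induction on $d$ (starting from $d = p$, where $f_1 \cdots f_p = \eta(M_p)$ and the relevant square is $\varphi$-compatible) or ascending induction, that the square formed by $f_1 \cdots f_d$ and $\varphi$ is cartesian, using at the inductive step that a composite of two pullback squares is a pullback, together with the fact that each individual square $\begin{smallmatrix} L_{j-1} & \to & L_j \\ \downarrow & & \downarrow \\ M_{j-1} & \to & M_j \end{smallmatrix}$ — although not itself a pullback in general — becomes controllable once one knows the horizontal maps are monos on the relevant range. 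Alternatively, and perhaps more cleanly, I would deduce it from the snake lemma: apply the snake lemma to the morphism of short exact sequences $0 \to FL_p \to L_d \to \cok(g_1\cdots g_d) \to 0$ and $0 \to FM_p \to M_d \to \cok(f_1 \cdots f_d) \to 0$; the connecting map lands in $\cok(F\varphi_p)$, and since $\cok(\Liota L^\bullet \to \Liota M^\bullet)$ in degree $d$ is $\cok(\varphi_d)$-related, exactness of the snake sequence plus the injectivity of $F\varphi_p$ forces the cokernel map to be mono. I expect verifying that the connecting homomorphism vanishes (equivalently pinning down the cartesian square) to be the crux, and I would handle it by exploiting that $\varphi$ is a morphism of the \emph{whole} grid object, so the obstruction that would appear for a generic square of monos is killed by the compatibility with the remaining arrows $f_{d+1}, \ldots, f_p$ and the cycle condition.
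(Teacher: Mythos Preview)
There is a genuine gap. Your reduction to ``$\Liota$ preserves monomorphisms'' is fine \emph{provided} you remember that you are dealing with a short exact sequence $0 \to L^\bullet \to M^\bullet \to N^\bullet \to 0$ with all three terms in $\E$; right exactness of $\Liota$ then leaves only injectivity of $\Liota L^\bullet \to \Liota M^\bullet$ to check. But you then drop the hypothesis $N^\bullet \in \E$ and try to prove the stronger statement that $\Liota$ preserves \emph{every} monomorphism between objects of $\E$. That statement is false. For instance, with $p=2$, $F=\id$, $\eta=0$, take $L^\bullet = (0 \to k \to 0)$ and $M^\bullet = (k \xrightarrow{\id} k \xrightarrow{0} k)$: both lie in $\E$, there is an evident monomorphism $L^\bullet \hookrightarrow M^\bullet$, yet $\Liota L^\bullet = k$ while $\Liota M^\bullet = 0$. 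The quotient $N^\bullet = (k \to 0 \to k)$ is not in $\E$, which is exactly what goes wrong. So neither the ``grid compatibility / cycle condition'' argument nor the vague appeal to injectivity of $F\varphi_p$ can possibly succeed: the obstruction you are trying to kill does not vanish in general.

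The paper's proof, and the correct version of your snake-lemma approach, uses $N^\bullet \in \E$ directly. With all three terms in $\E$, the three sequences
\[
0 \to FL_p \to L_d \to \cok(g_1\cdots g_d) \to 0, \quad 0 \to FM_p \to M_d \to \cok(f_1\cdots f_d) \to 0, \quad 0 \to FN_p \to N_d \to \cok(h_1\cdots h_d) \to 0
\]
are all short exact (left-exactness is precisely the $\E$-hypothesis on each term), and the two left columns are short exact. The snake lemma then gives exactness of the cokernel column immediately. In your two-row version, the snake lemma identifies the kernel of $\cok(g_1\cdots g_d) \to \cok(f_1\cdots f_d)$ with the kernel of $\cok(F\varphi_p) \to \cok(\varphi_d)$, i.e.\ of $h_1\cdots h_d \colon FN_p \to N_d$; this is a monomorphism \emph{because} $N^\bullet \in \E$, and that is the missing ingredient.
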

\begin{proof}
	Let $0\to (X,f)\to (Y,g)\to (Z,h)\to 0$ be an exact sequence in $\E$. For each
	$i=1,\dots,p-1$ we end up with the following commutative diagram where all rows and columns
	are exact
	\[\xymatrix{
		&0&0\\
		0\ar[r]& FZ_p\ar[u]\ar[r]&Z_i\ar[r]\ar[u]&\cok{h_1\dots h_i}\ar[r]&0 \\
		0\ar[r]& FY_p\ar[r]\ar[u]&Y_i\ar[u]\ar[r]&\cok{g_1\dots g_i}\ar[r]\ar[u]&0 \\ 
		0\ar[r]& FX_p\ar[r]\ar[u]&X_i\ar[r]\ar[u]&\cok{f_1\dots f_i}\ar[r]\ar[u]&0 \\
		&0\ar[u]&0\ar[u]
	}\] From the Snake Lemma we see that \[0\to \cok{f_1\dots f_i}
	\to \cok{g_1\dots g_i}\to \cok{h_1\dots h_i}\to 0\] is exact and so we are done.
\end{proof}

These two lemmas, together with Lemma~\ref{lem.exact_adjoints}, give us

\begin{proposition} \label{prop.Ext-iota}
Let $M \in \E$ and $N \in \Ares{\eta}[0^{\frac{1}{p-1}}]$. Then for any $n$ we have
\[ \Ext_{\Ares{\eta}[0^{\frac{1}{p-1}}]}^n( \Liota M, N ) = \Ext_{\A[\eta^{\frac{1}{p}}]}^n(M, \iota N). \]
In particular, for $M, N \in \Ares{\eta}[0^{\frac{1}{p-1}}]$ one obtains
\[ \Ext_{\A[\eta^{\frac{1}{p}}]}^n(\iota M, \iota N) = \Ext_{\Ares{\eta}[0^{\frac{1}{p-1}}]}^n( M, N ). \]
\end{proposition}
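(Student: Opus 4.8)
The plan is to deduce the first displayed isomorphism from Lemma~\ref{lem.exact_adjoints}, applied to the adjoint pair $(\Liota,\iota)$, together with the two preceding lemmas, and then to specialise to obtain the second isomorphism. The obstruction to applying Lemma~\ref{lem.exact_adjoints} directly is that $\Liota$ is not exact on all of $\A[\eta^{\frac{1}{p}}]$; this is precisely why the resolving subcategory $\E$ was introduced. So the first step is to set up the Yoneda-$\Ext$ computation restricted to $\E$: given $M\in\E$ and an extension $\mathbb{E}\in\Ext^n_{\A[\eta^{1/p}]}(M,\iota N)$, one represents $\mathbb{E}$ by an exact sequence that can be chosen to lie in $\E$ — here one uses Lemma~\ref{lemma:resolving}, that $\E$ is resolving, to guarantee that every class in $\Ext^n_{\A[\eta^{1/p}]}(M,-)$ is represented by an exact sequence all of whose terms (other than possibly the first) belong to $\E$; more precisely one needs that $\Ext^n$ computed via $\E$-resolutions agrees with the ambient $\Ext^n$, which is the standard consequence of $\E$ being resolving together with $\iota N\in\E$ (as remarked after Lemma~\ref{lemma:resolving}, $\operatorname{im}\iota\subseteq\E$).

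Having reduced to extensions inside $\E$, I would apply $\Liota$ termwise; since $\Liota$ is exact on $\E$ by the lemma just above, this sends an exact sequence in $\E$ to an exact sequence in $\Ares\eta[0^{\frac1{p-1}}]$, giving a well-defined map
\[
\Ext^n_{\A[\eta^{1/p}]}(M,\iota N)\longrightarrow \Ext^n_{\Ares\eta[0^{1/(p-1)}]}(\Liota M,N),
\]
using the counit $\Liota\iota N\to N$ (which is an isomorphism since $\iota$ is fully faithful, being part of the recollement) to land in the correct group. In the reverse direction one uses the unit $M\to\iota\Liota M$ and pulls back. The verification that these two maps are mutually inverse is, mutatis mutandis, the argument already carried out in the proof of Lemma~\ref{lem.exact_adjoints}: the only inputs are exactness of the two functors on the relevant subcategories (now available), naturality of unit and counit, and the triangle identity; functoriality in $M$ and $N$ is likewise inherited from that proof. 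I would phrase this as: the argument of Lemma~\ref{lem.exact_adjoints} goes through verbatim once one works inside $\E$, because all the exactness hypotheses used there hold on $\E$.

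For the second displayed isomorphism, I would simply observe that $\operatorname{im}\iota\subseteq\E$, so for $M,N\in\Ares\eta[0^{1/(p-1)}]$ we may take $\iota M$ in place of $M$ in the first isomorphism, obtaining
\[
\Ext^n_{\A[\eta^{1/p}]}(\iota M,\iota N)=\Ext^n_{\Ares\eta[0^{1/(p-1)}]}(\Liota\iota M,N),
\]
and then use once more that $\Liota\iota\simeq\id$ (fully faithfulness of $\iota$) to rewrite the right-hand side as $\Ext^n_{\Ares\eta[0^{1/(p-1)}]}(M,N)$. The main obstacle is the first step: making precise that a resolving subcategory computes the correct $\Ext$ and that one may therefore assume the representing sequences lie in $\E$; once that is granted, everything else is a transcription of the proof of Lemma~\ref{lem.exact_adjoints}.
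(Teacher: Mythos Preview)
Your proposal is correct and follows essentially the same approach as the paper. The only packaging difference is that the paper applies Lemma~\ref{lem.exact_adjoints} directly to the exact categories $\E$ and $\Ares\eta[0^{\frac{1}{p-1}}]$ (obtaining $\Ext^n_{\E}(M,\iota N)=\Ext^n_{\Ares\eta[0^{1/(p-1)}]}(\Liota M,N)$) and then invokes the standard fact that $\Ext^n_{\E}=\Ext^n_{\A[\eta^{1/p}]}$ for a resolving subcategory, whereas you merge these two steps by first representing ambient extensions inside $\E$ and then re-running the construction of Lemma~\ref{lem.exact_adjoints}; both routes use exactly the same ingredients.
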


\begin{proof}
By the two lemmas above, we know that $\Liota$ and $\iota$ form a pair of exact adjoint functors between the exact categories $\E$ and $\Ares{\eta}[0^{\frac{1}{p-1}}]$. Thus it follows from Lemma~\ref{lem.exact_adjoints} that
\[ \Ext_{\Ares{\eta}[0^{\frac{1}{p-1}}]}^n( \Liota M, N ) = \Ext_{\E}^n(M, \iota N). \]
Now, since $\E$ is resolving in $\A[\eta^{\frac{1}{p}}]$, we have
\[ \Ext_{\E}^n(X, Y) = \Ext_{\A[\eta^{\frac{1}{p}}]}^n(X, Y) \]
for $X, Y \in \E$. In particular
\[ \Ext_{\Ares{\eta}[0^{\frac{1}{p-1}}]}^n( \Liota M, N ) = \Ext_{\A[\eta^{\frac{1}{p}}]}^n(M, \iota N). \]

The ``In particular" part now follows, since
\[ \Ext_{\A[\eta^{\frac{1}{p}}]}^n(\iota M, \iota N) = \Ext_{\Ares{\eta}[0^{\frac{1}{p-1}}]}^n( \underbrace{\Liota \iota M}_{= M}, N ). \]
\end{proof}

\begin{proposition}\label{prop:tilting}
Suppose $T$ is a cotilting object in
$\Ares\eta[0^{\frac{1}{p-1}}]$ and $U$ is cotilting in $\A$.
Then $E=\iota T\oplus \Rpi U$ is cotilting in $\A[\eta^{\frac{1}{p}}]$  if and only if $\Ext^i({\Rpi}U,\iota T)=0$ for all $i> 0$.
\end{proposition}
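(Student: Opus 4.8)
The plan is to analyze cotilting in terms of the recollement of Proposition~\ref{prop.recollement}, using the $\Ext$-compatibility results just established. First I would prove rigidity of $E = \iota T \oplus \Rpi U$. The summand $\Ext^i(\iota T, \iota T)$ vanishes for $i>0$ by Proposition~\ref{prop.Ext-iota} (using that $T$ is rigid in $\Ares\eta[0^{\frac 1{p-1}}]$). The summand $\Ext^i(\Rpi U, \Rpi U)$ vanishes for $i>0$ since $\pi$ is exact and $\pi\Rpi \simeq \id$, so by Lemma~\ref{lem.exact_adjoints} (applied to the adjoint pair $(\pi,\Rpi)$) it equals $\Ext^i(\pi\Rpi U, U) = \Ext^i(U,U) = 0$. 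The summand $\Ext^i(\iota T, \Rpi U)$: again $\pi$ is exact and $\pi\iota = 0$ (since $\mathrm{im}\,\iota = \ker\pi$), so this is $\Ext^i(\pi\iota T, U) = \Ext^i(0, U) = 0$. Thus $E$ is rigid if and only if the remaining cross term $\Ext^i(\Rpi U, \iota T)$ vanishes for $i>0$, which is exactly the stated hypothesis. This already establishes that the condition is necessary (rigidity is part of cotilting) and that, assuming it, $E$ is rigid.

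Next I would prove cogeneration under the hypothesis. Let $M \in \A[\eta^{\frac 1p}]$ with $\Ext^i(M, E) = 0$ for all $i \ge 0$; we must show $M = 0$. From $\Ext^i(M, \Rpi U) = 0$ and Lemma~\ref{lem.exact_adjoints} for $(\pi, \Rpi)$ we get $\Ext^i(\pi M, U) = 0$ for all $i \ge 0$, so $\pi M = 0$ because $U$ cogenerates $\A$. Since $\mathrm{im}\,\iota = \ker\pi$, this means $M \simeq \iota M'$ for some $M' \in \Ares\eta[0^{\frac 1{p-1}}]$. Now I want to feed this into the $\iota$-side. The catch is that Proposition~\ref{prop.Ext-iota} compares $\Ext$ in $\A[\eta^{\frac 1p}]$ with $\Ext$ in $\Ares\eta[0^{\frac 1{p-1}}]$ only through the exact adjunction on $\E$, and $\mathrm{im}\,\iota \subseteq \E$, so for $M = \iota M'$ and $N = \iota T$ we do have $\Ext^i(\iota M', \iota T) = \Ext^i_{\Ares\eta[0^{\frac 1{p-1}}]}(M', T)$ for all $i$. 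Hence $\Ext^i_{\Ares\eta[0^{\frac 1{p-1}}]}(M', T) = 0$ for all $i \ge 0$, and since $T$ cogenerates $\Ares\eta[0^{\frac 1{p-1}}]$, we conclude $M' = 0$, so $M = 0$. This completes cogeneration.

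I expect the main obstacle to be the bookkeeping around exactness of $\iota$-related functors and the precise categories in which the $\Ext$-groups live: one must be careful that $M = \iota M'$ genuinely lands in $\E$ so that Proposition~\ref{prop.Ext-iota} applies, and that the vanishing $\Ext^i(M,\Rpi U)=0$ for \emph{all} $i\ge 0$ (not just $i>0$) is available to force $\pi M = 0$ — this is why the hypothesis of the proposition is phrased with $\Ext^i$ for $i>0$ only on the cross term, while cogeneration supplies the $i=0$ information for free. A secondary point is verifying the vanishing of the two ``easy'' cross/diagonal $\Ext$-terms via $\pi\iota = 0$ and $\pi\Rpi \simeq \id$; these follow formally from the recollement axioms and Lemma~\ref{lem.exact_adjoints}, but should be spelled out. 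Once these compatibilities are in hand, both directions of the ``if and only if'' drop out: necessity from rigidity of a cotilting object, sufficiency from the rigidity and cogeneration arguments above.
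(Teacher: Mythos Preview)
Your proposal is correct and follows essentially the same route as the paper: rigidity is checked summand-by-summand using the adjunctions $(\pi,\Rpi)$ and $(\Liota,\iota)$ together with $\pi\iota=0$ and $\pi\Rpi\simeq\id$, and cogeneration is obtained by first forcing $\pi M=0$ via $U$, then writing $M\simeq\iota M'$ and using Proposition~\ref{prop.Ext-iota} with the cotilting property of $T$. You are in fact slightly more explicit than the paper in spelling out the ``only if'' direction and in noting that $\iota M'\in\E$ so that Proposition~\ref{prop.Ext-iota} applies.
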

\begin{proof}
	 Cogeneration: Suppose $\Ext^i(M,E)=0$ for all  $i\geq 0$. Then
		$0=\Ext^i(M,\Rpi U)=\Ext^i(\pi M,U)$ and since $U$ is cogenerating this implies $\pi M=0$. In this case
		$M\simeq \iota N$ for some $N$. But then $0=\Ext^i(\iota N, \iota T)=\Ext^i({\Liota}\iota N, T)=\Ext^i(N, T)$ implies,
		since $T$ is cogenerating, that $N=0$, and so $M=0$.
 
		Rigidity: For $i>0$ we have $\Ext^i(\iota T, \iota T)=\Ext^i({\Liota}\iota T,T)=\Ext^i(T,T)=0$. 
	Also $\Ext^i(\Rpi U,{\Rpi}U)=\Ext^i(\pi \Rpi U,U)=\Ext^i(U,U)=0$
	and $\Ext^i(\iota T, \Rpi U)=\Ext^i(\pi \iota T, U)=\Ext^i(0, U)=0$. Finally, by assumption we have $\Ext^i(\Rpi U, \iota T)=0$ and so we
	are done.
\end{proof}

We now analyse this condition further. We define, an exact functor
\begin{align*}
	\Delta\colon \Ares{\eta}&\longrightarrow \Ares\eta[0^{\frac{1}{p-1}}]\\
	M&\longmapsto (0\to M\to M\to\dots\to M)
\end{align*}
which has a left and right adjoints
\begin{align*}
\LDelta(0\to M_1\to\dots\to
M_{p-1})&=M_{p-1}\\
	\RDelta(0\to M_1\to\dots\to
M_{p-1})&=M_1
\end{align*}

\begin{proposition}\label{prop:condition}
	Let $N\in\Ares\eta[0^{\frac{1}{p-1}}]$ and $M\in \A$. Then $\Ext^i(\Rpi M,\iota N)=0$ for all $i>0$ if:
	\begin{itemize}
		\item $\Ext^{i}(\Delta\ker\eta(M),N)=0$
			for all $i\ge 0$ and
		\item $\Ext^i(\Delta\cok\eta(M),N)=0$ for all $i>0$.
	\end{itemize}
\end{proposition}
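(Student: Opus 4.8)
The plan is to understand the object $\Rpi M$ concretely and find a short resolution of it by objects on which the relevant $\Ext$-computations reduce to the $\A$-level via the recollement. Recall that $\Rpi M$ is the sequence $(FM \xrightarrow{\eta(M)} M \xrightarrow{\id} M \to \dots \to M)$. Writing $K = \ker \eta(M)$ and $C = \cok \eta(M)$, I would first observe that there is a short exact sequence in $\A[\eta^{\frac{1}{p}}]$ relating $\Rpi M$ to $\Rpi$ of the image of $\eta(M)$ and to an object supported on the left-most arrow; more precisely, I expect a short exact sequence
\[ 0 \to \iota(\text{something built from }K) \to \Rpi M \to \Rpi(\operatorname{im}\eta(M)) \to 0 \]
or dually one involving $C$. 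The point is that $\eta(\operatorname{im}\eta(M))$ is a monomorphism, so $\Rpi(\operatorname{im}\eta(M))$ lies in the subcategory $\E$ of Lemma~\ref{lemma:resolving}, and hence (by Proposition~\ref{prop.Ext-iota} applied after noting $\Liota$ kills it appropriately, or directly because $\Rpi$ of a mono-$\eta$ object is $\pi$-local) its $\Ext$'s into $\iota N$ vanish.

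Concretely, here are the steps I would carry out. Step 1: decompose $\Rpi M$ using the factorization $FM \twoheadrightarrow \operatorname{im}\eta(M) \hookrightarrow M$. This gives a subobject $(FM \to \operatorname{im}\eta(M) \to 0 \to \dots \to 0)$-type piece and quotients; tracking the cycle condition carefully, I expect to land on a two- or three-term filtration of $\Rpi M$ whose subquotients are: (a) $\iota$ of an object in $\Ares{\eta}[0^{\frac{1}{p-1}}]$ built from $K$, which up to the functor $\Delta$ and its adjoints contributes exactly the term $\Delta K$; (b) an object in $\E$ on which $\Ext^{\ge 0}(-,\iota N)$ or at least $\Ext^{>0}(-,\iota N)$ can be controlled; and (c) $\iota$ of an object built from $C$, contributing $\Delta C$ in positive degrees. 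Step 2: apply $\Ext^i(-,\iota N)$ to the filtration and use the long exact sequence, reducing everything to the hypotheses $\Ext^{\ge 0}(\Delta K, N) = 0$ and $\Ext^{>0}(\Delta C, N)=0$ together with Proposition~\ref{prop.Ext-iota} to pass $\Ext(\iota(-),\iota N)$ down to $\Ext(-,N)$ in $\Ares{\eta}[0^{\frac{1}{p-1}}]$. Step 3: identify the $K$- and $C$-built objects in $\Ares{\eta}[0^{\frac{1}{p-1}}]$ with $\Delta K$ and $\Delta C$ (or objects with the same $\Ext$-behaviour against $N$), using that $\Delta$ is exact with both adjoints and that the relevant sequence $(0 \to M' \to M' \to \dots \to M')$ is precisely the shape appearing.

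The asymmetry between the two hypotheses — $\Ext^{\ge 0}$ for the kernel versus $\Ext^{>0}$ for the cokernel — is the clue to which term sits in degree $0$ of the filtration versus which only interacts in positive degrees, and getting this bookkeeping exactly right is where I expect the real work to be. In particular the kernel piece genuinely appears as a subobject of $\Rpi M$ that is \emph{not} in $\E$, so one cannot avoid a contribution in degree $0$, which forces the $\Ext^{\ge 0}(\Delta K, N) = 0$ hypothesis; whereas the cokernel only shows up after taking $\Ext^1$ of the short exact sequence $0 \to \operatorname{im}\eta(M) \to M \to C \to 0$, so $\Ext^{>0}$ suffices there. The main obstacle, then, is Step 1: writing down the correct filtration of $\Rpi M$ with the claimed subquotients, and verifying the middle term truly lies in $\E$ so that its contribution vanishes — after that the $\Ext$ long exact sequence and the earlier propositions do the rest essentially formally.
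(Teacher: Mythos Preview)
Your plan has the right shape --- a four-term decomposition with $\iota\Delta K$ and $\iota\Delta C$ at the ends --- but the middle object is wrong, and this is where the argument breaks. You propose $\Rpi(\operatorname{im}\eta(M))$ as the object with vanishing $\Ext$ into $\iota N$, justified either by membership in $\E$ or by being ``$\pi$-local''. Neither works. Membership in $\E$ only says that $\Liota$ is exact there, so that Proposition~\ref{prop.Ext-iota} applies; but $\Liota\Rpi(\operatorname{im}\eta(M)) = \Delta\cok\eta(\operatorname{im}\eta(M))$, which is typically nonzero, so the $\Ext$ does not vanish. And being in the image of $\Rpi$ gives the adjunction $\Ext(-,\Rpi(-)) \cong \Ext(\pi(-),-)$, which is the wrong variable for killing $\Ext(-,\iota N)$. (Incidentally, $\eta(\operatorname{im}\eta(M))$ need not be mono either: its kernel is $K \cap F(\operatorname{im}\eta(M))$.) So no subquotient of $\Rpi M$ will serve as your ``middle term with vanishing $\Ext$''.

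The object you actually want is $\Lpi M$, which is not a subobject of $\Rpi M$ at all. There is a canonical map $\Lpi M \to \Rpi M$ (the identity at position $p$, and $\eta(M)$ at positions $1,\ldots,p-1$), and one checks directly that its kernel and cokernel are precisely $\iota\Delta K$ and $\iota\Delta C$. The vanishing you need is then immediate from the \emph{left} adjunction: $\Ext^i(\Lpi M, \iota N) = \Ext^i(M, \pi\iota N) = 0$ for all $i \ge 0$. Splitting the resulting four-term exact sequence into two short exact sequences and running the long exact $\Ext$-sequences shows $\Ext^i(\Rpi M, \iota N)$ is sandwiched between $\Ext^i(\Delta C, N)$ and $\Ext^{i-1}(\Delta K, N)$, which is exactly the degree asymmetry you observed between the two hypotheses.
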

\begin{proof}
	
We have the following exact sequence
\[0\to\iota\Delta\ker\eta(M)\to {\Lpi}M\to \Rpi M\to \iota \Delta \cok \eta(M)\to 0\]
which we break up as follows:
	\[\begin{array}{c}
		0\to\iota\Delta\ker\eta(M)\to {\Lpi}M\to C\to 0\\
		0\to C\to \Rpi M\to \iota \Delta \cok \eta(M)\to 0
\end{array}\]
Since for all $i\geq 0$ \[\Ext^i({\Lpi}M,\iota N)=
\Ext^i(M, \pi \iota N)=\Ext^i(M,0)=0\]
the first sequence implies that
\[ \Ext^i(C, \iota N) = \Ext^{i-1}( \iota \Delta \ker \eta(M), \iota N)
\overset{\text{Prop~\ref{prop.Ext-iota}}}{=} \Ext^{i-1}(\Delta \ker \eta(M), N). \]
Inserting this in the long exact sequence obtained from the second short exact sequence above, we obtain
\[\dots\to \underbrace{\Ext^i(\iota \Delta\cok \eta(M),\iota N)}_{= \Ext^i(\Delta\cok \eta(M), N)} \to\Ext^i(\Rpi
	M,\iota N)\to \underbrace{\Ext^i(C,\iota N)}_{\mathclap{= \Ext^{i-1}(\Delta \ker \eta(M), N)}} \to \dots, \]

from which the proposition follows.
\end{proof}

\begin{theorem}\label{thm:main}
	Suppose $T$ is cotilting in $\Ares\eta$ and $U$ is cotilting in $\A$. If
	\begin{enumerate}
		\item  $\eta(U)$ is injective, and
		\item $\Ext^i_{\Ares\eta}(\cok\eta(U),T)=0$ for all $i>0$.
	\end{enumerate}Then $\iota \delta(T)\oplus \Rpi U$ is cotilting $\A[\eta ^{\frac{1}{p}}]$.
\end{theorem}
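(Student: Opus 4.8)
The plan is to chain together Propositions~\ref{prop:tiltingobject}, \ref{prop:tilting} and \ref{prop:condition} with Lemma~\ref{lem.exact_adjoints}, using the two hypotheses only at the very end.

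First I would note that $\Ares\eta$ is itself an abelian category: by naturality of $\eta$ --- applied to the inclusion of a kernel and to the projection onto a cokernel, using that the exact functor $F$ preserves epimorphisms --- the subcategory $\Ares\eta$ is closed under the kernels and cokernels formed in $\A$. Since moreover $\delta\colon\Ares\eta\to\Ares\eta[0^{\frac{1}{p-1}}]$ and its left adjoint $\Ldelta$ are exact, Proposition~\ref{prop:tiltingobject} applies with $\A$ replaced by $\Ares\eta$ and $p$ by $p-1$, and shows that $\delta(T)$ is cotilting in $\Ares\eta[0^{\frac{1}{p-1}}]$. This is exactly the input required by Proposition~\ref{prop:tilting}, which, applied with $\delta(T)$ in the role of its ``$T$'', reduces the statement to verifying the single condition $\Ext^i(\Rpi U, \iota\delta(T)) = 0$ for all $i > 0$.

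To establish this vanishing I would invoke Proposition~\ref{prop:condition} with $M = U$ and $N = \delta(T)$, reducing it to showing $\Ext^i(\Delta\ker\eta(U), \delta(T)) = 0$ for $i \ge 0$ and $\Ext^i(\Delta\cok\eta(U), \delta(T)) = 0$ for $i > 0$. The first is vacuous: hypothesis~(1) makes $\eta(U)$ a monomorphism, so $\ker\eta(U) = 0$. For the second, set $C = \cok\eta(U)$; applying the naturality square of $\eta$ to the canonical epimorphism $U \twoheadrightarrow C$ and using that $F$ preserves epimorphisms forces $\eta(C) = 0$, so $C \in \Ares\eta$ and $\Delta C$ is defined. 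The functor $\Delta$ is exact with exact right adjoint $\RDelta$ (evaluation at the first vertex is exact, since kernels and cokernels in $\Ares\eta[0^{\frac{1}{p-1}}]$ are computed componentwise), so Lemma~\ref{lem.exact_adjoints} gives $\Ext^i_{\Ares\eta[0^{\frac{1}{p-1}}]}(\Delta C, \delta(T)) \simeq \Ext^i_{\Ares\eta}(C, \RDelta\delta(T))$. Each interval summand of $\delta(T)$ contains the first vertex in its support, so $\RDelta\delta(T) \cong T^{\oplus(p-1)}$, whence the right-hand side equals $\Ext^i_{\Ares\eta}(\cok\eta(U), T)^{\oplus(p-1)}$, which vanishes for $i > 0$ by hypothesis~(2).

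Most of this is a routine assembly of the earlier results; the step that needs the most care is the last one, namely checking that $\Delta$ and $\RDelta$ genuinely form an exact adjoint pair between the relevant exact categories --- so that Lemma~\ref{lem.exact_adjoints} is applicable --- which rests on the componentwise description of the abelian structure of $\Ares\eta[0^{\frac{1}{p-1}}]$, together with the explicit identification of $\RDelta\delta(T)$ as a power of $T$.
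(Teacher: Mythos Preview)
Your proof is correct and follows exactly the same route as the paper: reduce via Proposition~\ref{prop:tiltingobject} and Proposition~\ref{prop:tilting} to the single vanishing $\Ext^i(\Rpi U,\iota\delta(T))=0$, then apply Proposition~\ref{prop:condition} with $M=U$, $N=\delta(T)$, and finish with the adjunction $(\Delta,\RDelta)$ together with Lemma~\ref{lem.exact_adjoints}. You supply more detail than the paper (that $\Ares\eta$ is abelian, that $\cok\eta(U)\in\Ares\eta$, and the exactness of the $(\Delta,\RDelta)$ pair), and your identification $\RDelta\delta(T)\cong T^{\oplus(p-1)}$ is in fact more precise than the paper's, which writes simply $T$; either way only the vanishing matters.
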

\begin{proof}
	From (\ref{prop:tilting}) we require $\Ext^i(\Rpi U,\iota T)=0$. Now apply (\ref{prop:condition}) with $M=U$
	and $N=\delta(T)$ from (\ref{prop:tiltingobject}). The theorem then follows from the fact that, for all $i\geq 0$ and $M\in \Ares\eta$:
	\[\Ext^i(\Delta M, \delta(T))=\Ext^i(M,{\RDelta}\delta( T))=\Ext^i_{\Ares\eta}(M, T).\qedhere\]
\end{proof}

\section{Cotilting in the general case}\label{sec:general}
In this section we turn our attention to the more general category \[\AAA.\] We will give a criterion for this category to have a cotilting
object.

For each
$I\subseteq \{1,\dots,n\}$, we define the following full subcategory of $\A$:
\[\Ares{I}:=\left\{ M\in\A\mid \eta_i(M)=0\; \text{for all} \; i\in I\right\}.\]
Furthermore, assume that each such $\Ares{I}$ has a cotilting object $T_I$. In 
particular $T_{\emptyset}$ is a cotilting object in $\A$.

Before we proceed, we need to introduce several new categories, just
as we did in Section \ref{1weight}
whose cotilting objects will be used to construct the cotilting object we are seeking.

For $H,I,J\subseteq \{1,\dots,n\}$ with $H=\{a_1\dots a_m\}\subseteq I$  and 
$J=\left\{ b_1,\dots, b_\ell \right\}$ with $J\cap I=\emptyset$ let
\[\Ares{I}[\eta^J,0^{H}]:=\Ares{I}[\eta_{b_1}^{\frac{1}{p_{b_1}}},
	\dots,\eta_{b_\ell}^{\frac{1}{p_{b_\ell}}},0^{\frac{1}{p_{a_1}-1}},\dots,
0^{\frac{1}{p_{a_m}-1}}].\]

We have, for any $K=\{c_1,\dots,c_i\}\subseteq\{1,\dots,n\}$ satisfying $K\cap(I\cup J)=\emptyset$ a restriction functor 
\begin{align*}
	\mid_K&\colon\Ares{I}[\eta^J,0^{H}]\to \Ares{I\cup K}[\eta^J,0^{H}]\\
	M|_K&:=\cok\eta_{c_1}(\cok\eta_{c_2}(\dots\cok\eta_{c_i}(M)))
\end{align*} which is well defined since the $F_i$ commute.

If either $J$ or $H$ are empty, we leave them
out from the notation.
The category $\Ares{I}[\eta^\emptyset, 0^H]=\Ares{I}[0^H]$ has a special tilting object $T_I^H$
constructed iteratively from $T_I$ as we saw in Section \ref{1weight}. Note that $T_I^{\emptyset}=T_I$.

Let $H,I,J$ be as above, and $a,b\in H$. We have the following diagram where every row and column is a recollement:
\[\xymatrix{
	\Ares{I\setminus \{b\}}[ \eta^J,0^{H\setminus \{b\}} ]\ar[r]^{\iota^a}&
	\Ares{I\setminus \{a,b\}}[ \eta^{J\cup \{a\}},0^{H\setminus\{a,b\}} ] \ar[r]^{\pi^a}&
	\Ares{I\setminus \{a,b\}}[ \eta^J,0^{H\setminus \{a,b\}}]\\
	\Ares{I\setminus \{b\}}[ \eta^{J\cup \{b\}},0^{H\setminus\{b\}} ]\ar[u]^{\pi^b}\ar[r]^{\iota^a}&
	\Ares{I\setminus \{a,b\}}[ \eta^{J\cup \{a,b\}},0^{H\setminus\{a,b\}} \ar[r]^{\pi^a}]\ar[u]^{\pi^b}&
	\Ares{I\setminus \{a,b\}}[ \eta^{J\cup \{b\}},0^{H\setminus \{a, b\}}] \ar[u]^{\pi^b}\\
	\Ares{I}[\eta^J,0^{H}]\ar[r]^{\iota^a}\ar[u]^{\iota^b}&
	\Ares{I\setminus \{a\}}[ \eta^{J\cup \{a\}},0^{H\setminus\{a\}} ] \ar[r]^{\pi^a}\ar[u]^{\iota^b}&
	\Ares{I\setminus\{a\}}[ \eta^J,0^{H\setminus \{a\}}]\ar[u]^{\iota^b}
}\]
\begin{remark}
Note that we have abused notation slightly by calling many different functors $\iota^a$. However, no confusion should arise for 
they all have different domains and codomains and the correct one is hence clear from context. The same applies to $\pi^a$ and
$\Rpi^a$ as well.
\end{remark}


	\begin{lemma} \label{commute}
In the diagram of recollements above, all the squares (including original functors and adjoint functors) commute, except $\Riota$ and $\Liota$. In particular we have the following three equalities which we will use later:
\begin{enumerate}
\item $\iota^a \iota^b = \iota^b \iota^a$ (i.e.\ the left lower square of the diagram commutes);
\item $\iota^a \Rpi^b = \Rpi^b \iota^a$;
\item $\Rpi^a \Rpi^b = \Rpi^b \Rpi^a$. 
\end{enumerate} 
\end{lemma}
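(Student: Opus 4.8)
The plan is to reduce everything to explicit descriptions of the functors involved and then verify the three claimed commutativities directly. First I would fix the cleanest possible model for the objects of $\Ares{I}[\eta^J, 0^H]$: an object is a grid indexed by the coordinates in $J$ (of ``cycle'' type, so length $p_b$ in direction $b\in J$) and in $H$ (of ``linear $A$'' type, length $p_a-1$ in direction $a\in H$), with all entries lying in $\Ares{I}$. With this in hand, each of $\iota^a$, $\pi^a$, $\Rpi^a$ for $a\in H$ is, up to the recollement of Proposition~\ref{prop.recollement} applied ``in the $a$-direction'', one of the three formulas written there: $\pi^a$ reads off the last slice in the $a$-direction (and forgets that direction), $\Rpi^a$ inserts a new $a$-direction as $(FM\xrightarrow{\eta_a(M)}M\xrightarrow{\id}M\to\cdots\to M)$, and $\iota^a$ inserts a new $a$-direction by padding with a zero. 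The key structural point is that all three of these operations act on \emph{disjoint sets of coordinate directions} when we compare a superscript-$a$ functor with a superscript-$b$ functor for $a\ne b$: one manipulates the $a$-slices, the other the $b$-slices, and the entries live in a fixed ambient category $\A$ on which nothing else happens.

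Concretely, for (1) I would take an object $M$ of $\Ares{I}[\eta^J, 0^H]$, apply $\iota^b$ to get a grid with a new zero-padded $b$-direction, then apply $\iota^a$ to zero-pad a new $a$-direction; comparing with the other order, both produce the grid whose $(\ldots, a\text{-slice}, \ldots, b\text{-slice}, \ldots)$ entry is $M_{\ldots}$ when both new indices are in the ``interior'' range and $0$ (or the appropriate $F$-twist, which is forced to be $0$ since we zero-padded) otherwise — these agree entry by entry, and the structure maps agree because in each direction one of the two insertions contributes the trivial (identity or zero) maps. Statement (3) is handled the same way: $\Rpi^a$ and $\Rpi^b$ each insert a slice filled with copies of the relevant entry and with structure maps given by $\id$ or $\eta$; since $\eta_a$ and $\eta_b$ commute (they are among the commuting $\eta_i$ of the setup, and the $F_i$ commute), the iterated insertions $\Rpi^a\Rpi^b$ and $\Rpi^b\Rpi^a$ produce literally the same grid. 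Statement (2) is the mixed case: $\iota^a$ zero-pads the $a$-direction while $\Rpi^b$ fills the $b$-direction with copies; these operate on different directions and commute for the same reason — applying one does not touch the data the other uses.

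For the remaining squares of the diagram (beyond the three highlighted ones, but still asserting commutativity), the same principle applies: any square built from $\iota$, $\pi$, $\Rpi$, $\Lpi$ in two different superscripts $a,b$ commutes because the $a$-operations and $b$-operations are independent. I would remark that the reason $\Liota$ and $\Riota$ fail to commute with the other-superscript functors is exactly that $\Liota^a$ (cokernels along the $a$-direction) and $\Riota^a$ (kernels along the $a$-direction) are \emph{not} exact, so interchanging them with, say, $\pi^b$ — which picks out a slice — need not commute, since forming a cokernel in the $a$-direction and then restricting to a $b$-slice is not the same as restricting first and then taking the cokernel, unless the relevant maps stay epi/mono. (This is not needed for the lemma; I would state it only to justify the ``except $\Riota$ and $\Liota$''.)

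\textbf{Main obstacle.} The genuine work is purely bookkeeping: setting up notation for multi-directional grids so that ``apply the one-weight recollement in direction $a$'' is unambiguous, and checking that the zero-padding and copy-filling prescriptions commute at the level of \emph{structure maps} (the $f^i_{\a}$), not just objects. There is no hard idea — the commuting of the $F_i$ and the explicit formulas of Proposition~\ref{prop.recollement} do all the heavy lifting — but one must be careful about the convention for non-positive indices ($M_{\a}:=F_iM_{\a+p_i\e_i}$) when a newly inserted slice wraps around. I expect the cleanest writeup is to verify (1), (2), (3) on the nose from the formulas and then say the rest of the diagram ``is checked analogously.''
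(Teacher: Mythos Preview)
Your proposal is correct and is exactly the approach the paper takes: the paper's entire proof is ``This is a simple, straight forward calculation,'' and what you have written is precisely that calculation spelled out in terms of the explicit formulas from Proposition~\ref{prop.recollement}. One small caveat: your parenthetical explanation of why $\Liota$ and $\Riota$ fail to commute is not quite right --- $\pi^b$ is exact and cokernels in these grid categories are computed entrywise, so ``restrict to a $b$-slice, then take $a$-cokernels'' does coincide with the other order --- but since you correctly flag this remark as inessential to the lemma, this does not affect the validity of the proof.
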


\begin{proof}
	This is a simple, straight forward calculation.
\end{proof}
In light of this lemma we define, for $H=\left\{ a_1,\dots,a_h \right\}\subseteq \{1,\dots,n\}$ and an object $M$ in an appropriate category
\[\iota^HM:=\iota^{a_1}\circ\dots\circ \iota^{a_h}(M).\] Similarly we define $\Rpi^H$ and $\pi^H$.

Similarly to the case with only one weight, we need to control how the adjoint pair
$(\iota^{a}_\lambda, \iota^a)$ behave with respect to $\Ext$. We therefore need a more general version of the assumption used earlier: 

\begin{assumption}\label{generalassumption}
From now on, assume that  for	all $I\subseteq \{1,\dots,n\}$ and $i\in \{1,\dots,n\}\setminus I$ the category $\A_I$ has enough objects $M$ such that $\eta_i(M)$ is a monomorphism.

\end{assumption}
\begin{lemma}
	Let $H,I,J\subseteq \{1,\dots,n\}$ with $H\subseteq I$ and $J\cap I =\emptyset$ and
	$a\in H$. Suppose, $M,N\in \Ares{I}[\eta^J,0^{H}]$. We have, for all $i\ge0$ :
	\[\Ext^i(\iota^aM, \iota^a N)=\Ext^i(M,N)\]
\end{lemma}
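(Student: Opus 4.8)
The plan is to reduce the general statement to the one-weight case treated in Proposition~\ref{prop.Ext-iota}. The key observation is that the functors $\iota^a$ and its left adjoint $\iota^a_\lambda$ in the diagram of recollements are, up to the passage through the categories $\Ares{I'}[\eta^{J'},0^{H'}]$, literally the same functors $\iota$ and $\Liota$ that were studied in Section~\ref{1weight}, only now the ``ambient'' abelian category carries additional grid structure in the other directions. So first I would spell out carefully that the recollement
\[
\xymatrix@C=40pt{
\Ares{I\setminus\{a\}}[\eta^J,0^{H\setminus\{a\}}] \ar[r]^-{\iota^a} & \Ares{I}[\eta^J,0^H] \ar[r]^-{\pi^a} & \cdots
}
\]
is obtained by applying the construction of Proposition~\ref{prop.recollement} with $\A$ replaced by the category $\Ares{I\setminus\{a\}}[\eta^{J},0^{H\setminus\{a\}}]$ viewed as an abelian category equipped with the single endofunctor $F_a$, natural transformation $\eta_a$, and weight $p_a$ (this is precisely the meaning of the extra ``$0^{\{a\}}$'' slot appearing after restriction). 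Under this identification $\iota^a$ becomes the functor $\iota$ and $\iota^a_\lambda$ becomes $\Liota$ of that section.

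Next I would invoke the machinery already in place. By Assumption~\ref{generalassumption}, the category $\Ares{I\setminus\{a\}}[\eta^{J},0^{H\setminus\{a\}}]$ — or rather the relevant $\A_{I'}$ sitting underneath it — has enough objects on which $\eta_a$ is monomorphic, which is exactly the hypothesis needed for Lemma~\ref{lemma:resolving} and the subsequent lemma asserting that $\Liota$ is exact on the resolving subcategory $\E$. Hence Proposition~\ref{prop.Ext-iota} applies verbatim in this setting, and its ``in particular'' clause gives
\[
\Ext^i(\iota^a M, \iota^a N) = \Ext^i(M,N)
\]
for $M,N$ in the source category, for all $i\ge 0$, which is exactly what is claimed.

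The one point that genuinely needs care — and which I expect to be the main obstacle — is checking that the hypotheses of Section~\ref{1weight} really do transfer: namely that $\Ares{I\setminus\{a\}}[\eta^{J},0^{H\setminus\{a\}}]$ together with $F_a,\eta_a,p_a$ satisfies ``enough objects $M$ with $\eta_a(M)$ monomorphic.'' Assumption~\ref{generalassumption} guarantees this for the plain category $\A_{I'}$ with $I' = (I\setminus\{a\})$ in the $\eta_a$-direction; one must then propagate it through the added $\eta^J$-grid and $0^{H\setminus\{a\}}$-grid directions, using that $F_a$ is exact and commutes with the other $F_i$ (so $\eta_a$ being mono on a generator is preserved by taking cokernels of the $\eta_{b}$ and by the $\A_p$-type grid construction, as in the proof of Lemma~\ref{lemma:resolving}). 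Once that bookkeeping is done, everything else is a direct citation of Proposition~\ref{prop.Ext-iota}.
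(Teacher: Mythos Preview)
Your approach is essentially identical to the paper's: identify the ambient category as $\Ares{I\setminus\{a\}}[\eta^J,0^{H\setminus\{a\}}]$, observe that $\Ares{I}[\eta^J,0^{H}]=\bigl(\Ares{I\setminus\{a\}}[\eta^J,0^{H\setminus\{a\}}]\bigr)_{\eta_a}[0^{\frac{1}{p_a-1}}]$, propagate Assumption~\ref{generalassumption} from $\Ares{I\setminus\{a\}}$ to the grid category as in Lemma~\ref{lemma:resolving}, and then cite Proposition~\ref{prop.Ext-iota}. One small slip: in your displayed recollement the labels are swapped --- $\iota^a$ has domain $\Ares{I}[\eta^J,0^{H}]$ (where $M,N$ live) and codomain $\Ares{I\setminus\{a\}}[\eta^{J\cup\{a\}},0^{H\setminus\{a\}}]$, not the other way around --- but your prose identifies the correct ambient $\A$, so the argument goes through.
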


\begin{proof}
By Assumption~\ref{generalassumption} we have that $\Ares{I\setminus\{a\}}$ has enough objects on which $\eta_a$ is monomorphism. Similarly to the proof of \ref{lemma:resolving} one sees that this implies that also $\Ares{I\setminus\{a\}}[ \eta^J,0^{H\setminus \{a\}}]$ has enough objects such that $\eta_a$ is monomorphism.
The result then follows from \ref{prop.Ext-iota} and the observation that 

 \[\Ares{I}[\eta^J,0^{H}] = \left( \Ares{I\setminus\{a\}}[ \eta^J,0^{H\setminus
 \{a\}}]\right)_{\eta_a}[0^{\frac{1}{p_a-1}}]. \]  
\end{proof}

\begin{lemma}Let $I\subseteq \{1,\dots,n\}$ and $J\subseteq I$.\label{lemma:computation}
	\[\Ext^i(\Rpi^JM,\iota^J T_I^I)=0\] for all $i>0$ if:
	\begin{itemize}
		\item $\eta_{a}(M|_{J'})$ is injective for all $J'\subset J$ and $a\in J\setminus J'$
		\item $\Ext^i(M|_J, T_I^{I\setminus J})=0$
	\end{itemize}
\end{lemma}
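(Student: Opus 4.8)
The plan is to run an inductive argument on $|J|$, using the iterated recollement structure and the one-weight results from Section~\ref{1weight}, in particular Proposition~\ref{prop:condition} and Theorem~\ref{thm:main}. The base case $J=\emptyset$ is vacuous. For the inductive step, pick an element $a\in J$ and set $J'=J\setminus\{a\}$. The idea is to recognize $\Ares{I}[\eta^J,0^I]$ as a one-weight extension of the ambient category $\Ares{I}[\eta^{J'},0^I]$ along the natural transformation $\eta_a$ (using the identification $\Ares{I}[\eta^J,0^I] = \Ares{I\setminus\{a\}}[\eta^{J'},0^{I\setminus\{a\}}]_{\eta_a}[0^{\frac{1}{p_a-1}}]$, or rather the analogous decomposition that exhibits the last recollement step), so that $\Rpi^J = \Rpi^a\Rpi^{J'}$ and $\iota^J = \iota^a\iota^{J'}$ factor accordingly. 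Then I would first use the $\Ext$-adjunction for $\Rpi^a$ (an exact functor, so Lemma~\ref{lem.exact_adjoints} applies together with the analogue of Proposition~\ref{prop.Ext-iota} for the $\iota^a$-pair) to reduce
\[
\Ext^i(\Rpi^J M, \iota^J T_I^I) = \Ext^i(\Rpi^a(\Rpi^{J'}M), \iota^a(\iota^{J'}T_I^I))
\]
to a one-weight vanishing statement in $\Ares{I\setminus\{a\}}[\eta^{J'},0^{I\setminus\{a\}}]$ governed by Proposition~\ref{prop:condition}: namely it suffices that $\Ext^{\ge 0}(\Delta\ker\eta_a(\Rpi^{J'}M), \iota^{J'}T_I^I)=0$ and $\Ext^{>0}(\Delta\cok\eta_a(\Rpi^{J'}M), \iota^{J'}T_I^I)=0$.

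The second key step is to identify $\ker\eta_a(\Rpi^{J'}M)$ and $\cok\eta_a(\Rpi^{J'}M)$. Here I would use that $\Rpi^{J'}$ is exact and built from the right adjoints $\Rpi^b$ whose effect on an object is to place $\eta_b$ in the appropriate slot; commuting the various $\eta$'s (legitimate since the $F_i$ commute), one computes $\ker\eta_a(\Rpi^{J'}M) \cong \Rpi^{J'}(\ker\eta_a(M))$ up to the $\Delta$-type padding, and the first hypothesis — $\eta_a(M|_{J'})$ injective — forces this kernel term to vanish, killing the first condition outright. For the cokernel term, $\cok\eta_a(\Rpi^{J'}M)$ unwinds (via the restriction functors $|_b$) to something built from $M|_{\{a\}}$, and after applying the $\Ext$-adjunctions for the $\Rpi^b$'s and peeling off the $\Delta$'s via the identities $\RDelta\delta = \id$ and $\LDelta\Delta=\id$ as in the proof of Theorem~\ref{thm:main}, the required vanishing becomes $\Ext^i(M|_J, T_I^{I\setminus J})=0$, which is exactly the second hypothesis (with the tilting object $T_I^{I\setminus J}$ recognized as $\RDelta^{\,\ast}$-compatible with $\iota^{J'}T_I^I$, i.e.\ the iterated-$\delta$ construction of Section~\ref{1weight} applied to $T_I$).

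The third step is bookkeeping: carefully matching the subcategory indices at each stage, checking that Assumption~\ref{generalassumption} supplies the ``enough monomorphism objects'' needed for Proposition~\ref{prop.Ext-iota} to apply at every level of the recollement tower (the previous lemma already did this for a single $\iota^a$, and one iterates), and verifying that the hypotheses of the lemma for $J$ restrict correctly to hypotheses for $J'$ so the induction closes — note the first hypothesis for $J'$ is a sub-collection of that for $J$, and the second for $J'$ reads $\Ext^i(M|_{J'}, T_I^{I\setminus J'})=0$, which one must derive, again via Proposition~\ref{prop:condition}/Theorem~\ref{thm:main}-style arguments, from the hypotheses for $J$. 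The main obstacle I anticipate is precisely this last point — threading the inductive hypothesis through so that the cokernel/kernel computations at step $a$ feed the right input into the $(|J|-1)$-case — together with keeping the identifications of the many categories $\Ares{\bullet}[\eta^\bullet,0^\bullet]$ and the functors $\iota^a,\pi^a,\Rpi^a$ straight; the $\Ext$-manipulations themselves are all instances of Lemma~\ref{lem.exact_adjoints} and Proposition~\ref{prop.Ext-iota} and should be routine once the combinatorial setup is pinned down.
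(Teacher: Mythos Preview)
Your approach is essentially the paper's: pick $a\in J$, factor $\Rpi^J=\Rpi^a\Rpi^{J\setminus\{a\}}$ and $\iota^J=\iota^a\iota^{J\setminus\{a\}}$, apply Proposition~\ref{prop:condition}, commute $\ker\eta_a$ past $\Rpi^{J\setminus\{a\}}$, use $\RDelta\,\iota^{J\setminus\{a\}}T_I^I=\iota^{J\setminus\{a\}}T_I^{I\setminus\{a\}}$, and iterate. Two small clarifications resolve the worry you flag at the end.

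First, the instance of the first hypothesis that kills the kernel at step one is the case $J'=\emptyset$ (so $\eta_a(M)$ is injective), not the case $J'=J\setminus\{a\}$ you quote: what you need is $\ker\eta_a(M)=0$, whence $\Rpi^{J\setminus\{a\}}(\ker\eta_a(M))=0$. The deeper instances of the first hypothesis are only used at later steps, once some restrictions $|_{\{a\}}$ have already been applied.

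Second, the induction closes more cleanly than you fear. After one step the surviving (cokernel) condition is
\[
\Ext^{>0}\!\big(\Rpi^{J\setminus\{a\}}(M|_{\{a\}}),\,\iota^{J\setminus\{a\}}T_I^{I\setminus\{a\}}\big)=0,
\]
which is precisely the shape of the original statement with $J$ replaced by $J\setminus\{a\}$, $M$ replaced by $M|_{\{a\}}$, and the superscript $I$ replaced by $I\setminus\{a\}$. The hypotheses for this smaller instance follow directly from those for $J$: the first because $(M|_{\{a\}})|_{K}=M|_{\{a\}\cup K}$ with $\{a\}\cup K\subsetneq J$, and the second because $(M|_{\{a\}})|_{J\setminus\{a\}}=M|_J$ and $(I\setminus\{a\})\setminus(J\setminus\{a\})=I\setminus J$. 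So you never need the auxiliary vanishing $\Ext^i(M|_{J'},T_I^{I\setminus J'})=0$ for the \emph{same} $M$ that was troubling you; the point is that $M$ itself changes at each step.
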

\begin{proof} We have, for all $a\in J$
	\[\Ext^i(\Rpi^JM,\iota^J T_I^I)=\Ext^i(\Rpi^a(\Rpi^{J\setminus\{a\}}M),\iota_{a}(\iota^{J\setminus\{a\}} T_I^I))\]
	Hence, using (\ref{prop:condition}) we see that $\Ext^i(\Rpi^J M,\iota^JT_I^I)=0$ for all $i>0$ if
	\begin{itemize}
		\item $\Ext^i(\Delta\ker\eta_{a}(\Rpi^{J\setminus\{a\}}M),
			\iota^{J\setminus\{a\}}T_I^I)=0$ for $i\ge 0$ and
		\item $\Ext^i(\Delta\cok\eta_{a}(\Rpi^{J\setminus\{a\}}M),
			\iota^{J\setminus\{a\}}T_I^I)=0$ for $i>0$.
	\end{itemize} Since 
	\[{\ker}\;\eta_{a}(\Rpi^{J\setminus\{a\}} M)=\Rpi^{J\setminus\{a\}}
	{\ker}\;\eta_{a}(M)\quad{\rm and}\quad({\RDelta})\iota^{J\setminus\{a\}}T_I^I=\iota^{J\setminus\{a\}}T_I^{I\setminus \{a\}}\]
	and so the two conditions become:
	\begin{itemize}
		\item $\Ext^i(\Rpi^{J\setminus\{a\}} \ker\eta_{a}M,
			\iota^{J\setminus\{a\}}T_I^{I\setminus \{a\}})=0$
			for $i\ge0$ and
		\item $\Ext^i(\Rpi^{J\setminus\{a\}} M|_{\{a\}},
			\iota^{J\setminus\{a\}}T_I^{I\setminus \{a\}})=0$
			for $i>0$.
	\end{itemize} Now repeat this procedure $|J|-1$ more times to get the result.
\end{proof}

\begin{theorem}\label{thm:general}Let $\A$ be an abelian category equipped with endofunctors $F_i$
	and natural transformation $\eta_i$ as in Section~\ref{sec:setup}, satisfying Assumption~\ref{generalassumption}.
	Assume there are cotilting objects $T_H$ in $\Ares{H}$, such that  for all $H\cap J =\emptyset$ and $a\not\in H\cup J$
	\begin{itemize}
		\item $\eta_a(T_H|_{J})$ is injective
		\item $\Ext^i_{\Ares{H\cup J}}(T_H|_J,
			T_{H\cup J})=0$ for all $i>0$.
	\end{itemize}
	Then, with the notations introduced above, 
	\[T:=\bigoplus_{H\subseteq \{1,\dots,n\}}\Rpi^{[1,n]\setminus H}\iota^{H} T_{H}^{H}\]
	is a cotilting object in $\AAA$.
\end{theorem}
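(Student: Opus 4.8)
The plan is to induct on $n$, using the recollement structure from Section~\ref{1weight} together with the iterated recollements displayed above. The one-weight case is exactly Theorem~\ref{thm:main}, so the base case is settled. For the inductive step, I would single out the index $n$ and work inside the bottom-row recollement of the displayed diagram for $a = n$: writing $\B$ for the category $\AA := \A[\eta_1^{1/p_1},\dots,\eta_{n-1}^{1/p_{n-1}}]$ built from the first $n-1$ weights, one has $\AAA = \B[\eta_n^{1/p_n}]$ with the recollement $\Ares{\{n\}}^{\B}[0^{1/(p_n-1)}] \xrightarrow{\iota} \B[\eta_n^{1/p_n}] \xrightarrow{\pi} \B$, where $\Ares{\{n\}}^{\B}$ is the subcategory of $\B$ on which $\eta_n$ vanishes. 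Then I would apply Theorem~\ref{thm:main} with $\A$ replaced by $\B$, with ``$\A_\eta$'' replaced by $\Ares{\{n\}}^{\B}$, with ``$U$'' the cotilting object in $\B$ supplied by the inductive hypothesis, and with ``$T$'' the cotilting object in $\Ares{\{n\}}^{\B}$ also supplied by the inductive hypothesis.

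To make this precise I would need two identifications. First, $\B \simeq \A[\eta_1^{1/p_1},\dots,\eta_{n-1}^{1/p_{n-1}}]$ with its functors $F_1,\dots,F_{n-1}$ and natural transformations $\eta_1,\dots,\eta_{n-1}$ satisfies the hypotheses of Theorem~\ref{thm:general} for $n-1$ weights: the cotilting objects $T_H^{\B}$ for $H \subseteq \{1,\dots,n-1\}$ are $T_H$ (now regarded inside $\B$, which requires checking that $\Ares{H}$, as a subcategory of $\A$, together with the grid construction for the remaining indices, matches $(\B)_{\Ares{H}}$ in the notation of the theorem — this is the content of the manipulations $\Ares{I}[\eta^J,0^H]$ introduced before Lemma~\ref{commute}), and the injectivity/$\Ext$-vanishing hypotheses for $n-1$ weights are a subset of those assumed for $n$ weights. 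Second, and symmetrically, $\Ares{\{n\}}^{\B}$ — i.e.\ the subcategory of $\B$ killed by $\eta_n$ — is itself the $(n-1)$-weight grid category built from $\Ares{\{n\}} \subseteq \A$, and the hypotheses of Theorem~\ref{thm:general} hold there too (with cotilting objects $T_{H \cup \{n\}}$ for $H \subseteq \{1,\dots,n-1\}$, since $\Ares{H}$ inside $\Ares{\{n\}}$ is $\Ares{H \cup \{n\}}$ inside $\A$). Granting both, the inductive hypothesis produces $U = \bigoplus_{H \subseteq [1,n-1]} \Rpi^{[1,n-1]\setminus H}\iota^H T_H^H$ cotilting in $\B$ and a cotilting object $T'$ in $\Ares{\{n\}}^{\B}$ of the analogous shape built from the $T_{H\cup\{n\}}$.

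Next I would verify the two numbered hypotheses of Theorem~\ref{thm:main} for this $U$ and $T'$. The condition ``$\eta_n(U)$ is injective'' must be checked summand-by-summand: $\eta_n\big(\Rpi^{[1,n-1]\setminus H}\iota^H T_H^H\big)$ should be injective, and since $\eta_n$ commutes with the functors $\Rpi^{a}, \iota^a$ for $a \le n-1$ (Lemma~\ref{commute}, together with the fact that these functors are built from exact functors and kernels/cokernels along \emph{other} indices), this reduces to injectivity of $\eta_n$ on $T_H^H$, which in turn — unwinding the iterated $0^{1/(p-1)}$ construction — reduces to injectivity of $\eta_n(T_H|_{J})$ for the relevant $J$'s, precisely the first bulleted hypothesis of the theorem. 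The condition ``$\Ext^i_{\Ares{\{n\}}^{\B}}(\cok\eta_n(U), T') = 0$ for $i>0$'' is where the second bulleted hypothesis and Lemma~\ref{lemma:computation} enter: $\cok\eta_n(U)$ is a direct sum of terms of the form $\Rpi^{[1,n-1]\setminus H}\iota^H (T_H|_{\{n\}})^H$ living in $\Ares{\{n\}}^{\B}$, and one needs its higher $\Ext$ into each $\Rpi^{\cdots}\iota^{\cdots}T_{H'\cup\{n\}}^{\cdots}$ to vanish; Lemma~\ref{lemma:computation} (applied inside $\Ares{\{n\}}^{\B}$, whose cotilting data is the $T_{H\cup\{n\}}$) reduces exactly this to the injectivity of the various $\eta_a(T_H|_{J'})$ and the $\Ext$-vanishing $\Ext^i(T_H|_J, T_{H\cup J}) = 0$ — again among the standing hypotheses. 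Finally I would record that $\iota\delta(T') \oplus \Rpi U$, which Theorem~\ref{thm:main} now certifies as cotilting in $\AAA$, equals $T$: the summand $\Rpi U$ contributes the terms with $n \notin$ (outer $\iota$-support), and $\iota\delta(T')$ contributes those with $n$ in the $0$-part, matching $\bigoplus_{H \subseteq [1,n]}\Rpi^{[1,n]\setminus H}\iota^H T_H^H$ once one checks $\iota\delta(\iota^H T_{H\cup\{n\}}^H) = \iota^{H\cup\{n\}}T_{H\cup\{n\}}^{H\cup\{n\}}$, which is the defining step of the iterative construction of $T_{H\cup\{n\}}^{H\cup\{n\}}$ from $T_{H\cup\{n\}}^H$.

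The main obstacle I anticipate is purely bookkeeping rather than conceptual: correctly matching the subcategories $\Ares{I}[\eta^J, 0^H]$ and their distinguished cotilting objects $T_I^H$ across the ``peel off index $n$'' identification, and making sure that the commutation relations of Lemma~\ref{commute} are strong enough to push $\eta_n$, kernels, and cokernels past the $\Rpi^a$ and $\iota^a$ for $a \le n-1$ so that the reductions in the previous paragraph really land on the stated hypotheses. None of these steps is deep, but there is genuine risk of an off-by-one in the weights ($p_a$ versus $p_a - 1$) or in which of $T_H$, $T_H^H$, $T_{H\cup\{n\}}$ is the relevant object at each stage, so I would state a clean lemma identifying $(\B)_{\Ares{\{n\}}}[0^{1/(p_n-1)}]$ with the appropriate grid category before invoking Theorem~\ref{thm:main}, and check the summand-matching of $T$ as an explicit, separate step.
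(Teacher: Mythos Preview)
Your inductive approach is correct and genuinely different from the paper's. The paper does \emph{not} induct on $n$; it proves rigidity and cogeneration directly in $\AAA$. For rigidity it computes $\Ext^i(\Rpi^{\bar H}\iota^H T_H^H,\,\Rpi^{\bar I}\iota^I T_I^I)$ for each pair $H,I$ by a case split: if $H\not\subseteq I$ one uses $\pi^a\iota^a=0$ for some $a\in H\setminus I$, while if $H\subseteq I$ adjunction and the commutation relations of Lemma~\ref{commute} reduce it to $\Ext^i(\Rpi^{I\setminus H} T_H^H,\,\iota^{I\setminus H} T_I^I)$, to which Lemma~\ref{lemma:computation} then applies. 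Cogeneration is a short reverse induction on $|I|$ showing $\pi^I M=0$ for all $I$. Your route packages everything into Theorem~\ref{thm:main}, which is conceptually pleasant, but the work does not disappear: verifying your condition~(2) in the inductive step, namely $\Ext^i_{\B_{\eta_n}}(\cok\eta_n(U),T')=0$, is \emph{exactly} the cross-term computation the paper does for rigidity, and your appeal to Lemma~\ref{lemma:computation} there is a bit quick --- the two arguments carry different $\Rpi/\iota$ decorations (indexed by $[1,n-1]\setminus H$ on the left and $[1,n-1]\setminus K$ on the right), so you still need the $H\subseteq K$ case split and the adjunction/commutation reduction before Lemma~\ref{lemma:computation} is in the right shape. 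What your approach buys is a clean reduction to the one-weight theorem and a clear inductive structure; what the paper's buys is that it avoids the bookkeeping of identifying $\B_{\eta_n}$ with an $(n-1)$-weight grid category, matching the summands of $T$ across the peel-off, and checking that Assumption~\ref{generalassumption} and the bulleted hypotheses descend --- and its cogeneration argument is noticeably shorter than what unwinding your framework would give.
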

\begin{proof} First we introduce the following notation: for $I\subseteq \{1,\dots,n\}$ let $\bar I:=[1,n]\setminus I$.

	Rigidity: We compute \[\Ext^i(\Rpi^{\bar H}\iota^H T_H^H, \Rpi^{\bar I}\iota^I T_I^I)\] for all $H,I\subseteq \{1,\dots,n\}$ and $i\ge 1$.
	If $H\cap \bar I\neq \emptyset$
	then using (\ref{commute}):
	\[\Ext^i(\Rpi^{\bar H}\iota^H T_H^H, \Rpi^{\bar I}\iota^I T_I^I)=\Ext^i(\pi^a\iota^a\iota^{H\setminus\{a\}}\Rpi^{\bar
	H}T_H^H,\Rpi^{\bar I\setminus\{a\}}\iota^IT_I^I)=0\]
	where $a\in H\cap \bar I$, since $\pi^a \iota^a=0$. 
	Thus we consider the case where $H\cap\bar I=\emptyset$ or, equivalently, $H\subseteq I$:
	
	If $I=H$, then since $\pi \Rpi=\Liota\iota=\id$, we have\[\Ext^i(\Rpi^{\bar H}\iota^H T_H^H,\Rpi^{\bar H}\iota^H T_H^H)=
	\Ext^i(T^H_H,T^H_H)=0\] since $T^H_{H}$ is cotilting.
	
	Finally, suppose $H\subset I$ and let $J=I\setminus H$. Then we have
	\[\Ext^i(\Rpi^{\bar H}\iota^H T_H^H, \Rpi^{\bar I}\iota^I T_I^I)=
	\Ext^i(\Rpi^JT_H^H, \iota^JT_I^I). \] By (\ref{lemma:computation}) this vanishes when
	\begin{itemize}
		\item $\eta_{a}(T^H_H|_{J'})$ is injective for all $J'\subset J$ and $a\in J\setminus J'$
		\item $\Ext^i(T^H_H|_J, T_I^{I\setminus J})=\Ext^i(T^H_H|_J, T_I^H)=0$ for all $i>0$.
	\end{itemize} Thus rigidity follows from the assumptions of the theorem and Lemma \ref{lemma:ext} applied $|H|$ times.

	Cogeneration: 
	Suppose $\Ext^i(M,T)=0$ for all $i\geq 0$. We aim to show $M=0$.
	We  do so by proving, that for all $I\subseteq \{1,\dots,n\}$, we have $\pi^IM=0$.
	We have\[0=\Ext^i(M,\Rpi^{[1,n]}T_{\emptyset})=\Ext^i(\pi^{[1,n]}M,T_{\emptyset})\]
	and so $\pi^{[1,n]} M=0$. We proceed by reverse induction on $|I|$.

	Suppose, that $\pi^{J}M=0$ for all $|J|\ge k+1$. Let $I\subseteq \{1,\dots,n\}$ such that $|I|=k$.
	Then for all $a\in \bar I$ we have $\pi^a\pi^IM=0$ so $\pi^IM=\iota^aN'$ for some $N'$. Hence
	 $\pi^IM=\iota^{\bar I}N$ for some $N$. 
	\[0=\Ext^i(M,\Rpi^{ I}\iota^{\bar I} T_{\bar I}^{\bar I})=\Ext^i(\pi^IM,\iota^{\bar I}T_{\bar I}^{\bar I})=\Ext^i(\iota^{\bar
	I}N,\iota^{\bar I}T_{\bar I}^{\bar I})=\Ext^i(N,T_{\bar I}^{\bar I})\]
	and so $N=0$ and hence $\pi^IM=0$ for all $\pi^I$ with $|I|=k$. Therefore, $\pi^IM=0$ for all $I\subseteq \{1,\dots,n\}$, in
	particular $\pi^\emptyset M:=M=0$.
\end{proof}

\section{Global dimension}\label{sec:global}

In this section we study the global dimension of the categories $\A [\eta^{\frac{1}{p}}]$. The main
aim is Theorem~\ref{thm.gldim_main}, showing that under certain assumptions (the most important of which is that $F$ is an autoequivalence) the global dimension of $\A [\eta^{\frac{1}{p}}]$ equals that of $\A$.

We start by considering the categories on the left side of the recollement of Proposition~\ref{prop.recollement}. (The abelian category here is called $\A_{\eta}$ because these are the categories we want to apply this to. However for this lemma this is just an arbitrary abelian category.)

\begin{lemma} \label{gldim_zero}
Let $\A_{\eta}$ be abelian, and $p \geq 2$. Then
\begin{itemize}
\item $\gldim \A_{\eta} [0^{\frac{1}{p-1}}] \leq \gldim \A_{\eta} +1$ (and in fact we have equality unless $p = 2$);
\item if $M \in \A_{\eta} [0^{\frac{1}{p-1}}]$ such that all morphisms $f_2^M, \ldots, f_{p-1}^M$ are epi then $\injdim M \leq \gldim \A_{\eta}$.
\end{itemize}
\end{lemma}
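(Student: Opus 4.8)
The plan is to identify the category in question with a representation category over $\A_\eta$ and then push all $\Ext$-computations down to $\A_\eta$ using Lemma~\ref{lem.exact_adjoints}. Write $m:=p-1$, so that $\mathcal{C}:=\A_\eta[0^{\frac{1}{p-1}}]={\rm rep}_{\A_\eta}A_m={\rm Fun}(A_m,\A_\eta)$ is the category of diagrams $M=(M_1\xrightarrow{f_2^M}M_2\to\cdots\xrightarrow{f_m^M}M_m)$ in $\A_\eta$, and set $d:=\gldim\A_\eta$. For $i=1,\dots,m$ the evaluation functor $\mathrm{ev}_i\colon\mathcal{C}\to\A_\eta$, $M\mapsto M_i$, is exact; it has an exact left adjoint $L_i$ sending $Y$ to the representation that is $Y$ in positions $\ge i$ and $0$ below (identity structure maps), and an exact right adjoint $R_i$ sending $Y$ to the representation that is $Y$ in positions $\le i$ and $0$ above. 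Since $(L_i,\mathrm{ev}_i)$ and $(\mathrm{ev}_i,R_i)$ are exact adjoint pairs, Lemma~\ref{lem.exact_adjoints} gives, for all $n$,
\[ \Ext^n_{\mathcal{C}}(L_iY,M)\simeq\Ext^n_{\A_\eta}(Y,M_i),\qquad \Ext^n_{\mathcal{C}}(M,R_iY)\simeq\Ext^n_{\A_\eta}(M_i,Y). \]
These two transfer isomorphisms drive everything.

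For the first bullet I would start from the standard two-term presentation of a quiver representation, now over the base category $\A_\eta$:
\[ 0\longrightarrow\bigoplus_{i=1}^{m-1}L_{i+1}(M_i)\longrightarrow\bigoplus_{i=1}^{m}L_i(M_i)\longrightarrow M\longrightarrow 0, \]
whose maps are the evident inclusions $L_{i+1}(M_i)\hookrightarrow L_i(M_i)$ together with $L_{i+1}$ applied to the structure maps of $M$, and whose exactness is checked one vertex at a time (where it becomes split exact). Feeding this into $\Ext^\bullet_{\mathcal{C}}(-,N)$ and applying the first transfer isomorphism, every term of the long exact sequence is an $\Ext$-group over $\A_\eta$ in degree $n$ or $n-1$, hence vanishes for $n>d+1$; thus $\gldim\mathcal{C}\le d+1$. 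For the claimed equality when $p\ne 2$ (i.e.\ $m\ge 2$): the case $d=\infty$ is immediate (e.g.\ $\Ext^n_{\mathcal{C}}(L_1X,R_mY)\simeq\Ext^n_{\A_\eta}(X,Y)$ is unbounded in $n$), and when $d<\infty$ one picks $X,Y\in\A_\eta$ with $\Ext^d_{\A_\eta}(X,Y)\ne 0$ and feeds the short exact sequence $0\to L_2(X)\to L_1(X)\to R_1(X)\to 0$ into $\Ext^\bullet_{\mathcal{C}}(-,L_2Y)$; since $(L_2Y)_1=0$ the long exact sequence degenerates to $\Ext^{d+1}_{\mathcal{C}}(R_1X,L_2Y)\simeq\Ext^d_{\A_\eta}(X,Y)\ne 0$, so $\gldim\mathcal{C}=d+1$.

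For the second bullet I would induct on $m$ (equivalently on $p$), the case $m=1$ being trivial since then $\mathcal{C}=\A_\eta$ and the hypothesis on the structure maps is vacuous. Let $m\ge 2$ and suppose all structure maps $f_2^M,\dots,f_m^M$ of $M$ are epimorphisms. Then the ``collapse onto the last vertex'' morphism $M\to R_m(M_m)$, which at vertex $i$ is the total structure map $M_i\to M_m$, is an epimorphism, precisely because vertexwise it is a composite of epimorphisms; its kernel is supported on the first $m-1$ vertices, hence equals $j(Q)$, where $j\colon\A_\eta[0^{\frac{1}{m-1}}]\hookrightarrow\mathcal{C}$ is the functor appending a zero at the last vertex and $Q=(\ker(M_i\to M_m))_{i=1}^{m-1}$. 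A snake-lemma argument applied to the maps of short exact sequences $\big(\ker(M_i\to M_m)\hookrightarrow M_i\twoheadrightarrow M_m\big)\to\big(\ker(M_{i+1}\to M_m)\hookrightarrow M_{i+1}\twoheadrightarrow M_m\big)$ --- whose middle vertical map $f_{i+1}^M$ is epi and whose right vertical map is the identity --- shows that the structure maps of $Q$ are again epimorphisms, so the inductive hypothesis gives $\injdim_{\A_\eta[0^{\frac{1}{m-1}}]}Q\le d$. Now apply $\Ext^\bullet_{\mathcal{C}}(N,-)$ to $0\to j(Q)\to M\to R_m(M_m)\to 0$: the $R_m(M_m)$-term is $\Ext^n_{\A_\eta}(N_m,M_m)$, which vanishes for $n>d$; and, since restriction to the first $m-1$ vertices is an exact left adjoint of the exact functor $j$, Lemma~\ref{lem.exact_adjoints} identifies the $j(Q)$-term with $\Ext^n_{\A_\eta[0^{\frac{1}{m-1}}]}(N_{\le m-1},Q)$, which vanishes for $n>d$ by induction. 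Hence $\Ext^n_{\mathcal{C}}(N,M)=0$ for all $N$ and all $n>d$, i.e.\ $\injdim M\le d$.

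The routine (if slightly fiddly) parts are the explicit descriptions of $L_i$, $R_i$, $j$ and their adjointness/exactness, and the vertexwise verification of the two-term presentation. The one genuinely content-bearing point is the short exact sequence in the second bullet: the hypothesis that the structure maps are epimorphisms is exactly what lets one write $M$ as an extension of the injective-dimension-$\le d$ object $R_m(M_m)$ by a ``kernel'' $j(Q)$ that is again of the same special type, which is what makes the induction close --- and also explains why, without that hypothesis, one can in general only afford the weaker bound $d+1$ of the first bullet.
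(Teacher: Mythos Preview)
Your proof is correct. It differs from the paper's in presentation and partly in method, so a brief comparison is worthwhile.

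The paper does not give a standalone proof of this lemma; instead it assumes it for a given $p$, develops Observation~\ref{obs.ker_coker_unit} and Lemmas~\ref{lem.ext_from_iota}--\ref{lem.Ext_from_E} for general $F$ and $\eta$, and then observes (Remark~\ref{rem.induct_0}) that specializing to $F=0$ closes an induction on $p$. The lower bound is obtained separately in Lemma~\ref{lem.lower_bound}. In other words, the paper's argument for the upper bound peels off one vertex at a time via the recollement short exact sequences $0\to\Lpi\pi X\to X\to\iota\Liota X\to 0$ and $0\to\iota\Riota Y\to Y\to\Rpi\pi Y\to\cok\varepsilon_Y\to 0$.

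Your approach to the first bullet is more direct: you use the standard two-term ``projective'' presentation $0\to\bigoplus L_{i+1}(M_i)\to\bigoplus L_i(M_i)\to M\to 0$ for $A_m$-representations in $\A_\eta$, together with the adjunctions $(L_i,\mathrm{ev}_i)$ and Lemma~\ref{lem.exact_adjoints}, to read off $\gldim\le d+1$ in one step rather than inductively. Your equality argument via $0\to L_2X\to L_1X\to R_1X\to 0$ is likewise more explicit than the paper's Lemma~\ref{lem.lower_bound} (which uses the map $\Rpi Y\to\iota\Delta Y$), though the two computations are cousins.

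For the second bullet the two arguments essentially coincide: your short exact sequence $0\to j(Q)\to M\to R_m(M_m)\to 0$ is precisely the paper's $0\to\iota\Riota M\to M\to\Rpi\pi M\to 0$ (noting $\cok\varepsilon_M=0$ under the epimorphism hypothesis), and your observation that restriction is an exact left adjoint of $j$ is just the statement that $\Liota$ is exact when $F=0$. Your snake-lemma check that the structure maps of $Q$ remain epi is the same verification the paper alludes to when it says ``provided the corresponding maps $f_2^Y,\ldots,f_{p-1}^Y$ are epi.''

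In short: your argument is a clean, self-contained proof; the paper's has the virtue of getting the lemma as a free specialization of machinery it needs anyway.
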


We do not prove this lemma here, but assume it holds for a given $p$. Note that this is justified for $p=2$. (In that case $\A_{\eta} [0^{\frac{1}{p-1}}] = \A_{\eta}$.) In the further discussion in this section we will reach an argument showing that the lemma then also holds for $p+1$, thus proving it inductively. See Remark~\ref{rem.induct_0}

Now we start considering the general case. Throughout the following morphisms and resulting exact sequences will play a role.

\begin{observation} \label{obs.ker_coker_unit}
Let $\A$, $F$, and $\eta$ as in Proposition~\ref{prop.recollement}. Let $X \in \A[\eta^{\frac{1}{p}}]$.
\begin{enumerate}
\item
For the unit $\varepsilon_X \colon X \to \Rpi \pi X$ we have
\begin{align*}
\ker \varepsilon_X & = \iota \Riota X \qquad \text{and} \\
\cok \varepsilon_X & = \iota(0 \to \cok f_2^X \cdots f_p^X \to \cdots \to \cok f_p^X).
\end{align*}
We note that all the non-zero maps in $[0 \to \cok f_2^X \cdots f_p^X \to \cdots \to \cok f_p^X]$ are epimorphisms.
\item
For the counit $\varphi_X \colon \Lpi \pi X \to X$ we have
\begin{align*}
\ker \varphi_X & = \iota(0 \to \ker f_1^X \to \cdots \to \ker f_1^X \cdots f_{p-1}^X) \qquad \text{and} \\
\cok \varphi_X & = \iota \Liota X.
\end{align*}
\end{enumerate}
\end{observation}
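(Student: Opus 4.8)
The plan is to unwind the explicit formulas for the recollement functors given in Proposition~\ref{prop.recollement} and compute the relevant kernels and cokernels term by term along the grid. Recall that $\pi X = M_p^X$, that $\Rpi M = (FM \xrightarrow{\eta(M)} M \xrightarrow{\id} M \to \cdots \to M)$, and that $\Lpi M = (FM \xrightarrow{\id} FM \to \cdots \to FM \xrightarrow{\eta(M)} M)$. Everything reduces to analysing a single explicit morphism of grids and reading off its kernel and cokernel componentwise, so no conceptual obstacle is expected; the care required is purely bookkeeping about indices and which arrow in the sequence carries $\eta$.

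First I would write down the unit $\varepsilon_X \colon X \to \Rpi \pi X$ explicitly: at position $j$ with $1 \le j \le p$ it is the composite $f^X_{j+1} \cdots f^X_p \colon M^X_j \to M^X_p$ (with the convention that the empty composite at $j = p$ is the identity), and at position $0$ it is $F$ applied to this, i.e.\ $F(f^X_2 \cdots f^X_p) = F(\eta(M^X_p))$-compatible data; the cycle condition guarantees this is a morphism in $\A[\eta^{\frac1p}]$. Then $\ker \varepsilon_X$ has $j$-th term $\ker(f^X_{j+1} \cdots f^X_p)$ and $\cok \varepsilon_X$ has $j$-th term $\cok(f^X_{j+1} \cdots f^X_p)$, both computed in $\A$ since kernels and cokernels in $\A[\eta^{\frac1p}]$ are computed pointwise. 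Comparing the kernel terms $\ker(f^X_{j+1}\cdots f^X_p)$ with the stated formula for $\Riota$ in Proposition~\ref{prop.recollement}, namely $\Riota X = (0 \to \ker f^X_2 \cdots f^X_p \to \cdots \to \ker f^X_p)$, shows $\ker \varepsilon_X = \iota \Riota X$ (the $p$-th term being $\ker(\id) = 0$, consistent with $\iota$ inserting $0$ in the last slot); comparing the cokernel terms gives the second displayed formula, and the fact that the composites $f^X_{j+1}\cdots f^X_p$ telescope — the map from the $(j{-}1)$st cokernel to the $j$th is induced by $f^X_j$ on a quotient — makes those connecting maps epimorphisms, which is the final remark in part (1).

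Part (2) is entirely dual. I would write the counit $\varphi_X \colon \Lpi \pi X \to X$: at position $j$ it is $f^X_1 \cdots f^X_j \colon (\Lpi M^X_p)_j \to M^X_j$, where $(\Lpi M^X_p)_j = FM^X_p$ for $j < p$ and $= M^X_p$ for $j = p$; so its $j$-th kernel term is $\ker(f^X_1 \cdots f^X_j)$ and its $j$-th cokernel term is $\cok(f^X_1 \cdots f^X_j)$. Matching against the formula $\Liota X = (0 \to \cok f^X_1 \to \cok f^X_1 f^X_2 \to \cdots \to \cok f^X_1 \cdots f^X_{p-1})$ from Proposition~\ref{prop.recollement} identifies $\cok \varphi_X$ with $\iota \Liota X$ (the last term $\cok(f^X_1\cdots f^X_p) = \cok \eta(M^X_p)$ versus the convention that $\iota$ puts this in slot $p-1$ — I would double-check the indexing shift between $\A[\eta^{\frac1p}]$ and $\A[0^{\frac{1}{p-1}}]$ here, which is the one place a sign/index slip could occur), and matching the kernel terms gives the first formula of part (2), with $\ker(f^X_1) = \ker f^X_1$ appearing with the appropriate index shift. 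The only genuine thing to verify beyond bookkeeping is that these pointwise kernels and cokernels, equipped with the induced maps, do satisfy the cycle conditions so that they genuinely lie in the image of $\iota$ — but this is immediate since $\iota$ is fully faithful onto $\ker \pi$ and each of these grids has last component $0$ (for the kernels) or has $\eta$ acting as $0$ on all terms (for the cokernels, because $\eta$ factors through the composites being killed).
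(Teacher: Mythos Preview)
Your approach is correct and is exactly what the paper has in mind: the statement is given as an ``Observation'' with no proof, so the intended argument is precisely the direct componentwise verification you outline using the explicit formulas from Proposition~\ref{prop.recollement}.

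The one place you flag as uncertain is indeed where you have a small slip. In part (2), at position $p$ the component of the counit $\varphi_X$ is the identity $M_p^X \to M_p^X$ (since $\pi \varphi_X = \id$ by adjunction), \emph{not} the composite $f_1^X \cdots f_p^X$. Thus $(\ker \varphi_X)_p = 0 = (\cok \varphi_X)_p$, which is exactly what $\iota$ produces in the last slot; there is no mysterious appearance of $\cok \eta(M_p^X)$ and no indexing shift to reconcile. Once you correct the description of $(\varphi_X)_p$, the match with $\iota \Liota X$ and $\iota(0 \to \ker f_1^X \to \cdots \to \ker f_1^X \cdots f_{p-1}^X)$ is immediate. (Similarly, your explanation of why the maps in the cokernel sequence of part (1) are epi could be sharpened: the structure map on cokernels is induced by $\id_{M_p^X}$, and it is surjective because $\operatorname{im}(f_j^X \cdots f_p^X) \subseteq \operatorname{im}(f_{j+1}^X \cdots f_p^X)$.)
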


We first study extensions in $\A[\eta^{\frac{1}{p}}]$ where the first term is in the image of the functor $\iota$.

\begin{lemma} \label{lem.ext_from_iota}
Let $\A$, $F$, and $\eta$ as in Section~\ref{1weight}. For $X \in \A_{\eta}[0^{\frac{1}{p-1}}]$ and $Y \in \A[\eta^{\frac{1}{p}}]$ we have
\[ \Ext^n_{\A[\eta^{\frac{1}{p}}]}( \iota X, Y) = 0 \quad \forall n > \gldim \A_{\eta} + 1. \]
If moreover all the maps $f_2^Y, \ldots, f_p^Y$ are epimorphisms, then the equality also holds for $n = \gldim \A_{\eta} + 1$.
\end{lemma}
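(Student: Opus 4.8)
The plan is to carry out a two-step d\'evissage of $Y$ along the recollement of Proposition~\ref{prop.recollement}, reducing $\Ext^n_{\A[\eta^{\frac{1}{p}}]}(\iota X, Y)$ to $\Ext$-groups of the two tractable forms $\Ext^n_{\A[\eta^{\frac{1}{p}}]}(\iota X, \Rpi M)$ with $M \in \A$ and $\Ext^n_{\A[\eta^{\frac{1}{p}}]}(\iota X, \iota Z)$ with $Z \in \A_\eta[0^{\frac{1}{p-1}}]$. First I would record the two inputs. For the first form: $\pi$ and $\Rpi$ are exact and adjoint and $\pi\iota = 0$, so Lemma~\ref{lem.exact_adjoints} gives $\Ext^n_{\A[\eta^{\frac{1}{p}}]}(\iota X, \Rpi M) = \Ext^n_{\A}(\pi\iota X, M) = 0$ for all $n \geq 0$ and all $M$. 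For the second form: Proposition~\ref{prop.Ext-iota} identifies it with $\Ext^n_{\A_\eta[0^{\frac{1}{p-1}}]}(X, Z)$, which vanishes for $n > \gldim\A_\eta + 1$ by the first bullet of Lemma~\ref{gldim_zero}, and vanishes already for $n > \gldim\A_\eta$ whenever the structure maps $f_2^Z, \dots, f_{p-1}^Z$ of $Z$ are epimorphisms, by the second bullet.

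Next I would split the unit $\varepsilon_Y \colon Y \to \Rpi\pi Y$ into the two short exact sequences
\[ 0 \to \iota\Riota Y \to Y \to Y' \to 0 \qquad\text{and}\qquad 0 \to Y' \to \Rpi\pi Y \to \iota C \to 0 , \]
where, by Observation~\ref{obs.ker_coker_unit}, $\iota\Riota Y = \ker\varepsilon_Y$ and $\iota C = \cok\varepsilon_Y$ with $C = (0 \to \cok f_2^Y\cdots f_p^Y \to \cdots \to \cok f_p^Y)$, and --- importantly --- $C$ has epimorphic structure maps $f_2^C, \dots, f_{p-1}^C$. Applying $\Ext^*_{\A[\eta^{\frac{1}{p}}]}(\iota X, -)$ to the second sequence and using that all $\Ext$-groups into $\Rpi\pi Y$ vanish, I obtain $\Ext^n_{\A[\eta^{\frac{1}{p}}]}(\iota X, Y') \cong \Ext^{n-1}_{\A[\eta^{\frac{1}{p}}]}(\iota X, \iota C)$ for $n \geq 1$, which is therefore $0$ for $n \geq \gldim\A_\eta + 2$ by the sharp estimate applied to $C$. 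Feeding this, together with the vanishing of $\Ext^n_{\A[\eta^{\frac{1}{p}}]}(\iota X, \iota\Riota Y)$ for $n > \gldim\A_\eta + 1$, into the long exact sequence of the first short exact sequence yields $\Ext^n_{\A[\eta^{\frac{1}{p}}]}(\iota X, Y) = 0$ for all $n > \gldim\A_\eta + 1$, which is the first assertion.

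For the sharpened claim, I would observe that if $f_2^Y, \dots, f_p^Y$ are all epimorphisms then every composite $f_j^Y\cdots f_p^Y$ is an epimorphism, so $C = 0$; hence $\varepsilon_Y$ is epi and the first short exact sequence collapses to $0 \to \iota\Riota Y \to Y \to \Rpi\pi Y \to 0$, giving $\Ext^n_{\A[\eta^{\frac{1}{p}}]}(\iota X, Y) \cong \Ext^n_{\A[\eta^{\frac{1}{p}}]}(\iota X, \iota\Riota Y)$ for $n \geq 1$. The final point is that $\Riota Y = (0 \to \ker f_2^Y\cdots f_p^Y \to \cdots \to \ker f_p^Y)$ also has epimorphic structure maps: a short diagram chase shows that if $f_j^Y$ is epi then so is the induced map $\ker f_j^Y\cdots f_p^Y \to \ker f_{j+1}^Y\cdots f_p^Y$. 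So the sharp bound of Lemma~\ref{gldim_zero} applies to $Z = \Riota Y$, forcing $\Ext^n_{\A[\eta^{\frac{1}{p}}]}(\iota X, \iota\Riota Y) = 0$ for $n > \gldim\A_\eta$, and in particular $\Ext^{\gldim\A_\eta + 1}_{\A[\eta^{\frac{1}{p}}]}(\iota X, Y) = 0$.

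This is a fairly mechanical two-step d\'evissage, so I do not anticipate a serious obstacle. The one place that needs genuine attention is the bookkeeping of which term carries the extra $+1$ in the coarse estimate, and checking that the epimorphism hypothesis on $Y$ is exactly what licenses the sharp injective-dimension bound of Lemma~\ref{gldim_zero} to be applied --- to $\cok\varepsilon_Y$ unconditionally, and to $\Riota Y$ under the hypothesis --- thereby removing that last degree.
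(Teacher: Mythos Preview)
Your proof is correct and follows essentially the same route as the paper: both arguments d\'evissage $Y$ via the unit $\varepsilon_Y \colon Y \to \Rpi\pi Y$, use $\pi\iota = 0$ to kill the $\Rpi$-term, identify the kernel and cokernel via Observation~\ref{obs.ker_coker_unit}, and then invoke the two parts of Lemma~\ref{gldim_zero} --- the coarse bound for $\Riota Y$ and the sharp one for the epimorphic-map object $C$ --- with the refinement that under the hypothesis $C=0$ and $\Riota Y$ itself acquires epimorphic structure maps. Your write-up is in fact slightly more explicit than the paper's in spelling out the two short exact sequences and the diagram chase showing $f_j^{\Riota Y}$ is epi.
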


\begin{proof}
We first observe that
\[ \Ext^n_{\A[\eta^{\frac{1}{p}}]}(\iota X, \Rpi \pi Y) = \Ext^n_{\A}( \underbrace{\pi \iota X}_{=0}, \pi Y) = 0 \; \forall n, \]
so the $\Ext$-space of the lemma vanishes provided
\begin{enumerate}
\item $\Ext^n_{\A[\eta^{\frac{1}{p}}]}(\iota X, \ker \varepsilon_Y) = 0$, and
\item $\Ext^{n-1}_{\A[\eta^{\frac{1}{p}}]}(\iota X, \cok \varepsilon_Y) = 0$.
\end{enumerate}
For the first space we use Observation~\ref{obs.ker_coker_unit} to simplify
\[ \Ext^n_{\A[\eta^{\frac{1}{p}}]}(\iota X, \ker \varepsilon_Y) = \Ext^n_{\A_{\eta}[0^{\frac{1}{p-1}}]}(X, \Riota Y), \]
so this space vanishes provided $n > \gldim \A_{\eta}[0^{\frac{1}{p-1}}]$, and hence by Lemma~\ref{gldim_zero} for $n > \gldim \A_{\eta} + 1$. Moreover, using the second part of Lemma~\ref{gldim_zero}, we see that this bound may be improved by $1$ provided all the maps $f_2^{\Riota Y}, \ldots f_{p-1}^{\Riota Y}$ are epimorphisms. This holds provided the corresponding maps $f_2^Y, \ldots, f_{p-1}^Y$ are epi.

For the second space we use the remark in the first point of Observation~\ref{obs.ker_coker_unit}. Note that this precisely tells us that we are in the situation of the second point of Lemma~\ref{gldim_zero}, whence
\[ \Ext^{n-1}_{\A[\eta^{\frac{1}{p}}]}(\iota X, \cok \varepsilon_Y) = 0 \quad \forall n-1 > \gldim \A_{\eta}. \]
Finally we note that if all the maps $f_2^Y, \ldots, f_p^Y$ are epi then $\cok \varepsilon_Y = 0$, so the space in the second point vanishes. 
\end{proof}

In the next step we assume that the first object lies in the set $\E$, that is that the map $f_1^X \cdots f_{p-1}^X$ is a monomorphism.

\begin{lemma} \label{lem.Ext_from_E}
Let $X \in \E$ and $Y \in \A[\eta^{\frac{1}{p}}]$. Then
\[ \Ext^n_{\A[\eta^{\frac{1}{p}}]}(X, Y) = 0 \quad \forall n > \max \{ \gldim \A, \; \gldim \A_{\eta} + 1 \}. \]
If moreover all of the maps $f_2^Y, \ldots, f_p^Y$ are epimorphisms, then
\[ \Ext^n_{\A[\eta^{\frac{1}{p}}]}(X, Y) = 0 \quad \forall n > \max \{ \gldim \A,\; \gldim \A_{\eta} \}. \]
\end{lemma}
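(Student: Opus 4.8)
\textbf{Proof plan for Lemma~\ref{lem.Ext_from_E}.}

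The plan is to exploit that $\E$ is a resolving subcategory (Lemma~\ref{lemma:resolving}) together with Observation~\ref{obs.ker_coker_unit}(2), which expresses the kernel and cokernel of the counit $\varphi_X \colon \Lpi \pi X \to X$ in terms of the functor $\iota$. The point is that for $X \in \E$ we can resolve $X$ in a controlled way: the cokernel $\cok \varphi_X = \iota \Liota X$ is in the image of $\iota$, and the kernel $\ker \varphi_X = \iota(0 \to \ker f_1^X \to \cdots \to \ker f_1^X \cdots f_{p-1}^X)$ is also in the image of $\iota$ — and moreover, since $X \in \E$ means $f_1^X \cdots f_{p-1}^X$ is a monomorphism, \emph{all} of the maps $f_1^X, \ldots, f_{p-1}^X$ are monomorphisms, so this kernel object is actually $\iota$ of the zero object, i.e.\ $\ker \varphi_X = 0$. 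Hence for $X \in \E$ the counit is injective and we get a short exact sequence
\[ 0 \to \Lpi \pi X \to X \to \iota \Liota X \to 0. \]

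First I would use this short exact sequence to reduce the computation of $\Ext^n(X, Y)$ to the two terms $\Ext^n(\Lpi \pi X, Y)$ and $\Ext^n(\iota \Liota X, Y)$. The second term is handled directly by Lemma~\ref{lem.ext_from_iota}: since $\Liota X \in \A_\eta[0^{\frac{1}{p-1}}]$, that lemma gives vanishing for $n > \gldim \A_\eta + 1$ in general, and for $n > \gldim \A_\eta$ when the maps $f_2^Y, \ldots, f_p^Y$ are epimorphisms. The first term is handled by the exact adjunction $(\Lpi, \pi)$ together with Lemma~\ref{lem.exact_adjoints}: since $\Lpi$ and $\pi$ are both exact (as noted after Lemma~\ref{lem.exact_adjoints}), we have $\Ext^n_{\A[\eta^{\frac1p}]}(\Lpi \pi X, Y) = \Ext^n_{\A}(\pi X, \pi Y)$, which vanishes for $n > \gldim \A$. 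Combining, the long exact sequence in $\Ext(-, Y)$ associated to the short exact sequence above yields $\Ext^n(X, Y) = 0$ for $n > \max\{\gldim \A, \gldim \A_\eta + 1\}$, and the improved bound $n > \max\{\gldim \A, \gldim \A_\eta\}$ under the epimorphism hypothesis on $Y$.

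The main subtlety to get right is the boundary/indexing bookkeeping in the improved-bound case: when passing from $\Ext^n(X,Y)$ through the long exact sequence, the contribution of $\cok \varphi_X = \iota \Liota X$ appears in degree $n$ (not shifted), so the bound coming from Lemma~\ref{lem.ext_from_iota} applies directly with no shift — one must be careful that the hypothesis of Lemma~\ref{lem.ext_from_iota} (that $f_2^Y, \ldots, f_p^Y$ are epi) is exactly what is assumed here, so the improved bound from that lemma is available, and that the $\Lpi \pi X$ term contributes in degrees $n$ and $n+1$ via the connecting maps but only needs $\gldim \A$ as a bound. I expect the only real work is carefully verifying that $\ker \varphi_X = 0$ for $X \in \E$ from the monomorphism condition, and then the long-exact-sequence argument is routine.
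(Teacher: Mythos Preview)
Your proof is correct and follows exactly the paper's approach: use the short exact sequence $0 \to \Lpi \pi X \to X \to \iota \Liota X \to 0$ (valid precisely because $X \in \E$ forces $\ker \varphi_X = 0$), then handle the left term by the adjunction $(\Lpi,\pi)$ and the right term by Lemma~\ref{lem.ext_from_iota}. One small slip: from $f_1^X \cdots f_{p-1}^X$ being mono you cannot conclude that each individual $f_i^X$ is mono, only that each \emph{partial composition} $f_1^X \cdots f_i^X$ is --- but since the entries of $\ker \varphi_X$ in Observation~\ref{obs.ker_coker_unit}(2) are exactly the kernels of these partial compositions, your conclusion $\ker \varphi_X = 0$ is unaffected.
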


\begin{proof}
We start by observing that $X \in \E$ is equivalent to $\ker \varphi_X = 0$, whence we have the short exact sequence
\[ 0 \to \Lpi \pi X \to X \to \iota \Liota X \to 0. \]
Therefore it suffices to consider the two $\Ext$-spaces $\Ext^n_{\A[\eta^{\frac{1}{p}}]}(\Lpi \pi X, Y)$ and
$\Ext^n_{\A[\eta^{\frac{1}{p}}]}(\iota \Liota X, Y)$.

For the first of these we have
\[ \Ext^n_{\A[\eta^{\frac{1}{p}}]}(\Lpi \pi X, Y) = \Ext^n_{\A}(\pi X, \pi Y), \]
so this vanishes for $n > \gldim \A$.

For the second one we use Lemma~\ref{lem.ext_from_iota} above.
\end{proof}

\begin{remark} \label{rem.induct_0}
We observe that we have now completed an inductive proof of the upper bounds in Lemma~\ref{gldim_zero}. In fact, in the case $F = 0$ we have $\E = \A[0^{\frac{1}{p}}]$, so there is no restriction on $X$ in the lemma above. The equality claimed in parantesis in Lemma~\ref{gldim_zero} follows from the following result.
\end{remark}

\begin{lemma} \label{lem.lower_bound}
Let $\A$, $F$, and $\eta$ as in Section~\ref{1weight}, and $p \geq 2$. Then
\[ \gldim \A[\eta^{\frac{1}{p}}] \geq \max \{ \gldim \A,\; \gldim \A_{\eta} + 1 \}. \]
\end{lemma}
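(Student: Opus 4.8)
The plan is to establish the two lower bounds $\gldim \A[\eta^{\frac{1}{p}}] \geq \gldim \A$ and $\gldim \A[\eta^{\frac{1}{p}}] \geq \gldim \A_{\eta} + 1$ separately, in each case by exhibiting objects realizing a nonzero $\Ext$ in the required degree. For the first bound, the natural candidates are the fully faithful functors $\Lpi$ and $\Rpi$ from the recollement of Proposition~\ref{prop.recollement}; since $\pi$, $\Lpi$, $\Rpi$ are all exact and $(\Lpi,\pi,\Rpi)$ is an adjoint triple, Lemma~\ref{lem.exact_adjoints} gives $\Ext^n_{\A[\eta^{\frac{1}{p}}]}(\Lpi M, \Rpi N) = \Ext^n_{\A}(M, \pi \Rpi N) = \Ext^n_{\A}(M, N)$ for all $n$, using $\pi \Rpi = \id$. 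Hence if $\gldim \A = d$ (possibly $\infty$), pick $M, N \in \A$ with $\Ext^d_\A(M,N) \neq 0$; then $\Ext^d_{\A[\eta^{\frac{1}{p}}]}(\Lpi M, \Rpi N) \neq 0$, so $\gldim \A[\eta^{\frac{1}{p}}] \geq d$.

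For the second bound, I would use the functor $\iota$ on the left side of the recollement together with the subcategory $\A_{\eta}[0^{\frac{1}{p-1}}]$. Set $e = \gldim \A_{\eta}$. By Proposition~\ref{prop.Ext-iota} we have $\Ext^n_{\A[\eta^{\frac{1}{p}}]}(\iota M, \iota N) = \Ext^n_{\A_{\eta}[0^{\frac{1}{p-1}}]}(M, N)$ for $M, N \in \A_{\eta}[0^{\frac{1}{p-1}}]$, so it would suffice to know $\gldim \A_{\eta}[0^{\frac{1}{p-1}}] \geq e + 1$; but that is exactly the equality clause of Lemma~\ref{gldim_zero} that this lemma is being used (via Remark~\ref{rem.induct_0}) to supply, so I must argue directly. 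Instead, take $M \in \A_{\eta}$ with $\Ext^e_{\A_{\eta}}(M, M) \neq 0$ (when $e = 0$, take $M$ any nonzero object and use $\Hom \neq 0$), and consider in $\A[\eta^{\frac{1}{p}}]$ the objects $\iota \delta' M$ where $\delta'$ lands in $\A_{\eta}[0^{\frac{1}{p-1}}]$, or more directly exhibit a length-$(e+1)$ nonsplit extension. The cleanest route: the object $S = \iota(0 \to \cdots \to 0 \to M)$ (the copy of $M$ in the rightmost, i.e.\ $p$-th, slot) has a two-term "resolution" by objects pulled back from $\A$, reflecting the extra step the functor $\iota$ costs; concretely, there is a short exact sequence $0 \to \iota(0 \to \cdots \to 0 \to M) \to \Rpi M \to \Rpi' \to 0$ or its $\Lpi$-analogue coming from Observation~\ref{obs.ker_coker_unit}, and splicing an $e$-fold self-extension of $M$ through this produces a nonzero class in $\Ext^{e+1}$. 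I would pick whichever of the kernel/cokernel descriptions in Observation~\ref{obs.ker_coker_unit} makes $\iota(0\to\cdots\to 0 \to M)$ appear as a kernel or cokernel of a map between objects of the form $\Lpi(-)$ or $\Rpi(-)$, then use that $\Ext^{>0}$ between such objects is controlled by $\gldim \A$ while the shift by $1$ forces the extra degree.

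The main obstacle is the bookkeeping in the second bound: making sure the nonsplit $\Ext^{e+1}$-class between objects of $\A[\eta^{\frac{1}{p}}]$ genuinely does not vanish, rather than being killed by the ambient category having smaller global dimension. This is where one must be careful that $\gldim \A$ could be strictly smaller than $\gldim \A_{\eta} + 1$ (indeed the point of the lemma), so the witnessing class cannot come from $\Lpi$ or $\Rpi$ alone — it must essentially use $\iota$ and the fact, visible in Observation~\ref{obs.ker_coker_unit}, that passing through $\iota$ introduces an honest extra syzygy. Concretely I would verify that the connecting map $\Ext^e_{\A}(M, \pi(\cdots)) \to \Ext^{e+1}_{\A[\eta^{\frac{1}{p}}]}(\iota(0\to\cdots\to M), Y)$ arising from the relevant short exact sequence is surjective with nonzero source for a suitable $Y$, e.g.\ $Y = \iota(0\to\cdots\to M)$ itself, and that the target is then forced nonzero because the only potential cancellation, $\Ext^{e+1}$ of the $\Lpi$- or $\Rpi$-term, vanishes by Lemma~\ref{lem.ext_from_iota} or the first bound already proved. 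The remaining case $e = \infty$ is immediate since then already $\gldim \A_{\eta}[0^{\frac{1}{p-1}}] = \infty$ by the (already available, non-circular) inequality direction, giving $\gldim \A[\eta^{\frac{1}{p}}] = \infty$ via Proposition~\ref{prop.Ext-iota}.
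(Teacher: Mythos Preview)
Your first bound is correct and essentially identical to the paper's argument (the paper uses $\Lpi X,\Lpi Y$ rather than $\Lpi M,\Rpi N$, but the adjunction trick is the same).

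For the second bound your strategy is on the right track but the execution is too vague, and some of the details you do commit to are off. Two specific issues: (i) the object $\iota(0\to\cdots\to 0\to M)$ lands in slot $p{-}1$, not slot $p$, and it does not obviously arise from Observation~\ref{obs.ker_coker_unit} in a way that yields the short exact sequence you want; (ii) the claim that the ``potential cancellation'' vanishes ``by Lemma~\ref{lem.ext_from_iota} or the first bound'' is not the right mechanism---the first bound gives no vanishing, and Lemma~\ref{lem.ext_from_iota} only controls large degrees. What you actually need is that $\Ext^n_{\A[\eta^{1/p}]}(\iota(-),\Rpi(-))=0$ for \emph{all} $n\ge 0$, which follows immediately from $\pi\iota=0$ and the $(\pi,\Rpi)$ adjunction.

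The paper makes this concrete: for $Y\in\Ares{\eta}$ one has $\Rpi Y=[FY\xrightarrow{0}Y\xrightarrow{\id}\cdots\xrightarrow{\id}Y]$, and there is an obvious epimorphism $p\colon \Rpi Y\twoheadrightarrow \iota\Delta Y$ with $\ker p=[FY\to 0\to\cdots\to 0\to Y]$. Applying $\Hom(\iota\Delta X,-)$ to the resulting short exact sequence and using the vanishing of all $\Ext^*(\iota\Delta X,\Rpi Y)$ gives an isomorphism
\[
\Ext^n_{\A[\eta^{1/p}]}(\iota\Delta X,\ker p)\;\cong\;\Ext^{n-1}_{\A[\eta^{1/p}]}(\iota\Delta X,\iota\Delta Y)\;=\;\Ext^{n-1}_{\Ares{\eta}}(X,Y),
\]
the last step by Proposition~\ref{prop.Ext-iota} and the exact adjunction $(\Delta,\RDelta)$. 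This exhibits nonzero $\Ext^{e+1}$ in $\A[\eta^{1/p}]$ whenever $\Ext^e_{\Ares{\eta}}(X,Y)\neq 0$. Your sketch was circling this, but you had not identified either the correct short exact sequence or the correct reason the middle term contributes nothing.
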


\begin{proof}
We have $\gldim \A[\eta^{\frac{1}{p}}] \geq \gldim \A$ because, for $X, Y \in \A$ we have
\[ \Ext^n_{\A[\eta^{\frac{1}{p}}]}(\Lpi X, \Lpi Y) = \Ext^n_{\A}(X, \pi \Lpi Y) = \Ext^n_{\A}(X, Y). \]

For the second part recall that for $X,Y \in \A_{\eta}$ we set
\[ \Delta Y = [0 \to Y \to Y \to \cdots \to Y] \in \A_{\eta}[0^{\frac{1}{p-1}}]. \]
Now we observe that we have an epimorphism
\[ p \colon  \Rpi Y \to \iota \Delta Y \]
in the category $ \A[\eta^{\frac{1}{p}}]$. We may explicitly describe the kernel of $p$ as
\[ \ker p = [FY \to 0 \to \cdots \to 0 \to Y]. \]
We have the following exact sequence \[0\to \ker p\to \Rpi Y\to \iota\Delta Y\to 0\]
and since $\Ext^n_{\A[\eta^{\frac{1}{p}}]}( \iota \Delta X, \Rpi Y) = \Ext^n_{\A}(\pi \iota \Delta X, Y) = 0$ 
for all $n$ we obtain
\[ \Ext^n_{\A[\eta^{\frac{1}{p}}]}(\iota \Delta X, \ker p) =
\Ext^{n-1}_{\A[\eta^{\frac{1}{p}}]}(\iota \Delta X, \iota \Delta Y) = \Ext^{n-1}_{\A_{\eta}}(X, Y). \]
It follows that
\[ \gldim \A[\eta^{\frac{1}{p}}] \geq \gldim \A_{\eta} + 1. \]
\end{proof}

We are now ready to prove the main result of this section, giving the precise value of the global dimension of the category $\A[\eta^{\frac{1}{p}}]$ under the assumption that $F$ is an equivalence.

\begin{theorem} \label{thm.gldim_main}
Let $\A$, $F$, and $\eta$ as in Section~\ref{1weight}, and assume additionally that $F$ is an equivalence. Then
\[ \gldim \A[\eta^{\frac{1}{p}}] = \max \{ \gldim \A, \gldim \A_{\eta} + 1 \}. \]
\end{theorem}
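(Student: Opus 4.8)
By Lemma~\ref{lem.lower_bound} we already have $\gldim\A[\eta^{\frac{1}{p}}]\ge\max\{\gldim\A,\gldim\A_{\eta}+1\}$, so the content of the theorem is the reverse inequality. Write $N:=\max\{\gldim\A,\gldim\A_{\eta}+1\}$; the plan is to show $\Ext^n_{\A[\eta^{\frac{1}{p}}]}(X,Y)=0$ for all $X,Y$ and all $n>N$. Two preliminary observations set the stage. First, a restriction of a monomorphism is a monomorphism, so the resolving subcategory $\E$ of Lemma~\ref{lemma:resolving} is in fact closed under subobjects; dually the subcategory $\{Y\mid f^Y_2\cdots f^Y_p\text{ is epi}\}$ is coresolving and closed under quotients. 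Secondly — and this is the one place the hypothesis on $F$ is genuinely used — when $F$ is an equivalence the opposite category $\A[\eta^{\frac{1}{p}}]^{\mathrm{op}}$ is again a grid category of the same shape: it is equivalent to $\A^{\mathrm{op}}[(\eta')^{\frac{1}{p}}]$ for the natural transformation $\eta'\colon(F^{\mathrm{op}})^{-1}\to\id$ induced by $\eta$, and under this equivalence $(\A^{\mathrm{op}})_{\eta'}\simeq(\A_{\eta})^{\mathrm{op}}$ with $\A^{\mathrm{op}}$ playing the role of $\A$. Consequently every statement of Sections~\ref{1weight} and~\ref{sec:global} (the recollement of Proposition~\ref{prop.recollement}, Observation~\ref{obs.ker_coker_unit}, and Lemmas~\ref{gldim_zero},~\ref{lem.ext_from_iota},~\ref{lem.Ext_from_E}) comes with a dual, so in what follows one may pass freely between projective- and injective-dimension estimates.

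The computation itself uses the counit exact sequence $0\to\iota K_X\to\Lpi\pi X\to X\to\iota\Liota X\to0$ for the source and the unit exact sequence $0\to\iota\Riota Y\to Y\to\Rpi\pi Y\to\iota C_Y\to0$ for the target, as in Observation~\ref{obs.ker_coker_unit}; here $K_X$ denotes the complex of kernels $\ker f_1^X\hookrightarrow\cdots\hookrightarrow\ker f_1^X\cdots f_{p-1}^X$ (all of whose internal maps are monomorphisms) and $C_Y$ the complex of cokernels $\cok f_2^Y\cdots f_p^Y\twoheadrightarrow\cdots\twoheadrightarrow\cok f_p^Y$ (all of whose internal maps are epimorphisms), both regarded as objects of $\A_{\eta}[0^{\frac{1}{p-1}}]$. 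Splitting these four-term sequences into short exact ones and running the resulting long exact $\Ext$-sequences reduces $\Ext^n(X,Y)$ to finitely many groups built from the boundary pieces. Most behave exactly as in the one-weight analysis: $\Ext^\ast(\Lpi\pi X,-)\simeq\Ext^\ast_{\A}(\pi X,\pi(-))$ and $\Ext^\ast(-,\Rpi\pi Y)\simeq\Ext^\ast_{\A}(\pi(-),\pi Y)$ vanish above degree $\gldim\A$; the mixed terms $\Ext^\ast(\iota(-),\Rpi\pi Y)$ and $\Ext^\ast(\Lpi\pi X,\iota(-))$ vanish identically since $\pi\iota=0$; and $\Ext^\ast(\iota\Liota X,-)$, $\Ext^\ast(-,\iota\Riota Y)$ vanish above degree $\gldim\A_{\eta}+1$ by Lemma~\ref{lem.ext_from_iota} and its dual. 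Finally the two genuinely delicate pieces $\iota K_X$ and $\iota C_Y$ are brought together, via Proposition~\ref{prop.Ext-iota}, into a single group $\Ext^\ast_{\A_{\eta}[0^{\frac{1}{p-1}}]}(K_X,C_Y)$; since $K_X$ has monomorphic internal maps it has projective dimension at most $\gldim\A_{\eta}$ (the projective dual of the second part of Lemma~\ref{gldim_zero}), while $C_Y$ has epimorphic internal maps and so, by that same second part, injective dimension at most $\gldim\A_{\eta}$.

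The place I expect to do the real work is exactly here: the two four-term sequences each cost a degree shift, so a naive application of Lemma~\ref{lem.ext_from_iota} twice only yields vanishing of $\Ext^\ast_{\A[\eta^{\frac{1}{p}}]}(\iota K_X,\iota C_Y)$ above degree $\gldim\A_{\eta}+2$, which is one too many. To close this gap one must show that $\Ext^{\gldim\A_{\eta}}_{\A_{\eta}[0^{\frac{1}{p-1}}]}(K_X,C_Y)$ already vanishes — that is, that the extension group between a kernel complex and a cokernel complex drops one degree below what the individual projective/injective dimension bounds predict. This is forced by feeding the improved (parenthetical) bounds of Lemmas~\ref{gldim_zero},~\ref{lem.ext_from_iota} and~\ref{lem.Ext_from_E} back through the reductions above, keeping careful track of which internal maps of $K_X$, $C_Y$, $\Riota Y$ and $\Liota X$ are actually isomorphisms (which is what decides when those improved bounds apply), and using the self-duality from the first paragraph so that the kernel- and cokernel-complex estimates are genuine mirror images and can be combined. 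Granting this last degree gain, the long exact sequences assembled above give $\Ext^n(X,Y)=0$ for every $n>N$, which is the desired upper bound; everything else — the identifications of the recollement functors on $\Ext$, the splittings, and the bookkeeping — is routine given the machinery of Sections~\ref{1weight} and~\ref{sec:global}.
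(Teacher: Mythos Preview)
Your proposal has a genuine gap precisely where you flag it. You correctly see that combining the counit sequence for $X$ with the unit sequence for $Y$ costs two degree shifts, and you propose to recover one by proving $\Ext^{d}_{\A_\eta[0^{\frac{1}{p-1}}]}(K_X, C_Y) = 0$ for $d = \gldim \A_\eta$. That vanishing is \emph{false}. Take $\A = \mod k[t]$, $F = \id$, $\eta$ multiplication by $t$, and $p = 3$; then $\A_\eta = \mod k$ and $d = 0$. For $X$ with $M_1 = M_2 = M_3 = k[t]/(t^2)$ and $(f_1, f_2, f_3) = (t, 1, 1)$, and $Y$ with the same modules and $(f_1, f_2, f_3) = (1, 1, t)$, one computes $K_X = C_Y = [k \xrightarrow{\id} k]$ in $\A_\eta[0^{\frac{1}{2}}]$, so $\Hom(K_X, C_Y) = k \neq 0$. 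Your suggestion to ``feed the improved bounds back through'' cannot repair this: those bounds already say $\pd K_X \le d$ and $\injdim C_Y \le d$, and there is no further drop to be extracted. Your duality observation about $\A[\eta^{\frac{1}{p}}]^{\mathrm{op}}$ is correct, but it only produces mirror images of the existing lemmas and hence the same shortfall.

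The paper's argument is different and avoids the double shift entirely. It introduces the \emph{shift autoequivalence}
\[ F_{\frac{1}{p}}\colon [FX_p \to X_1 \to \cdots \to X_p] \longmapsto [FX_{p-1} \to FX_p \to X_1 \to \cdots \to X_{p-1}], \]
which is invertible on $\A[\eta^{\frac{1}{p}}]$ exactly because $F$ is on $\A$. One then uses only the short exact sequence $0 \to \iota\Riota X \to X \to \operatorname{im}\varepsilon_X \to 0$: the left term is handled by Lemma~\ref{lem.ext_from_iota}, and for the right term one observes that $\operatorname{im}\varepsilon_X$ has $f_2, \ldots, f_p$ all monomorphisms, so $F_{-\frac{1}{p}}(\operatorname{im}\varepsilon_X) \in \E$. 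Since $F_{\frac{1}{p}}$ is an autoequivalence,
\[ \Ext^n(\operatorname{im}\varepsilon_X, Y) = \Ext^n\!\big(F_{-\frac{1}{p}}(\operatorname{im}\varepsilon_X),\, F_{-\frac{1}{p}}Y\big), \]
and Lemma~\ref{lem.Ext_from_E} kills this for $n > \max\{\gldim\A, \gldim\A_\eta + 1\}$ with no degree lost. The shift is the missing idea.
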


One key ingredient for the proof is the following observation.

\begin{observation}
The functor $F$ induces an endofunctor $F_{\frac{1}{p}}$ of $\A[\eta^{\frac{1}{p}}]$, given by
\[ F_{\frac{1}{p}} ( FX_p \to X_1 \to \cdots \to X_p) = [ FX_{p-1} \to FX_p \to X_1 \to \cdots \to X_{p-1} ]. \]
Moreover, if $F$ is an autoequivalence of $\A$, then $F_{\frac{1}{p}}$ is an autoequivalence of $\A[\eta^{\frac{1}{p}}]$.
\end{observation}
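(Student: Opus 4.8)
The plan is to observe that $F_{\frac{1}{p}}$ is nothing but a cyclic shift of the entries of the diagrams in $\A[\eta^{\frac{1}{p}}]$, so that both assertions reduce to careful bookkeeping with the index convention $M_{\a}=FM_{\a+p}$ from Section~\ref{sec:setup}. First I would record the effect of $F_{\frac{1}{p}}$ on the description in Example~\ref{ex:1}: if $X=(X_0=FX_p\xrightarrow{f^X_1}X_1\to\dots\xrightarrow{f^X_p}X_p)$, then $F_{\frac{1}{p}}X$ is the tuple with $(F_{\frac{1}{p}}X)_d=X_{d-1}$ and $f^{F_{\frac{1}{p}}X}_d=f^X_{d-1}$ for all $d$, the indices outside $\{1,\dots,p\}$ being read off via the standing convention (so $(F_{\frac{1}{p}}X)_1=X_0=FX_p$ and the first map is $Ff^X_p$, matching the formula in the statement); on morphisms $\varphi=(\varphi_d)$ one sets $F_{\frac{1}{p}}\varphi=(\varphi_{d-1})$ with $\varphi_0:=F\varphi_p$. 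With this, the commuting squares of $F_{\frac{1}{p}}X$ (resp.\ the naturality squares of $F_{\frac{1}{p}}\varphi$) are literally those of $X$ (resp.\ $\varphi$) after relabelling, and for $d\in\{2,\dots,p\}$ the cycle condition of $F_{\frac{1}{p}}X$ at position $d$ is literally the cycle condition of $X$ at position $d-1$. The only position that is not immediately a relabelled cycle condition of $X$ is $d=1$: there the relevant composite works out to $F$ applied to the cycle relation of $X$ at position $p$, and one concludes using the compatibility $F\eta(M)=\eta(FM)$ — the same compatibility already implicit in the well-definedness of $\Lpi$ in Proposition~\ref{prop.recollement}. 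This establishes that $F_{\frac{1}{p}}$ is a well-defined endofunctor.

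For the autoequivalence statement, assume $F$ is an equivalence and fix a quasi-inverse $G$ together with natural isomorphisms $FG\cong\id_{\A}\cong GF$. I would then exhibit the ``reverse shift'' functor $E\colon\A[\eta^{\frac{1}{p}}]\to\A[\eta^{\frac{1}{p}}]$ sending $Y=(N_0=FN_p\to N_1\to\dots\to N_p)$ to the diagram with entries $N_2,N_3,\dots,N_p,GN_1$ (so that $(EY)_p=GN_1$ and the wrapped entry $(EY)_0=FGN_1$ is canonically isomorphic to $N_1$), the maps being those of $Y$ relabelled, with the two boundary maps corrected using $FG\cong\id$ and $GF\cong\id$. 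The cycle conditions of $EY$ transport from those of $Y$ along the chosen isomorphisms (again the only subtle position being the wrapped one, handled exactly as above), so $E$ is well defined, and one checks directly from the construction that $EF_{\frac{1}{p}}$ and $F_{\frac{1}{p}}E$ are naturally isomorphic to the identity, so $E$ is a quasi-inverse. An equivalent route, if one prefers, is to verify directly that $F_{\frac{1}{p}}$ is fully faithful — on $\Hom$-spaces it is $(\varphi_1,\dots,\varphi_p)\mapsto(F\varphi_p,\varphi_1,\dots,\varphi_{p-1})$, which is injective because $F$ is faithful and surjective because $F$ is full, the wrap-around naturality square needed to reconstruct a morphism being supplied by the convention $\psi_0=F\psi_p$ — and essentially surjective, with preimages given by the formula for $EY$ above.

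There is no obstacle of substance: the entire statement is formal. The only thing requiring genuine care is maintaining the convention $M_{\a}=FM_{\a+p}$ consistently throughout, together with the observation that the cycle condition at the single wrapped position is both the place that forces, and the only place that uses, the compatibility $F\eta=\eta F$. As a byproduct one may also record the identity $(F_{\frac{1}{p}})^{p}\simeq F^{\sharp}$, where $F^{\sharp}$ is the endofunctor of $\A[\eta^{\frac{1}{p}}]$ obtained by applying $F$ to every entry and every map; this makes the periodicity of the construction transparent and is itself an autoequivalence whenever $F$ is.
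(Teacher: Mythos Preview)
Your proposal is correct. The paper does not actually prove this Observation at all: the \texttt{proof} environment that follows it in the source is the proof of Theorem~\ref{thm.gldim_main} (it uses $F_{-\frac{1}{p}}$ and concludes with the global dimension bound), and the Observation itself is left as a self-evident remark. So there is nothing to compare your argument against; you have supplied a complete verification where the paper offers none.

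Your write-up is in fact more careful than the paper on one point. You correctly isolate that the only non-tautological cycle condition for $F_{\frac{1}{p}}X$ is at the wrapped position, where one needs $F(\eta(M))=\eta(FM)$, and you note that this same identity is already implicitly required for $\Lpi$ (and $\Rpi$) in Proposition~\ref{prop.recollement} to land in $\A[\eta^{\frac{1}{p}}]$. The paper never states this compatibility explicitly, so your observation that it is being silently assumed is a genuine clarification. Your construction of the quasi-inverse $E$ via a chosen quasi-inverse $G$ of $F$ is the natural one (and is exactly what the paper tacitly uses when it writes $F_{-\frac{1}{p}}$ in the proof of Theorem~\ref{thm.gldim_main}); the alternative fully-faithful-plus-essentially-surjective route you sketch is equally valid. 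The closing remark that $(F_{\frac{1}{p}})^p$ recovers the componentwise action of $F$ is correct and a nice sanity check, though not needed for the statement.
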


\begin{proof}
For $X \in \A[\eta^{\frac{1}{p}}]$, we consider the short exact sequence
\[ 0 \to \ker \varepsilon_X \to X \to \operatorname{im} \varepsilon_X \to 0. \]
We may explicitly describe the rightmost term by
\[ \operatorname{im} \varepsilon_X = [F X_p \to \operatorname{im} f_2^X \cdots f_p^X \to \operatorname{im} f_3^X \cdots f_p^X \to \cdots \to X_p], \]
and in particular all the maps $f_2^{\operatorname{im} \varepsilon_X}, \ldots f_p^{\operatorname{im} \varepsilon_X}$ are monomorphisms. It follows that
\[ F_{- \frac{1}{p}}(  \operatorname{im} \varepsilon_X ) \in \E. \]
It follows that for all $Y\in \A[\eta^{\frac{1}{p}}]$:
\[ \Ext^n_{\A[\eta^{\frac{1}{p}}]}(\operatorname{im} \varepsilon_X, Y) = \Ext^n_{\A[\eta^{\frac{1}{p}}]}( F_{- \frac{1}{p}}( \operatorname{im} \varepsilon_X), F^{- \frac{1}{p}}(Y)) = 0 \]
for $n >   \max \{ \gldim \A, \gldim \A_{\eta} + 1 \}$. (The first equality holds since $F_{\frac{1}{p}}$ is an autoequivalence, the second is Lemma~\ref{lem.Ext_from_E}.

On the other hand, since $\ker \varepsilon_x=\iota\Riota X$, we also have that
\[ \Ext^n_{\A[\eta^{\frac{1}{p}}]}(\ker \varepsilon_X, Y) = 0 \quad \forall n > \gldim \A_{\eta} + 1 \]
by Lemma~\ref{lem.ext_from_iota}.

Now, since $X$ is the middle term of a short exact sequence with $\ker \varepsilon_X$ and $\operatorname{im} \varepsilon_X$ as end terms, we have
\[ \Ext^n_{\A[\eta^{\frac{1}{p}}]}(X, Y) = 0 \quad \forall n > \max \{ \gldim \A,\; \gldim \A_{\eta} + 1 \}. \]
That is we have the upper bound for the global dimension of $A[\eta^{\frac{1}{p}}]$. The fact that this is also a lower bound was seen in Lemma~\ref{lem.lower_bound} above.
\end{proof}

\begin{remark}
In case that $F$ is not an equivalence, one may extend the argument in the proof of Lemma~\ref{lem.Ext_from_E} to the case where $X$ is not necessarily in $\E$: In that case one has to account for a possible kernel of $\varphi_X$, resulting in the weaker upper bound
\[ \gldim \A[\eta^{\frac{1}{p}}] \leq \max \{ \gldim \A, \;\gldim \A_{\eta} + 2 \}. \]
However we do not have any examples of Theorem~\ref{thm.gldim_main} failing when $F$ is not an equivalence.
\end{remark}

Finally, we may apply Theorem~\ref{thm.gldim_main} repeatedly to obtain the global dimension of categories of the form $\A[\eta_1^{\frac{1}{p_1}}, \ldots, \eta_n^{\frac{1}{p_n}}]$.

\begin{corollary}
	In the general situation of Theorem \ref{thm:general}, and assuming further that all the
	$F_i$ are autoequivalences,  we have
\[ \gldim \A[\eta_1^{\frac{1}{p_1}}, \ldots, \eta_n^{\frac{1}{p_n}}] = \max_{I \subseteq \{1, \ldots, n\}} \gldim \A_I + |I|. \]
(Here we set $\gldim 0 = - \infty$, or alternatively let the maximum run over all $I$ such that $\A_I \neq 0$.)
\end{corollary}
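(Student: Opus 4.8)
The plan is to induct on $n$, peeling off one weight at a time and invoking Theorem~\ref{thm.gldim_main} at each step. By construction (compare the iterative definition of $\Ares{I}[\eta^J,0^H]$ in Section~\ref{sec:general}) the category $\A[\eta_1^{\frac{1}{p_1}},\dots,\eta_n^{\frac{1}{p_n}}]$ is obtained from $\mathcal B := \A[\eta_1^{\frac{1}{p_1}},\dots,\eta_{n-1}^{\frac{1}{p_{n-1}}}]$ by the one-weight construction of Section~\ref{1weight}, applied to the endofunctor $F_n'$ of $\mathcal B$ induced by letting $F_n$ act entrywise on grids, together with the induced natural transformation $\eta_n' \colon F_n' \to \id_{\mathcal B}$ (whose component at a grid $(M_\a)$ is $(\eta_n(M_\a))_\a$). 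Since $F_n$ and its quasi-inverse $F_n^{-1}$ both commute with $F_1,\dots,F_{n-1}$ and are compatible with $\eta_1,\dots,\eta_{n-1}$, acting entrywise on grids produces mutually quasi-inverse endofunctors of $\mathcal B$, so $F_n'$ is an autoequivalence (this is the iterated form of the Observation following Theorem~\ref{thm.gldim_main}); moreover $\mathcal B$ is again $k$-linear, $\Hom$-finite and abelian. Hence Theorem~\ref{thm.gldim_main} applies and gives
\[ \gldim \A[\eta_1^{\frac{1}{p_1}},\dots,\eta_n^{\frac{1}{p_n}}] = \max\bigl\{\, \gldim \mathcal B,\ \gldim \mathcal B_{\eta_n'} + 1 \,\bigr\}. \]

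The second ingredient is to identify $\mathcal B_{\eta_n'}$, the full subcategory of $\mathcal B$ on which $\eta_n'$ vanishes. Because $\eta_n'$ acts entrywise, it vanishes on a grid precisely when $\eta_n$ vanishes on each entry; thus, exactly as with the identifications made in Section~\ref{sec:general},
\[ \mathcal B_{\eta_n'} = \A_{\{n\}}[\eta_1^{\frac{1}{p_1}},\dots,\eta_{n-1}^{\frac{1}{p_{n-1}}}], \]
where $\A_{\{n\}} = \{M \in \A \mid \eta_n(M) = 0\}$. This $\A_{\{n\}}$ is a full abelian subcategory of $\A$ (closed under kernels, cokernels and finite sums, by naturality of $\eta_n$), it is $k$-linear and $\Hom$-finite, and each $F_i$ with $i \le n-1$ restricts to an autoequivalence of it: indeed $\eta_n(F_i M) \cong F_i(\eta_n(M))$ since $F_i$ commutes with $F_n$ compatibly with the $\eta$'s, and faithfulness of $F_i$ gives the analogous statement for $F_i^{-1}$. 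More generally $(\A_{\{n\}})_I = \A_{I \cup \{n\}}$ for every $I \subseteq \{1,\dots,n-1\}$. So the inductive hypothesis applies both to $\mathcal B$ and to $\mathcal B_{\eta_n'}$.

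The induction is then pure bookkeeping. The base case $n = 0$ is the tautology $\gldim \A = \gldim \A_\emptyset$. For the inductive step, applying the hypothesis to $\mathcal B$ and to $\mathcal B_{\eta_n'}$ yields
\[ \gldim \mathcal B = \max_{I \subseteq \{1,\dots,n-1\}}\bigl(\gldim \A_I + |I|\bigr), \qquad \gldim \mathcal B_{\eta_n'} = \max_{I \subseteq \{1,\dots,n-1\}}\bigl(\gldim \A_{I \cup \{n\}} + |I|\bigr). \]
Substituting these into the displayed formula above and re-indexing by $J = I$ in the first term and by $J = I \cup \{n\}$ in the second, the right-hand side becomes the maximum over all $J \subseteq \{1,\dots,n\}$ with $n \notin J$ together with the maximum over all $J \subseteq \{1,\dots,n\}$ with $n \in J$, i.e.\ $\max_{J \subseteq \{1,\dots,n\}}\bigl(\gldim \A_J + |J|\bigr)$, as desired. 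The convention $\gldim 0 = -\infty$ keeps everything consistent whenever some $\A_J$ vanishes (as happens, e.g., if $\A_{\{n\}} = 0$).

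\textbf{Main obstacle.} The corollary is a short formal consequence of Theorem~\ref{thm.gldim_main}, so the only point requiring genuine attention is that the hypotheses of that theorem survive the iteration: that $F_n$ induces an \emph{autoequivalence} (not merely an exact endofunctor) of each intermediate grid category $\mathcal B$, and that $\mathcal B_{\eta_n'}$ is correctly realised as a grid category over the abelian category $\A_{\{n\}}$ with the restricted $F_i$ still autoequivalences. Both rest on the compatibility between the commutativity isomorphisms among the $F_i$ and the natural transformations $\eta_i$ built into the setup of Section~\ref{sec:setup}; this is where the (routine) care is needed.
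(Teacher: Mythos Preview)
Your proof is correct and follows exactly the same approach as the paper's: iteratively peel off one weight using the identification $\A[\eta_1^{\frac{1}{p_1}},\dots,\eta_n^{\frac{1}{p_n}}] = \A[\eta_1^{\frac{1}{p_1}},\dots,\eta_{n-1}^{\frac{1}{p_{n-1}}}][\eta_n^{\frac{1}{p_n}}]$ and apply Theorem~\ref{thm.gldim_main} at each step. Your write-up is simply more explicit than the paper's (which is a one-line sketch) in verifying that the induced $F_n'$ is an autoequivalence and that $\mathcal B_{\eta_n'}$ is the grid category over $\A_{\{n\}}$, and in carrying out the bookkeeping for the maximum.
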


	\begin{proof}
		We can construct the category $\AAA$ iteratively using the fact that
		\[\A[\eta_1^{\frac{1}{p_1}},\dots,\eta_i^{\frac{1}{p_i}},\eta_{i+1}^{\frac{1}{p_{i+1}}}]=
		\A[\eta_1^{\frac{1}{p_1}},\dots,\eta_i^{\frac{1}{p_i}}][\eta_{i+1}^{\frac{1}{p_{i+1}}}]
	\] where we have extended the action of $F_{i+1}$ to
	$\A[\eta_1^{\frac{1}{p_1}},\dots,\eta_i^{\frac{1}{p_i}}]$ component wise. Since the
	$F_i$ commute, this construction is well defined and equals our original construction. Thus
	the result follows from Theorem \ref{thm:general} applied repeatedly.
\end{proof}

\section{Applications to orders on projective varieties}\label{sec:order}
In \cite{IL} \emph{Geigle-Lenzing (GL) orders} on $\P^d$ were used to study Geigle-Lenzing weighted projective spaces which
in turn were introduced in \cite{HIMO}. We have already introduced GL order in
Definition~\ref{def:order}:
they are orders made up of tensor products of sheaves of algebras of the form
\[T_{p_i}(\O,\O(-L_i)) := 
\underbrace{\begin{bmatrix}
			\O&\O(-L_i)&\dots&\O(-L_i)&\O(-L_i)\\
			\O&\O&\dots&\O(-L_i)&\O(-L_i)\\
			\vdots&\vdots&\ddots&\vdots&\vdots\\
			\O&\O&\dots&\O&\O(-L_i)\\
			\O&\O&\dots&\O&\O
	\end{bmatrix}}_{p_i}\]

The connection that the category $\AAA$ has to orders is described in the following proposition.
\begin{proposition}  Let $X$ be a projective variety over $k$
	and $L_1,\dots, L_n$ effective Cartier divisors on $X$.
Let $\A=\coh X$,  $F_i:=-\otimes_X \O(-L_i)$ and 
$\eta_i(\F)\colon F_i\F\hookrightarrow \F$ be the natural inclusions. If 
\[
	A=\bigotimes_{i=1}^nA_i, \quad A_i=T_{p_i}(\O,\O(-L_i)) 
\]
then\[\mod A\simeq\A[\eta_1^{\frac{1}{p_1}},\dots,\eta_n^{\frac{1}{p_n}}]\]
\end{proposition}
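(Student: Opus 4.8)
The plan is to unwind both sides into concrete data and build the equivalence by hand. First I would reduce to the case $n=1$: since $A = \bigotimes_i A_i$ and the construction $\A[\eta_1^{1/p_1},\dots,\eta_n^{1/p_n}] = \A[\eta_1^{1/p_1},\dots,\eta_{n-1}^{1/p_{n-1}}][\eta_n^{1/p_n}]$ is iterative (as used in the proof of the global-dimension corollary), it suffices to establish $\mod T_p(\O,\O(-L)) \simeq \A[\eta^{1/p}]$ for a single effective Cartier divisor $L$, with $F = -\otimes\O(-L)$ and $\eta$ the canonical inclusion, and then tensor/iterate, checking along the way that the extension of $F_{i+1}$ to the grid category agrees with the componentwise action (which is exactly the compatibility recorded in the corollary's proof).

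For the one-weight case, the key observation is that $T_p(\O,\O(-L))$ is, locally, the "order" whose module category is well understood: a left $T_p(\O,\O(-L))$-module is a tuple of $\O_X$-modules $M_1,\dots,M_p$ together with the action maps coming from the off-diagonal entries $\O(-L)$, which amounts to morphisms $M_{j-1}\otimes\O(-L) \to M_j$ — no, more precisely, to morphisms $f_j\colon M_{j-1}\to M_j$ together with the structure morphism from the $(1,p)$-entry $\O(-L)$ forcing the composite $f_1\cdots f_{p-1}\colon M_{p}\otimes \O(-L)\to\cdots$ wait — I would set this up carefully: the idempotents $e_{jj}$ decompose any module as $M = \bigoplus_j e_{jj}M$, the entry $e_{jj}Ae_{j-1,j-1} = \O$ gives maps $M_{j-1}\to M_j$ for $2\le j\le p$, the entry $e_{11}Ae_{pp} = \O(-L)$ gives a map $M_p\to M_1\otimes\O(-L)^{\vee}$ — equivalently $FM_p = M_p\otimes\O(-L)\to M_1$, call it $f_1$ — and the associativity/unit relations in $A$ translate precisely into the cycle conditions $f_1 f_2\cdots f_p = \eta$ (and its rotations) of Example~\ref{ex:1}. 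Conversely, given an object of $\A[\eta^{1/p}]$, reassembling the $M_j$ and the maps $f_j$ into a module over the entrywise-described $A$ is straightforward. One checks that morphisms on both sides are the same (commuting ladders of $\O_X$-module maps), and that the functor is additive, exact, fully faithful, and essentially surjective — so an equivalence of abelian categories.

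The main obstacle I expect is bookkeeping rather than conceptual: matching the twist conventions so that the "wrap-around" map really is $FM_p\to M_1$ with $F = -\otimes\O(-L)$ and not its dual, and verifying that the full set of relations defining $A$ (all the cyclic rotations of the product-equals-$\eta$ condition, coming from each $e_{jj}Ae_{jj}$-submodule generated via the various paths through the $(1,p)$-corner) corresponds exactly to the cycle condition imposed for every $\a\in\grid$ in the definition of $\AAA$, with the index-shift convention $M_\a := F M_{\a+p\e}$ making the two descriptions literally coincide. Once $n=1$ is pinned down, the passage to general $n$ is formal: $\mod(A_1\otimes_{\O}\cdots\otimes_{\O}A_n)$ is modules over $A_n$ internal to $\mod(A_1\otimes\cdots\otimes A_{n-1})$, and the grid category has the matching nested structure, so induction closes the argument. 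I would also remark that $\coh X$ is $\Hom$-finite and $k$-linear (as required by Section~\ref{sec:setup}) since $X$ is projective over $k$, and that the $F_i$ commute and are exact because each $\O(-L_i)$ is a line bundle.
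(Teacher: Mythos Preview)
Your proposal is correct and follows essentially the same idea as the paper: both arguments use the complete system of idempotents (the diagonal matrix units) to decompose an $A$-module into its components, and read off the structure maps from the off-diagonal entries, with the algebra relations giving the cycle condition. The only organizational difference is that the paper handles all $n$ at once by taking the tensor product idempotents $e_{a_1}\otimes\cdots\otimes e_{a_n}$ directly, whereas you first reduce to $n=1$ and then iterate; both routes are equivalent and the paper's own proof is in fact much terser than yours (it simply writes down the functor $\Phi$ on objects and says the maps come from the $A_i$-module structure, without spelling out the inverse or the verification).
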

\begin{proof}
	Let $e_{i}\in H^0(X,A_i)$ be the global section with a $1$ in the $(i,i)$-entry
	and $0$ elsewhere.
	
	We define the $\Phi\colon \mod A\to \A[\eta_1^{\frac{1}{p_1}},\dots,\eta_n^{\frac{1}{p_n}}]$ as follows: for $M\in \mod A$
	and $\a\in S=\{1,\dots,p_1\}\times\dots\times \{1,\dots,p_n\}$ set \[M_\a:=(e_{a_1}\otimes\dots\otimes e_{a_n})M.\] 
	As before we extend this to allow the components of $\a$ to be
		$0$ and treat them as functors $F_i$ i.e.\ $M_{\a} := F_i M_{\a + p_i \e_i}$ if $a_i=0$.
	
	Now, set
	\[f^i_\a\colon M_{\a - \e_i} \to M_{\a}\] to be the natural map coming from the $A_i$-module
	structure.
\end{proof}
From now on, we assume the following:
\begin{assumption}\label{general position}
$X$ is smooth 	and $D=\sum L_i$ is a simple
		normal crossing divisor i.e. for all $x\in \rm{Supp}\;D$ the local equations
		$f_i$ of $L_i$ form a regular sequence in $\O_{X,x}$.
\end{assumption}

\begin{proposition}
$A$ has finite global dimension.	
\end{proposition}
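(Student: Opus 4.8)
The plan is to reduce the statement to the global dimension computation carried out in Section~\ref{sec:global}, via the identification $\mod A \simeq \A[\eta_1^{\frac{1}{p_1}},\dots,\eta_n^{\frac{1}{p_n}}]$ with $\A = \coh X$ established in the preceding proposition. Under Assumption~\ref{general position} the functors $F_i = -\otimes_X\O(-L_i)$ are autoequivalences of $\coh X$, so the Corollary to Theorem~\ref{thm.gldim_main} applies and gives
\[ \gldim \mod A = \max_{I\subseteq\{1,\dots,n\}} \bigl(\gldim\A_I + |I|\bigr), \]
where $\A_I$ is the full subcategory of $\coh X$ of sheaves annihilated by all $\eta_i$, $i\in I$. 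Since $\gldim X = \gldim\coh X = \dim X < \infty$ because $X$ is smooth projective, the term $I=\emptyset$ is finite, and it remains to see that each $\A_I$ has finite global dimension.

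First I would identify $\A_I$ explicitly. An object $\F\in\coh X$ satisfies $\eta_i(\F)=0$, i.e.\ the inclusion $\F(-L_i)\hookrightarrow\F$ is zero, exactly when $\F$ is annihilated by the ideal sheaf $\O(-L_i)$, equivalently $\F$ is (the pushforward of) a coherent sheaf on the subscheme $L_i$. Hence $\A_I = \coh L_I$ where $L_I := \bigcap_{i\in I} L_i$ is the scheme-theoretic intersection. By Assumption~\ref{general position} the $L_i$ meet transversally (the local equations $f_i$ form a regular sequence), so $L_I$ is smooth of dimension $\dim X - |I|$ (in particular it may be empty, in which case $\A_I = 0$ and we set $\gldim 0 = -\infty$ as in the Corollary). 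Therefore $\gldim\A_I = \dim X - |I|$ when $L_I\neq\emptyset$.

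Plugging this in, every term of the maximum is $\dim X - |I| + |I| = \dim X$ (for those $I$ with $L_I\neq\emptyset$), so in fact $\gldim\mod A = \dim X$, which is finite; this proves the proposition and incidentally pins down the exact value. The only genuine point requiring care — and the step I would expect to be the main obstacle — is verifying that $\A_I \simeq \coh L_I$ as abelian categories with the global dimension of the smooth scheme $L_I$, i.e.\ checking that the simple-normal-crossings hypothesis really forces $L_I$ to be smooth (or empty) and that the embedding $\coh L_I\hookrightarrow\coh X$ matches the categorical definition of $\A_I$ on the nose, including the compatibility of the iterated cokernel/restriction functors $|_K$ with scheme-theoretic restriction. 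Everything else is a direct invocation of the Corollary in Section~\ref{sec:global} together with the fact that a smooth variety has global dimension equal to its dimension.
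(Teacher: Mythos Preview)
Your approach is correct but genuinely different from the paper's. The paper does not argue via the grid-category machinery at all: it simply cites \cite[Proposition~2.13]{IL} for the case $X=\P^d$ and remarks that the proof there (a direct local/ring-theoretic computation for the order) carries over unchanged to arbitrary smooth $X$. You instead feed the identification $\mod A\simeq\AAA$ back into the Corollary to Theorem~\ref{thm.gldim_main}. Your route has the advantage of making the result self-contained within the paper and of yielding the exact value $\gldim\mod A=\dim X$, not just finiteness; the paper's route is a one-line citation.

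Two small points you should address to make the argument airtight. First, the Corollary is phrased as holding ``in the general situation of Theorem~\ref{thm:general}'', which on a literal reading includes the existence of the cotilting objects $T_H$ --- something not yet available at this point in Section~\ref{sec:order}. You need to observe (correctly) that the proof of the Corollary only uses the iterative application of Theorem~\ref{thm.gldim_main}, hence only Assumption~\ref{generalassumption} and the autoequivalence hypothesis, not the cotilting data. Second, you do not explicitly check Assumption~\ref{generalassumption} for the categories $\A_I=\coh L_I$: since each $L_I$ is projective, every coherent sheaf is a quotient of a locally free one, and for locally free $\F$ the map $\eta_j(\F)\colon\F(-L_j)\to\F$ is injective because $L_j\cap L_I$ is a Cartier divisor on $L_I$ under the simple-normal-crossings hypothesis. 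With these two remarks in place your argument is complete.
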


\begin{proof}
	The case $X=\P^d$ was proved in \cite[Proposition 2.13]{IL}. The proof remains unchanged for an arbitrary smooth $X$ and $L_i$.
\end{proof}
\begin{corollary}[\cite{IL}, Proposition 5.2]
	Let $T$ be a tilting object in $\mod A$. 
	\begin{itemize}
		\item $\DDD^{\bo}(\mod A)=\thick T$.
		\item There is a triangle equivalence $\DDD^{\bo}(\mod A)\simeq\DDD^{\bo}(\mod\End_A(T))$.
	\end{itemize}
\end{corollary}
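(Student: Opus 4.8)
The plan is to recognise the statement as the standard tilting theorem for well-behaved abelian categories, which in this generality is \cite[Proposition~5.2]{IL}, and to observe that its proof uses nothing about $\P^d$ beyond features that persist for an arbitrary smooth projective $X$. Precisely, the only properties of $\mod A$ invoked are that it is $k$-linear, $\Hom$-finite and Noetherian --- all clear here, since $X$ is projective, so cohomology of coherent sheaves is finite-dimensional, and $A$ is coherent over $\O_X$ --- together with $\gldim A<\infty$, which is the Proposition just established; in particular $E:=\End_A(T)$ is a finite-dimensional $k$-algebra. So the first step is simply to record these hypotheses; after that one runs the usual argument, whose skeleton is as follows.

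For the first bullet, $\gldim A<\infty$ gives $\DDD^{\bo}(\mod A)=\operatorname{perf}A$ with $A$ a compact generator, i.e.\ $\thick A=\DDD^{\bo}(\mod A)$. Since $T$ is rigid and $T\in\mod A\subseteq\operatorname{perf}A$, it is a tilting complex in the sense of Rickard once we know $\thick T=\DDD^{\bo}(\mod A)$, and establishing this equality is the heart of the matter. One passes to $\DDD(\mathsf{Qcoh}\,A)$, notes that $T^{\perp}:=\{Y\mid\RHom_A(T,Y)=0\}$ is a localizing subcategory, and shows $T^{\perp}=0$: since $T$ is compact, $\RHom_A(T,-)$ commutes with coproducts and filtered colimits, and using $\gldim A<\infty$ the vanishing of $\RHom_A(T,Y)$ is reduced to a vanishing on the cohomology modules of $Y$, where the generation property of $T$ forces these to be zero. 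Then $T$ generates $\DDD(\mathsf{Qcoh}\,A)$, so by Neeman's theorem $\thick T$ is the subcategory of compact objects, namely $\operatorname{perf}A=\DDD^{\bo}(\mod A)$. This reproduces the argument of \cite{IL} and is exactly the place where both the generation property and finiteness of the global dimension are genuinely needed; it is the step I expect to be the main obstacle.

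Granting the first bullet, $T$ is a tilting complex for $A$ with endomorphism algebra $E$, and the derived Morita theorem of Rickard --- in Keller's dg formulation --- yields a triangle equivalence $\RHom_A(T,-)\colon\DDD^{\bo}(\mod A)\xrightarrow{\ \sim\ }\DDD^{\bo}(\mod E)$. Full faithfulness is a dévissage starting from $\Ext^{i}_A(T,T)=0$ for $i>0$ together with $\thick T=\DDD^{\bo}(\mod A)$, sending $T$ to $E$; the identification of the target with all of $\DDD^{\bo}(\mod E)$ (equivalently, $\gldim E<\infty$) falls out of the equivalence, because on the source side $\DDD^{\bo}(\mod A)$ already equals $\operatorname{perf}A$. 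No difficulty is expected beyond the generation step of the previous paragraph; the remainder is homological bookkeeping carried out verbatim in \cite[Proposition~5.2]{IL}.
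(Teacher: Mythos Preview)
Your proposal is correct and in fact more detailed than the paper: the paper gives no proof at all for this corollary, simply citing \cite[Proposition~5.2]{IL} and relying on the immediately preceding proposition that $A$ has finite global dimension. Your sketch of the standard tilting argument (compact generation via $T^{\perp}=0$, then Rickard/Keller derived Morita) is exactly the content of that cited result, and your observation that nothing in its proof is specific to $\P^d$ is precisely the point being made.
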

\begin{proposition}[Serre duality]\label{prop:serre}
	Let $A$ be a GL order as before and suppose ${\rm dim }\;X=d$. Let \[\omega_A:={\mathcal Hom}_X(A, \omega_X)\]which is an invertible $A$-bimodule. Then for any
	$M,N\in\mod A$ we have \[\Ext^i_A(M,N)=D\Ext^{d-i}_A(N,\omega_A\otimes_A M)\] where $D(-)=\Hom_k(-,k)$.
\end{proposition}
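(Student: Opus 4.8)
The plan is to reduce Serre duality for the order $A$ to Serre duality for $\coh X$. The key structural observation is that $A$, being a tensor product of the sheaves $A_i = T_{p_i}(\O,\O(-L_i))$, is a sheaf of $\O_X$-algebras which is locally free of finite rank as an $\O_X$-module; in particular $A$ is a finite (and, by the previous proposition, of finite global dimension hence in fact \emph{homologically homogeneous}, since $X$ is smooth and $A$ is maximal Cohen--Macaulay over $\O_X$) $\O_X$-algebra. I would first recall the adjunction $\Hom_A(M,\mathcal Hom_X(A,\mathcal G)) \simeq \Hom_X(M,\mathcal G)$ for an $A$-module $M$ and an $\O_X$-module $\mathcal G$, and its derived version $\mathbf R\Hom_A(M,\mathcal Hom_X(A,\mathcal G)) \simeq \mathbf R\Hom_X(M,\mathcal G)$, which holds because $A$ is locally free over $\O_X$ so $\mathcal Hom_X(A,-)$ is exact and sends injectives to injectives. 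This identifies $\omega_A = \mathcal Hom_X(A,\omega_X)$ as the object representing $\mathbf R\Hom_X(-,\omega_X)$ on the category of $A$-modules.

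Next I would invoke Serre duality on the smooth projective variety $X$ of dimension $d$ in its $\mathbf R\Hom$ form: for $\mathcal F,\mathcal G \in \DDD^{\bo}(\coh X)$ one has a functorial isomorphism $\Ext^i_X(\mathcal F,\mathcal G) \simeq D\,\Ext^{d-i}_X(\mathcal G,\mathcal F\otimes^{\mathbf L}_{\O_X}\omega_X)$. Applying this with $\mathcal F$ the underlying complex of an $A$-module $N$ and $\mathcal G$ the underlying complex of $M$, and then rewriting the right-hand side using the adjunction above together with the fact that $M$ is an $A$-module (so $\mathcal Hom_X(A,\omega_X)\otimes_A M$ computes $\omega_X \otimes_{\O_X} M$ after forgetting the $A$-structure, as $\omega_A$ is an invertible $A$-bimodule and $A$ is locally free over $\O_X$), one converts the $X$-side $\Ext$'s into $A$-side $\Ext$'s. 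Concretely: $\Ext^i_A(M,N) = \Ext^i_X(M,N)$ fails in general, so instead one should run the argument as $\Ext^i_A(M,N) \simeq D\,\Ext^{d-i}_A(N, \omega_A\otimes_A M)$ by first using the $A$-module structure of $N$ to write $\Ext^i_A(M,N)$, then passing to $X$ only where both arguments are genuinely $\O_X$-modules — I expect the cleanest route is via the isomorphism $\mathbf R\Hom_A(M,N) \simeq \mathbf R\Hom_X(M, \mathbf R\Hom_A(A, N))$ is trivial, so the real content is: $\mathbf R\Hom_A(M,N)\simeq \mathbf R\Hom_A(M, \mathcal Hom_X(A,\omega_X)\otimes_A (\omega_X^{-1}\otimes_{\O_X} N))$, reduce to the $X$-level via the adjunction, apply $X$-Serre duality, and reassemble.

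The technical heart — and the step I expect to be the main obstacle — is checking that $\omega_A := \mathcal Hom_X(A,\omega_X)$ is indeed an invertible $A$-bimodule and that the canonical map $\omega_A \otimes_A M \to$ (the object computing $\mathbf R\Hom_X(-,\omega_X)$) is an isomorphism with no higher Tor corrections, i.e. that $\omega_A$ is flat as a right $A$-module. This uses Assumption~\ref{general position}: since $D = \sum L_i$ is simple normal crossings and $X$ is smooth, $A$ is locally, étale-locally, a tensor product of ``triangular'' orders attached to regular sequences, and one can compute $\mathcal Hom_X(A_i,\O)$ explicitly as $T_{p_i}(\O,\O(L_i))$ up to a shift of the grading, showing $\mathcal Hom_X(A,\omega_X) \simeq A \otimes_X \omega_X$ twisted by the appropriate diagonal-shift automorphism — hence invertible and locally free over $\O_X$, so flat over $A$ on both sides. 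Given this, one chases the adjunction isomorphisms: $\Ext^i_A(M,N) = H^i\mathbf R\Hom_A(M,N)$, and since $\omega_A$ is an invertible bimodule, $\mathbf R\Hom_A(M,N) \simeq \mathbf R\Hom_A(M, \omega_A \otimes_A \omega_A^{-1}\otimes_A N)$; using $\mathbf R\Hom_A(-,\omega_A\otimes_A(-)) \simeq \mathbf R\Hom_X(-,\omega_X\otimes_{\O_X}(-))$ (the derived version of the adjunction, valid because $\omega_A$ represents $\mathbf R\Hom_X(-,\omega_X)$) reduces the computation to $\mathbf R\Hom_X$, where ordinary Serre duality on $X$ gives $\mathbf R\Hom_X(\mathcal F,\mathcal G\otimes\omega_X) \simeq D\,\mathbf R\Hom_X(\mathcal G,\mathcal F)[-d]$; unwinding and re-applying the adjunction in the other direction yields $\Ext^i_A(M,N) \simeq D\,\Ext^{d-i}_A(N,\omega_A\otimes_A M)$. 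Finiteness of the $\Ext$-groups and the boundedness needed to make $D(-)$ behave well follow from $A$ having finite global dimension (previous proposition) and $X$ being projective.
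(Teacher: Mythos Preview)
Your overall plan---reduce to Serre duality on $X$ via the forgetful functor $\mod A\to\coh X$ and its right adjoint $\mathcal{H}om_X(A,-)$---is the right strategy, and the observation that $\omega_A$ is an invertible bimodule is correct and needed. But the concrete chain of isomorphisms you propose contains a false step: the claimed identity $\mathbf{R}\Hom_A(M,\omega_A\otimes_A P)\simeq\mathbf{R}\Hom_X(M,\omega_X\otimes_{\O_X}P)$ does not hold. The adjunction only gives $\mathbf{R}\Hom_A(M,\mathcal{H}om_X(A,G))\simeq\mathbf{R}\Hom_X(M,G)$, and $\mathcal{H}om_X(A,\omega_X\otimes_{\O_X}P)=\omega_A\otimes_{\O_X}P$, not $\omega_A\otimes_A P$. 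Test with $M=P=A$: the left side is $\omega_A$, of $\O_X$-rank $\prod p_i$, while the right side is $A^\vee\otimes_{\O_X}A\otimes_{\O_X}\omega_X$, of rank $(\prod p_i)^2$. The fact that $\omega_A$ ``represents $\mathbf{R}\Hom_X(-,\omega_X)$'' is only the one-variable statement $\mathbf{R}\Hom_A(M,\omega_A)\simeq\mathbf{R}\Hom_X(M,\omega_X)$; it does not upgrade to your two-variable version. As a result your argument never produces a single $\O_X$-complex to which duality on $X$ can be applied and then unwound back to $\Ext_A$.

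The paper closes exactly this gap by introducing the derived $A$-dual $M^*:=\mathbf{R}\mathcal{H}om_A(M,A)$ and the identity $\mathbf{R}\mathcal{H}om_A(M,N)\simeq M^*\otimes^{\mathbf L}_A N$, valid since $A$ has finite global dimension so every $M$ is perfect. Now $M^*\otimes^{\mathbf L}_A N$ is a bounded complex in $\coh X$, and Grothendieck--Serre duality on $X$ gives $\H^i(M^*\otimes^{\mathbf L}_A N)\simeq D\H^{d-i}\bigl(\mathbf{R}\mathcal{H}om_{\O}(M^*\otimes^{\mathbf L}_A N,\omega_X)\bigr)$. Ordinary tensor--hom adjunction over $A$ then yields $\mathbf{R}\mathcal{H}om_{\O}(M^*\otimes^{\mathbf L}_A N,\omega_X)\simeq\mathbf{R}\mathcal{H}om_A(N,\mathbf{R}\mathcal{H}om_{\O}(M^*,\omega_X))$, and one identifies $\mathbf{R}\mathcal{H}om_{\O}(M^*,\omega_X)\simeq\omega_A\otimes^{\mathbf L}_A M$ by checking it on $M=A$ and extending along finite free resolutions. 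The device you are missing is $M^*$: it converts the bifunctor $\Ext_A(-,-)$ into hypercohomology of one object, so that duality on $X$ applies cleanly.
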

\begin{proof}
	This proof is adapted from \cite{AdJ}. Let $h^i\colon\DDD^b(\mod A)\to \mod A$ be the $i$-th cohomology functor, 
	$\RGamma$ the right derived functor of the global sections functor $\Gamma$, $\RHOM_A(-,N)$ the right derived functor of the
	sheaf hom functor $\mathcal Hom_A(-, N)$ and $-\otimes^\L_A N$ the left derived functor of the tensor functor. 
	For simplicity, we introduce some more notation: let $\H^i:=h^i\circ \RGamma$ be the
	hypercohomology functor and for $M\in\DDD(\mod A)$ let $M^*:=\RHOM_A(M,A)$. With this we have:
	\begin{align*}
		\Ext^i_A(M,N)&=\H^i(\RHOM_A(M,N))=\H^i(M^*\otimes^\L_AN)\simeq D\H^{d-i}(\RHOM_\O(M^*\otimes_A^\L N,\O)\otimes_\O^\L\omega_X)\\
		&=D\H^{d-i}(\RHOM_\O(M^*\otimes_A^\L N,\omega_X))=D\H^{d-i}(\RHOM_A(N, \RHOM_\O(M^*,\omega_X)))\\
		&=D\H^{d-i}(\RHOM_A(N,\omega_A\otimes_A^\L M))\\
		&=D\Ext^{d-i}_A(N,\omega_A\otimes_A M)
	\end{align*}and we are done.
\end{proof}
\begin{corollary}
	$T\in\mod A$ is tilting if and only if it cotilting.
\end{corollary}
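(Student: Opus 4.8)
The plan is to deduce this corollary directly from Serre duality as established in Proposition~\ref{prop:serre}, using the fact that $\omega_A = \mathcal{H}om_X(A,\omega_X)$ is an invertible $A$-bimodule and hence $\omega_A \otimes_A -$ is an exact autoequivalence of $\mod A$. First I would unwind the definitions: $T$ is tilting means $\Ext^i_A(T,T)=0$ for $i>0$ together with the generation condition ($\Ext^i_A(T,M)=0$ for all $i\ge 0$ implies $M=0$), while $T$ cotilting means the same $\Ext$-vanishing $\Ext^i_A(T,T)=0$ plus the cogeneration condition ($\Ext^i_A(M,T)=0$ for all $i\ge 0$ implies $M=0$). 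Since the rigidity condition is literally identical in both definitions, the entire content of the corollary is that, under the standing assumptions, generation of $T$ is equivalent to cogeneration of $T$.

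Next I would apply Proposition~\ref{prop:serre}: for all $M,N \in \mod A$ we have $\Ext^i_A(M,N) \cong D\,\Ext^{d-i}_A(N, \omega_A \otimes_A M)$, where $d = \dim X$ and $D = \Hom_k(-,k)$. Taking $N = T$ and letting $M$ range over $\mod A$, and using that $D$ is a duality on finite-dimensional vector spaces (so $V = 0 \iff DV = 0$), the vanishing $\Ext^i_A(M,T) = 0$ for all $i \ge 0$ is equivalent to $\Ext^{d-i}_A(T, \omega_A \otimes_A M) = 0$ for all $i \ge 0$, i.e.\ to $\Ext^j_A(T, \omega_A \otimes_A M) = 0$ for all $j \ge 0$ (re-indexing $j = d-i$; note $\Ext^j$ automatically vanishes for $j < 0$ and for $j > \gldim A$, which is finite). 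Thus $\Ext^i_A(M,T) = 0$ for all $i \ge 0$ precisely when $\omega_A \otimes_A M$ is an object annihilated by $\Ext^j_A(T,-)$ for all $j \ge 0$.

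Now suppose $T$ is tilting, so it generates. Given $M$ with $\Ext^i_A(M,T) = 0$ for all $i \ge 0$, the previous paragraph gives $\Ext^j_A(T, \omega_A \otimes_A M) = 0$ for all $j \ge 0$, hence $\omega_A \otimes_A M = 0$ by generation; since $\omega_A \otimes_A -$ is an autoequivalence (its quasi-inverse being $\mathcal{H}om_A(\omega_A, -) \cong \omega_A^{-1} \otimes_A -$), this forces $M = 0$. So $T$ cogenerates, and together with rigidity $T$ is cotilting. The converse is symmetric: one instead takes $M = T$ in Serre duality and lets $N$ vary, obtaining that $\Ext^i_A(T, N) = 0$ for all $i \ge 0$ is equivalent to $\Ext^j_A(N, \omega_A \otimes_A T) = 0$ for all $j \ge 0$; applying cogeneration of $T$ and invertibility of $\omega_A$ as before yields generation.

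I do not anticipate a serious obstacle here; the only points requiring a word of care are that $\omega_A \otimes_A -$ is genuinely an autoequivalence (which follows from $\omega_A$ being an invertible bimodule, as asserted in Proposition~\ref{prop:serre}) and that $A$ has finite global dimension so that all the relevant $\Ext$-vanishing ranges over finitely many degrees and the re-indexing $i \leftrightarrow d-i$ causes no issues at the boundary. Both of these are already recorded above. The argument is otherwise a formal manipulation of the Serre duality isomorphism.
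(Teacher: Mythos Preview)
Your proposal is correct and is exactly the argument the paper has in mind: the paper's own proof consists of the single line ``Follows immediately from \ref{prop:serre},'' and what you have written is precisely the unwinding of that sentence. The only minor remark is that in the converse direction you could make explicit that $\Ext^j_A(N,\omega_A\otimes_A T)\cong\Ext^j_A(\omega_A^{-1}\otimes_A N,T)$ (via the autoequivalence $\omega_A\otimes_A-$) before invoking cogeneration of $T$, but this is clearly what you intend by ``invertibility of $\omega_A$ as before.''
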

\begin{proof}
	Follows immediately from \ref{prop:serre}.
\end{proof}

We now translate our results from Sections \ref{1weight} and \ref{sec:general} to the category $\mod A$ but in light of the previous result we
will say $T$ is tilting, as opposed to cotilting.

Firstly, for each $i=1,\dots,n$ we have the following recollement, which is a sheafified
	version of the standard recollement we presented in Example \ref{eg:standard}.
Let $e_i$ be the global idempotent of $A_i$ with $1$ in the bottom right entry and $0$'s elsewhere. We
have \[\mod \frac{A}{\langle e_i\rangle}\xrightarrow{\iota}A\xrightarrow{\pi}e_iAe_i\] which is written
out in full looks like:
\[\mod
\underbrace{\begin{bmatrix}
	\O_{L_i}&0&\dots&\dots&0\\
	\O_{L_i}&\O_{L_i}&0&\dots&0\\
	\vdots&\vdots&&&\vdots\\
	\O_{L_i}&\O_{L_i}&\dots&\dots&\O_{L_i}
\end{bmatrix}}_{p_i-1}\xrightarrow{\iota}
\mod\underbrace{\begin{bmatrix}
	\O&\O(-L_i)&\dots&\dots&\O(-L_i)\\
	\O&\O&\O(-L_i)&\dots&\O(-L_i)\\
	\vdots&\vdots&&&\vdots\\
	\O&\O&\dots&\dots&\O
\end{bmatrix}}_{p_i}\xrightarrow{\pi}\coh X
\]
with 
\begin{align*}
	\iota(M)&=
		\begin{bmatrix}
			\O\\
			\vdots\\ \O\\0
		\end{bmatrix}\otimes_X M,\quad
	\pi(N)=e_iN=
	\begin{bmatrix}
		0&\dots&0\\
		\vdots&&\vdots\\
		0&\dots&1
	\end{bmatrix}N\\
	\Rpi(\mathcal F)&=\Hom_X(e_iA,\mathcal F)=Af_i\otimes_X\mathcal F=
	\begin{bmatrix}
		\O\\\vdots\\\O
	\end{bmatrix}\otimes_X\mathcal F
\end{align*}
where $f_i$ the global idempotent with $1$ in the top left entry and $0$'s elsewhere.

Furthermore, if $T$ is a tilting object in $\coh L_i$ then the
tilting object in 
\[\mod B:= \mod
\underbrace{\begin{bmatrix}
	\O_{L_i}&0&\dots&\dots&0\\
	\O_{L_i}&\O_{L_i}&0&\dots&0\\
	\vdots&\vdots&&&\vdots\\
	\O_{L_i}&\O_{L_i}&\dots&\dots&\O_{L_i}
\end{bmatrix}}_{p_i-1}\] is $B\otimes_{L_i} T$ which as an $\O_{L_i}$-module
is just
\[
	\begin{bmatrix}
		\O\\
		\O^2\\
		\vdots\\ \O^{p_i-1}
	\end{bmatrix}\otimes_{L_i} T
\]
Thus, applying (\ref{thm:general}) to this setup, and using the notation from Section \ref{sec:introduction} we see that we have proven:
\begin{theorem}\label{Thm:order}Let $\A=\mod A$ be as
	above.
	If for all $I,J\subseteq \{1,\dots,n\}$ with $I\cap
	J=\emptyset$ and $j\not\in I\cup J$
	\begin{enumerate}
		\item \label{cond1} $T_I\otimes \O_J\otimes
			\O(-L_j)\to T_I\otimes \O_J$ is injective  
		\item \label{cond2} $\Ext^i_{\O_{I\cup J}}(T_I\otimes \O_J,
			T_{I\cup J})=0$ for all $i>0$.
	\end{enumerate} then 
	\[
		\bigoplus_{I\subseteq \{1,\dots,n\}}\left(\bigotimes_{i\not\in I}
	A_if_i	
	\otimes
		\bigotimes_{i\in I}
		\frac{A_i}{\langle e_i\rangle}
		\otimes T_I\right)
	\] is tilting in $\A$.
\end{theorem}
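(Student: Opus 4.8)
The strategy is to recognize Theorem~\ref{Thm:order} as nothing more than a translation of the categorical Theorem~\ref{thm:general} through the equivalence $\mod A \simeq \AAA$ established in the proposition above, with $\A = \coh X$, $F_i = -\otimes_X \O(-L_i)$, and $\eta_i$ the natural inclusions. So the first step is to identify all the ingredients. Under this equivalence, $\Ares{I}$ corresponds to the sheaves annihilated by each $\eta_i$ for $i\in I$; since $\eta_i(\F)\colon \F\otimes\O(-L_i)\hookrightarrow\F$ is the inclusion with cokernel $\F\otimes\O_{L_i}$, an object of $\coh X$ lies in $\Ares{I}$ exactly when it is supported (scheme-theoretically) on $\bigcap_{i\in I}L_i$, i.e.\ $\Ares{I}\simeq \coh\O_I = \mod\O_I$ (using Assumption~\ref{general position} that $D=\sum L_i$ is simple normal crossings, so that $\O_I = \bigotimes_{i\in I}\O_{L_i}$ is the structure sheaf of the reduced complete intersection $\bigcap_{i\in I}L_i$). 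The restriction functor $M|_J$ of Section~\ref{sec:general}, being $\cok$ of the $\eta_j$'s, is precisely $M\otimes_X\O_J$ under this identification; in particular for $M = T_I\in\mod\O_I\subseteq\coh X$ we have $T_I|_J = T_I\otimes\O_J$.

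\textbf{Verifying the hypotheses.} Next I would check that the two bulleted conditions of Theorem~\ref{thm:general} become conditions (\ref{cond1}) and (\ref{cond2}) here. The injectivity hypothesis ``$\eta_a(T_H|_J)$ is injective'' unwinds to ``$T_I\otimes\O_J\otimes\O(-L_j)\to T_I\otimes\O_J$ is injective'' for $j\notin I\cup J$, which is exactly (\ref{cond1}); note this is the same statement whether one demands it for a single fixed $J$ or for all $J'\subset J$ as in Lemma~\ref{lemma:computation}, because the family of conditions indexed by subsets is subsumed. The $\Ext$-vanishing hypothesis ``$\Ext^i_{\Ares{H\cup J}}(T_H|_J, T_{H\cup J})=0$'' becomes ``$\Ext^i_{\O_{I\cup J}}(T_I\otimes\O_J, T_{I\cup J})=0$'' for $i>0$, which is (\ref{cond2}); here one must be slightly careful that the $\Ext$ is computed in $\mod\O_{I\cup J}$ and not in $\coh X$, but since $\Ares{I\cup J}$ is (equivalent to) $\mod\O_{I\cup J}$ as an abelian category this is automatic. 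I also need to confirm Assumption~\ref{generalassumption}: for each $I$ and $i\notin I$, the category $\Ares I = \mod\O_I$ has enough objects $M$ with $\eta_i(M) = (M\otimes\O(-L_i)\hookrightarrow M)$ a monomorphism --- but since $\O_I$ and $\O(-L_i)$ meet transversally by the simple normal crossings assumption, $-\otimes\O(-L_i)$ restricted to $\mod\O_I$ is exact and injective on objects that are torsion-free along $L_i\cap\bigcap_{j\in I}L_j$; every coherent sheaf on $\O_I$ is a quotient of a locally free one, and locally free sheaves satisfy this, so the assumption holds.

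\textbf{Translating the conclusion.} Finally I would match the output. Theorem~\ref{thm:general} produces the cotilting object $T = \bigoplus_{H}\Rpi^{[1,n]\setminus H}\iota^H T_H^H$ in $\AAA$. Through the sheafified recollements displayed just before the statement, $\Rpi^{a}$ corresponds to tensoring with $Af_i$ (more precisely, on the $i$-th tensor factor it is $A_if_i\otimes_X -$), and $\iota^a$ to tensoring with $A_i/\langle e_i\rangle$; moreover the iterated construction $T_H\mapsto T_H^H$ of the special tilting object over $\Ares H[0^H]$ is, factor by factor, the passage from $T_I$ to $B\otimes_{\O_{L_i}}T_I = \bigl[\O\ \O^2\ \cdots\ \O^{p_i-1}\bigr]^{\mathrm t}\otimes_{\O_{L_i}}T_I$, i.e.\ it assembles into $\bigotimes_{i\in I}(A_i/\langle e_i\rangle)\otimes T_I$. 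Putting these together, $\Rpi^{[1,n]\setminus I}\iota^I T_I^I$ becomes $\bigotimes_{i\notin I}A_if_i \otimes \bigotimes_{i\in I}(A_i/\langle e_i\rangle)\otimes T_I$, so the direct sum over $I$ is exactly the claimed module. Since $A$ has finite global dimension, cotilting and tilting coincide by the corollary following Proposition~\ref{prop:serre}, so the cotilting object from Theorem~\ref{thm:general} is tilting, completing the proof.

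\textbf{Main obstacle.} The genuinely substantive point is the bookkeeping in the last step: carefully checking that the abstract adjoint functors $\Rpi^a$, $\iota^a$ and the iterated ``special tilting object'' construction $T_H\mapsto T_H^H$ really do correspond, under the equivalence $\mod A\simeq\AAA$ and the sheafified recollements, to the concrete tensor factors $A_if_i$, $A_i/\langle e_i\rangle$, and $B\otimes_{\O_{L_i}}(-)$ respectively --- and that these identifications are compatible across the different tensor factors so that the iterated composite assembles into the displayed tensor product. Each individual identification is a direct computation (indeed it is essentially the content of Example~\ref{eg:standard} sheafified, together with the description of the tilting object over the triangular matrix ring $B$ given just above), but verifying that they interlock correctly for all $I$ simultaneously, and that the restriction-functor conditions of Lemma~\ref{lemma:computation} reduce cleanly to the single conditions (\ref{cond1})–(\ref{cond2}), is where care is needed.
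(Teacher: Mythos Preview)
Your proposal is correct and follows essentially the same route as the paper: the paper's own ``proof'' is literally the sentence ``Thus, applying (\ref{thm:general}) to this setup, and using the notation from Section~\ref{sec:introduction} we see that we have proven,'' preceded by the identifications of $\iota$, $\pi$, $\Rpi$ and the tilting object in $\mod B$ that you spell out. If anything you are more careful than the paper, explicitly verifying Assumption~\ref{generalassumption} and the dictionary $\Ares{I}\simeq\mod\O_I$, $|_J = -\otimes\O_J$, which the paper leaves implicit. One small imprecision: $\iota^a$ is not literally ``tensoring with $A_i/\langle e_i\rangle$'' but the inclusion of $\mod A_i/\langle e_i\rangle$ into $\mod A_i$; the tensor factor $A_i/\langle e_i\rangle$ in the final formula comes from the fact that $T_H^H$ itself is $B\otimes T_H$ with $B=A_i/\langle e_i\rangle$, and $\iota$ then just views this as an $A_i$-module --- your conclusion is correct, only the attribution of where the factor arises is slightly off.
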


\section{Examples}\label{sec:applications}
We now apply Theorem \ref{Thm:order} to various situations. Note that if $T_I$ is in fact a tilting bundle
then Condition (1) of the theorem is automatically satisfied. Furthermore, by Serre vanishing, we can always twist
$T_I$ so that Condition (2) is also satisfied. 
\subsection{Weighted projective lines}Let $X=\P^1_{X_0:X_1}$ and $\A=\coh X$. For $i=1,\dots,n$
choose points $L_i=(\lambda_{0,i}:\lambda_{1,i})$ 
and corresponding weights $p_i$.
$T_\emptyset=\O_X\oplus \O_X(1)$ is a tilting object
in $\coh X$ and $T_{\{i\}}=\O_{L_i}$ is a tilting object in $\coh L_i$. Then
\[T=\bigotimes_{i=1}^n A_if_i\otimes T_\emptyset\bigoplus_{i=1}^n\left( \bigotimes_{j\neq i}A_jf_j\otimes \frac{A_i}{\langle
e_i\rangle}\otimes T_{\{i\}} \right)\]
is a tilting object in
\[\mod\bigotimes_{i=1}^{n}A_i:=\mod\bigotimes_{i=1}^{n}T_{p_i}(\O,\O(-L_i))\simeq \AAA \] with endomorphism algebra
\[\xymatrix{
	&&\cdot\ar[r]&\dots
		\ar[r]&
		\cdot\\
		&&\cdot\ar[r]&\dots
		\ar[r]&
		\cdot\\
		\ar@<0.5ex>[r]^{X_0}
		\cdot\ar@<-0.5ex>[r]_{X_1}&
		\ar[uur]^{y_1}
		\cdot\ar[ur]_{y_2}\ar[dr]_{y_n}
		&\vdots&\vdots&\vdots\\
		&&\cdot\ar[r]&\dots
		\ar[r]&
	\cdot}\] with relations \[(\lambda_{1,i}X_0-\lambda_{0,i}X_1)y_i=0.\]
	This algebra is known as the ``squid''.

\subsection{Weighted $\P^2$}
Let $\A=\coh \P^2$.
Note that both lines and smooth conics in $\P^2$ are isomorphic to $\P^1$ and hence have tilting bundles.
For $i=1,\dots, l$ fix hyperplanes $L_i$ as well as for $i=l+1,\dots,n$ smooth conics $L_i$ and weights $p_i$.
As before, we consider the category
\[\A\simeq \mod \bigotimes_{i=1}^{n}T_{p_i}(\O,\O(-L_i)). \]
$T_\emptyset=\O_{\P^2}(-2) \oplus \O_{\P^2}(-1) \oplus \O_{\P^2}$ is a tilting bundle in $\coh \P^2$, 
$T_{\{i\}}=\O_{L_i}\oplus \O_{L_i}(1)$ is a tilting bundle in $\coh L_i$ and $T_{\{i,j\}}=\O_{L_i\cap L_j}$ is a tilting
bundle in $L_i\cap L_j$. 
Thus, by (\ref{Thm:order})
\[T=\bigotimes_{i=1}^n A_if_i\otimes T_\emptyset\bigoplus_{i=1}^n\left( \bigotimes_{j\neq i}A_jf_j\otimes \frac{A_i}{\langle
e_i\rangle}\otimes T_{\{i\}}\right)
\bigoplus_{i,j=1}^{n}\left( \bigotimes_{h\not\in\{i,j\}}A_hf_h\otimes\frac{A_i}{\langle e_i\rangle}\otimes\frac{A_j}{\langle
e_j\rangle}\otimes T_{\{i,j\}} 
\right)\]
is a tilting object provided the two conditions of the theorem are satisfied.
Condition (\ref{cond1}) is automatically satisfied as all the titling objects are in fact vector bundles.
Furthermore,  Condition (\ref{cond2}) is also satisfied as \[\Ext^1_{L_i}(T_\emptyset \otimes \O_{L_i}, T_{\{i\}})=\left\{  
	\begin{array}[]{ll}
		\Ext^1_{L_i}(\O_{L_i}(-2)\oplus \O_{L_i}(-1)\oplus \O_{L_i}, \O_{L_i}\oplus\O_{L_i}(1))=0,\; i=0,\dots, l\\
		\Ext^1_{L_i}(\O_{L_i}(-4)\oplus \O_{L_i}(-2)\oplus \O_{L_i}, \O_{L_i}\oplus\O_{L_i}(1))=0,\; i=l+1,\dots,n
	\end{array}
\right.\] and so $T$ is indeed a tilting bundle in $\mod A$.
\subsection{Weighted Hirzebruch surfaces}
In this section we following King's conventions from \cite{K}. For $m\ge 0$ the Hirzebruch surface is defined as
\[\Sigma_m=\P(\O_{\P^1}(-m)\oplus \O_{\P^1}).\]In this section $\O=\O_{\Sigma_m}$. It is well know that $\Pic \Sigma_m=\Z^2$ with intersection form
$
\begin{bmatrix}
	0&1\\1&m
\end{bmatrix}
$ and canonical bundle $\O(m-2,-2)$. 
Using the adjunction formula, which states that a smooth genus $g$ curve $C$ on a surface $X$ with
canonical divisor $K$ satisfies \[2g-2=C.(C+K),\] we see that any smooth curve of type $(a, 1)$ or $(1,0)$ is rational
and hence has a tilting bundle.

Similar to the weighted $\P^2$ case, for $i=1,\dots,l$ let $L_i$ be a smooth $(a_i,1)$ divisor and for $i=l+1,\dots,n$ a $(1,0)$ divisor.
As before, we consider the category
\[\A\simeq \mod\bigotimes_{i=1}^{n}T_{p_i}(\O,\O(-L_i)). \]
Let, $T_\emptyset = \O\oplus\O(1,0)\oplus \O(0,1)\oplus\O(1,1)$ which is a tilting bundle in $\coh \Sigma_m$. For $i=1,\dots,l$ let
$T_{\{i\}}=\O_{L_i}(a_i+m)\oplus \O_{L_i}(a_i+m+1)$ and $T_{\{i\}}=\O_{L_i}\oplus\O_{L_i}(1)$ for $i=l+1,\dots,m$. Finally let
$T_{\{i,j\}}=\O_{L_i\cap L_j}$.
Thus, by (\ref{Thm:order})
\[T=\bigotimes_{i=1}^n A_if_i\otimes T_\emptyset\otimes\bigoplus_{i=1}^n\left( \bigotimes_{j\neq i}A_jf_j\otimes \frac{A_i}{\langle
e_i\rangle}\otimes T_{\{i\}}\right)\otimes
\bigoplus_{i,j=1}^{n}\left( \bigotimes_{h\not\in\{i,j\}}A_hf_h\otimes\frac{A_i}{\langle e_i\rangle}\otimes\frac{A_j}{\langle
e_j\rangle}\otimes T_{\{i,j\}} 
\right)\]
is a tilting object provided the two conditions of the theorem are satisfied.
Condition (\ref{cond1}) is automatically satisfied as all the titling objects are in fact vector bundles.
Furthermore,  Condition (\ref{cond2}) is also satisfied as 
\begin{align*}
&\Ext^1_{L_i}(T_\emptyset \otimes \O_{L_i}, T_{\{i\}})\\&=\left\{  
	\begin{array}[]{ll}
		\Ext^1_{L_i}(\O_{L_i}\oplus \O_{L_i}(1)\oplus \O_{L_i}(a_i+m)\oplus\O_{L_i}(a_i+m+1), \O_{L_i}(a_i+m)\oplus\O_{L_i}(a_i+m+1))=0,& i=0,\dots l\\
		\Ext^1_{L_i}(\O\oplus \O_{L_i}\oplus \O_{L_i}(1)\oplus \O_{L_i}(1), \O_{L_i}\oplus\O_{L_i}(1))=0,\quad i=l+1,\dots,n
	\end{array}
\right.
\end{align*}
and so $T$ is indeed a tilting bundle in $\mod A$.
\subsection{Squids}
We have already seen the squid algebra which arose as the endomorphism algebra of tilting objects on weighted projective lines. We now
generalise this to higher dimensional weighted projective spaces.

Let $X=\P^d_{X_0:\dots:X_d}$ and for $i=1,\dots,n$ let $L_i\colon \ell_i(X_0,\dots,X_d)=0$ be hyperplanes in general position. 
For $I\subseteq \{1,\dots,n\}$ with $|I|\le d$ 
\[ T_I=\O_I(|I|)\oplus \O(|I|+1)\oplus \dots\oplus\O_I(d)\] is a tilting bundle in $\coh L_I$ where 
\[L_I=\bigcap_{i\in I}L_i\] Furthermore, as we have seen the category $\mod A$ where \[A=\bigotimes_{i=1}^nT_{p_i}(\O_X,\O_X(-L_i))\] has a
tilting as described in $(\ref{Thm:order})$ since Condition (\ref{cond1}) is trivial as all $T_I$ are bundles and Condition (\ref{cond2}) is
easy to verify with our choice of $T_I$.

We now describe $\End_AT$ where $T$ is given by (\ref{Thm:order}) presenting it as a quiver with relations. 
First, we will describe the vertices, then the arrows and finally the
relations. For simplicity, we will allow non-admissible relations.

Vertices: Let $S$ be as before. For each $\a\in S$, let $I_\a=\left\{ i \;|\;a_i\neq 1\right\}$.
The vertices are labelled 
\[\O_{\a}(|I_\a|+i)\]  with $\a\in S$ and $0\le i\le d-|I_\a|$.

Arrows:
\begin{itemize}
	\item $(d+1)$ arrows labelled $X_{I_\a}^0,\dots,X_{I_\a}^d$ between
		\[\xymatrix{
			\O_{\a}(|I_\a|+i)\ar@<1.5ex>[r]^{X^0_{I_\a}}\ar@<-2.5ex>[r]_{X^d_{I_\a}}\ar@{}@<0.3ex>[r]|{\vdots}&\O_{\a}(|I_\a|+i+1)
		}\] whenever $i\leq d-|I_\a|$.
	\item $1$ arrow labelled $x$ between \[\O_{\a}(|I_\a|+i)\xrightarrow{x}\O_{{\hat \a}}(|I_{\hat\a}|+i)\] where
		$\hat\a=\a+\e_j$ for all $j$ with $1<a_j\le p_j$. Note that in this case $I_\a=I_{\hat\a}$.
	\item $1$ arrow labelled $y_j$ between \[\O_{\a}(|I_\a|+i)\xrightarrow{y_j}\O_{{\hat \a}}(|I_{\hat\a}|+i-1)\] where
		$\hat\a=\a+\e_j$ for all $j$ with $a_j=1$. Note that in this case $|I_\a|=|I_{\hat\a}|-1$.
\end{itemize}
Relations:
\begin{itemize}
	\item Commutativity relations: $X^i_{I_\a}X^j_{I_\a}=X^j_{I_\a}X^i_{I_\a},\;X^i_{I_\a}y_j=y_jX^i_{I_{\hat \a}},\; y_iy_j=y_jy_i$ and $X^i_{I_\a}x=xX^i_{I_\a}$.
	\item $\ell_i(X^0_{I_\a},\dots,X^d_{I_\a})=0$ for all $\a\in S$ and $i\in I_{\a}$.
	\item For all $\a\in S$ with $a_j=1$ and subquivers of the form
	\[\xymatrix{
			\O_{\a}(|I_\a|+i)\ar@<1.5ex>[r]^{X^0_{I_\a}}\ar@<-2.5ex>[r]_{X^d_{I_\a}}\ar@{}@<0.3ex>[r]|{\vdots}&
			\O_{\a}(|I_\a|+i+1)\ar[r]^{y_j}&\O_{{\hat \a}}(|I_{\hat\a}|+i)
		}\] where $\hat \a=\a+\e_j$ with we have $1$ (extra) relation \[\ell_j(X^0_{I_\a},\dots,X^d_{I_\a})y_j=0.\]
\end{itemize}

\begin{example}[On $\P^2$, $2$ weights, both of weight $3$] Consider $\P^2_{X_0:X_1:X_2}$ and hyperplanes $L_i\colon \ell_i(X_0,X_1,X_2)=0$
	for $i=1,2$. Let \[A=
		\begin{bmatrix}
			\O&\O(-L_1)&\O(-L_1)\\\O&\O&\O(-L_1)\\\O&\O&\O
		\end{bmatrix}\otimes
		\begin{bmatrix}
			\O&\O(-L_2)&\O(-L_2)\\\O&\O&\O(-L_2)\\\O&\O&\O
		\end{bmatrix}
		=A_1\otimes A_2
	\]
	Then $T_\emptyset =\O\oplus\O(1)\oplus \O(2)$, $T_{\{i\}}=\O_{L_i}(1)\oplus \O_{L_i}(2)$ and
	$T_{\{1,2\}}=\O_{L_1\cap L_2}$. Then by (\ref{Thm:order})
	\[
	T=	\left(A_1f_1\otimes A_2f_2\otimes
		T_{\emptyset}\right)\oplus
		\left(\frac{A_1}{\langle e_1\rangle}\otimes
		A_2f_2\otimes T_{1}\right)\oplus \left(A_1f_1\otimes
		\frac{A_2}{\langle e_2\rangle}\otimes T_{2}\right)\oplus
		\left(\frac{A_1}{\langle e_1\rangle}\otimes
		\frac{A_2}{\langle e_2\rangle }\otimes
		T_{1,2}\right)
	\] is a tilting object in $\A=\mod A$.
$\End_AT$ is given by the following quiver
\[\xymatrix@R=30pt@C=30pt{
	&&&\O_{(3,1)}(1)\ar@3[r]^{X^0_1, X^1_1, X^2_1}&\O_{(3,1)}(2)\ar[dr]|{y_2}\\
	&&\O_{(2,1)}(1)\ar@3[r]^{X^0_1, X^1_1, X^2_1}\ar[ur]|x&\O_{(2,1)}(2)\ar[dr]|{y_2}\ar[ur]|x&&\O_{(3,2)}\ar[dr]|x\\
	\O_{(1,1)}\ar@3[r]^{X^0_\emptyset, X^1_\emptyset, X^2_\emptyset}& \O_{(1,1)}(1)\ar@3[r]^{X^0_\emptyset, X^1_\emptyset, X^2_\emptyset}\ar[ur]|{y_1}\ar[dr]|{y_2}&
	\O_{(1,1)}(2)\ar[ur]|{y_1}\ar[dr]|{y_2}&&\O_{(2,2)}\ar[ur]|x\ar[dr]|x&&\O_{(3,3)}\\
	&&\O_{(1,2)}(1)\ar@3[r]^{X^0_2, X^1_2, X^2_2}\ar[dr]|x&\O_{(1,2)}(2)\ar[ur]|{y_1}\ar[dr]|x&&\O_{(2,3)}\ar[ur]|x\\
	&&&\O_{(1,3)}(1)\ar@3[r]&\O_{(1,3)}(2)\ar[ur]|{y_1}
}\] with commutativity relations as well as:
\[
	\ell_1(X^0_\emptyset, X^1_\emptyset, X^2_\emptyset)y_1,\;\ell_2(X^0_\emptyset, X^1_\emptyset, X^2_\emptyset)y_2,\;
		\ell_1(X^0_1, X^1_1, X^2_1),\;	\ell_2(X^0_2, X^1_2, X^2_2), \ell_2(X^0_1, X^1_1, X^2_1)y_2, \ell_1(X^0_2, X^1_2, X^2_2)y_1
\]
\end{example}

\end{document}